\definecolor{myurlcolor}{rgb}{0,0,0.5}
\newcommand{\blank}{(\hspace*{1ex})}
\newcommand{\dashbk}{-}
\newcommand{\hyph}{\mbox{-}}
\newcommand{\ucontents}[2]{\addcontentsline{toc}{#1}{\numberline{}{#2}}}
\newcommand{\cat}[1]{\mathscr{#1}}
\newcommand{\fcat}[1]{\mathbf{#1}}
\newcommand{\scat}[1]{\mathbf{#1}}
\newcommand{\such}{:}
\newcommand{\without}{\setminus}
\newcommand{\Hom}{\mathrm{Hom}}
\newcommand{\op}{\mathrm{op}}
\newcommand{\id}{\mathrm{id}}
\newcommand{\Alg}{\fcat{Alg}}
\newcommand{\Set}{\fcat{Set}}
\newcommand{\Sheaf}{\fcat{Sheaf}}
\newcommand{\CHSheaf}{\fcat{CHSheaf}}
\newcommand{\integers}{\mathbb{Z}}
\newcommand{\ftrcat}[2]{[#1,#2]}
\newcommand{\CAT}{\fcat{CAT}}
\newcommand{\Mod}{\mathbf{Mod}}
\newcommand{\reals}{\mathbb{R}}
\newcommand{\rationals}{\mathbb{Q}}
\newcommand{\latin}[1]{#1}	
\newcommand{\swnt}{\rotatebox{45}{$\Leftarrow$}\!}
\newcommand{\demph}[1]{\textbf{\textup{#1}}}
\newcommand{\done}{\hfill\ensuremath{\Box}}
\newenvironment{prooflike}[1]{\begin{trivlist}\item\textbf{#1}\ }
{\end{trivlist}}
\newenvironment{proof}{\begin{prooflike}{Proof}}{\end{prooflike}}
\newcommand{\End}{\fcat{End}}
\newcommand{\iso}{\cong}
\newcommand{\nat}{\mathbb{N}}	
\newcommand{\eqv}{\simeq}
\newcommand{\pr}{\mathrm{pr}}
\newcommand{\copr}{\mathrm{copr}}
\newcounter{bean}
\newcommand{\of}{\,\raisebox{0.08ex}{\ensuremath{\scriptstyle\circ}}\,}
\newcommand{\sub}{\subseteq}
\newcommand{\One}{\mathbf{1}}
\renewcommand{\to}{\longrightarrow}
\newcommand{\toby}[1]{\stackrel{#1}{\to}}
\newcommand{\Open}{\fcat{O}}
\newcommand{\FinSet}{\fcat{FinSet}}
\newcommand{\Top}{\fcat{Top}}
\newcommand{\Vect}{\fcat{Vect}}
\newcommand{\LCVect}{\fcat{LCVect}}
\newcommand{\dee}{\,d}
\newcommand{\incl}{\hookrightarrow}
\newcommand{\epic}{\twoheadrightarrow}
\renewcommand{\implies}{\Rightarrow}
\newcommand{\uf}[1]{\mathscr{#1}}
\DeclareMathOperator{\Simp}{Simp}
\DeclareMathOperator{\im}{im}
\newenvironment{ufchar}{\begin{center}\it}{\end{center}}
\newcommand{\from}{\colon}
\newcommand{\mnd}[1]{\mathbf{#1}}
\newcommand{\Mnd}{\fcat{Mnd}}
\newcommand{\Grp}{\fcat{Grp}}
\newcommand{\Ring}{\fcat{Ring}}
\newcommand{\FDVect}{\fcat{FDVect}}
\newcommand{\Field}{\fcat{Field}}
\newcommand{\CptHff}{\fcat{CptHff}}
\newcommand{\undvect}[1]{|#1|}
\newcommand{\fmc}[1]{\cat{K}(#1)}
\DeclareMathOperator{\Frac}{Frac}
\DeclareMathOperator{\Spec}{Spec}
\DeclareMathOperator{\coker}{coker}
\newcommand{\Fam}{\fcat{Fam}}
\newcommand{\FinFam}{\fcat{FinFam}}
\newcommand{\cm}[1]{(\CAT/{#1})_\textup{CM}}
\newcommand{\monadic}[1]{(\CAT/{#1})_\textup{mndc}}
\newcommand{\bl}{{\scriptscriptstyle\bullet}}
\DeclareMathOperator{\Eq}{Eq}
\newcommand{\ideal}[1]{\mathfrak{#1}}
\newcommand{\idashbk}{\!-}
\newcommand{\textiff}{\Leftrightarrow}
\newcommand{\ulp}[1]{\prod\nolimits_{#1}} 
\newcommand{\aulp}[1]{\bigwedge\nolimits_{#1}} 
\newcommand{\END}{\mathbf{\underline{End}}}
\newcommand{\lt}[1]{\lim_{\substack{\longleftarrow\\ #1}}}
\newcommand{\colt}[1]{\lim_{\substack{\longrightarrow\\ #1}}}
\newcommand{\elt}[1]{\scat{E}(#1)}
\newenvironment{centerdiag}{%
\begin{array}{c}%
\hspace*{-.5em}}%
{\hspace*{-.5em}%
\end{array}}
\newenvironment{mapdefn}{%
\begin{array}{ccc}}%
{\end{array}%
}
\newcommand{\slob}[3]{%
\hspace{-.5em}%
\begin{array}{c}
\xymatrix{#1 \ar[d]^{#3}\\ #2}
\end{array}%
\hspace{-.5em}}
\newtheorem{thm}{Theorem}[section]
\newtheorem{propn}[thm]{Proposition}
\newtheorem{lemma}[thm]{Lemma}
\newtheorem{cor}[thm]{Corollary}
\newtheorem{predefn}[thm]{Definition}
\newenvironment{defn}{\begin{predefn}\upshape}{\end{predefn}}
\newtheorem{preexample}[thm]{Example}
\newenvironment{example}{\begin{preexample}\upshape}{\end{preexample}}
\newtheorem{preexamples}[thm]{Examples}
\newenvironment{examples}{\begin{preexamples}\upshape}{\end{preexamples}}
\newtheorem{preremark}[thm]{Remark}
\newenvironment{remark}{\begin{preremark}\upshape}{\end{preremark}}
\author{Tom Leinster%
\thanks{School of Mathematics, University of Edinburgh, Edinburgh
EH9 3JZ, UK; Tom.Leinster@ed.ac.uk.  
Partially supported by an EPSRC Advanced Research Fellowship.}}
\title{Codensity and the ultrafilter monad}
\date{\vspace*{-2ex}}
\begin{document}

\sloppy
\maketitle

\begin{abstract}
Even a functor without an adjoint induces a monad, namely, its codensity
monad; this is subject only to the existence of certain limits.  We clarify
the sense in which codensity monads act as substitutes for monads induced
by adjunctions.  We also expand on an undeservedly ignored theorem of
Kennison and Gildenhuys: that the codensity monad of the inclusion of
(finite sets) into (sets) is the ultrafilter monad.  This result is
analogous to the correspondence between measures and integrals.  So, for
example, we can speak of integration against an ultrafilter.  Using this
language, we show that the codensity monad of the inclusion of
(finite-dimensional vector spaces) into (vector spaces) is double
dualization.  From this it follows that compact Hausdorff spaces have a
linear analogue: linearly compact vector spaces.  Finally, we show that
ultra\emph{products} are categorically inevitable: the codensity monad of
the inclusion of (finite families of sets) into (families of sets) is the
ultraproduct monad.
\end{abstract}

\tableofcontents\

\begin{quote}
\emph{Now we have at last obtained permission to ventilate the facts\ldots}
\end{quote}
---Arthur Conan Doyle, The Adventure of the Creeping Man (1927)

\section*{Introduction}
\ucontents{section}{Introduction}

The codensity monad of a functor $G$ can be thought of as the monad induced
by $G$ and its left adjoint, even when no such adjoint exists.  We
explore the remarkable fact that when $G$ is the inclusion of the category
of finite sets into the category of all sets, the codensity monad of $G$ is
the ultrafilter monad.  Thus, the mere notion of finiteness of a set
gives rise automatically to the notion of ultrafilter, and so in turn to
the notion of compact Hausdorff space.

Many of the results in this paper are known, but not well known.  In
particular, the characterization of the ultrafilter monad as a codensity
monad appeared in the 1971 paper of Kennison and Gildenhuys~\cite{KeGi} and
the 1976 book of Manes (\cite{ManeAT}, Exercise~3.2.12(e)), but has not,
to my knowledge, appeared anywhere else.  Part of the purpose of this paper
is simply to ventilate the facts.

Ultrafilters belong to the minimalist world of set theory.  There are
several concepts in more structured branches of mathematics of which
ultrafilters are the set-theoretic shadow:
\begin{description}
\item[Probability measures]  An ultrafilter is a finitely additive
  probability measure in which every event has probability either $0$ or
  $1$ (Lemma~\ref{lemma:uf-prob}).  The elements of an ultrafilter on a set
  $X$ are the subsets that occupy `almost all' of $X$, and the other
  subsets of $X$ are to be regarded as `null', in the sense of measure
  theory. 

\item[Integration operators]  Ordinary real-valued integration on a measure
  space $(X, \mu)$ is an operation that takes as input a suitable function
  $f\from X \to \reals$ and produces as output an element $\int_X f
  \dee\mu$ of $\reals$.  We can integrate against ultrafilters, too.  Given
  an ultrafilter $\uf{U}$ on a set $X$, a set $R$, and a function
  $f\from X \to R$ with finite image, we obtain an element $\int_X f
  \dee\uf{U}$ of $R$; it is the unique element of $R$ whose $f$-fibre
  belongs to $\uf{U}$.

\item[Averages] To integrate a function against a \emph{probability}
  measure is to take its mean value with respect to that measure.
  Integrating against an ultrafilter $\uf{U}$ is more like taking the mode:
  if we think of elements of $\uf{U}$ as `large' then $\int_X f \dee\uf{U}$
  is the unique value of $f$ taken by a large number of elements of $X$.
  Ultrafilters are also used to prove results about more sophisticated
  types of average.  For example, a \demph{mean} on a group $G$ is a left
  invariant finitely additive probability measure defined on all subsets of
  $G$; a group is \demph{amenable} if it admits at least one mean.  Even to
  prove the amenability of $\integers$ is nontrivial, and is usually done by
  choosing a nonprincipal ultrafilter on $\nat$ (e.g.~\cite{Rund},
  Exercise~1.1.2). 

\item[Voting systems] In an election, each member of a set $X$ of voters
  chooses one element of a set $R$ of options.  A voting system computes
  from this a single element of $R$, intended to be some kind of average of
  the individual choices.  In the celebrated theorem of Arrow~\cite{Arro},
  $R$ has extra structure: it is the set of total orders on a list of
  candidates.  In our structureless context, ultrafilters can be seen as
  (unfair!)\ voting systems: when each member of a possibly-infinite set
  $X$ of voters chooses from a finite set $R$ of options, there
  is---according to any ultrafilter on $X$---a single option chosen by
  almost all voters, and that is the outcome of the election.
\end{description}

Section~\ref{sec:ufs} is a short introduction to ultrafilters.  It includes
a very simple and little-known characterization of ultrafilters, as
follows.  A standard lemma states that if $\uf{U}$ is an ultrafilter on a
set $X$, then whenever $X$ is partitioned into a finite number of (possibly
empty) subsets, exactly one belongs to $\uf{U}$.  But the converse is also
true~\cite{GaHo}: any set $\uf{U}$ of subsets of $X$ satisfying this
condition is an ultrafilter.  Indeed, it suffices to require this just for
partitions into three subsets.

We also review two characterizations of monads: one of
B\"orger~\cite{BoerCU}: 
\begin{ufchar}
  the ultrafilter monad is the terminal monad on $\Set$ that preserves
  finite coproducts
\end{ufchar}
and one of Manes~\cite{ManeTTC}:
\begin{ufchar}
  the ultrafilter monad is the monad for compact Hausdorff spaces.
\end{ufchar}

Density and codensity are reviewed in Section~\ref{sec:codensity}.  A
functor $G\from \cat{B} \to \cat{A}$ is either codense or not: yes or no.
Finer-grained information can be obtained by calculating the codensity
\emph{monad} of $G$.  This is a monad on $\cat{A}$, defined subject only to
the existence of certain limits, and it is the identity exactly when $G$ is
codense.  Thus, the codensity monad of a functor measures its failure to be
codense.

This prepares us for the codensity theorem of Kennison and Gildenhuys
(Section~\ref{sec:uf-co}): writing $\FinSet$ for the category of finite
sets,
\begin{ufchar}
  the ultrafilter monad is the codensity monad of the inclusion $\FinSet
  \incl \Set$.
\end{ufchar}
(In particular, since nontrivial ultrafilters exist, $\FinSet$ is not
codense in $\Set$.)  We actually prove a more general theorem, which has as
corollaries both this and an unpublished result of Lawvere.

Writing $\mnd{T} = (T, \eta, \mu)$ for the codensity monad of $\FinSet
\incl \Set$, the elements of $T(X)$ can be thought of as integration
operators on $X$, while the ultrafilters on $X$ are thought of as measures
on $X$.  The theorem of Kennison and Gildenhuys states that integration
operators correspond one-to-one with measures, as in analysis.  In general,
the notions of integration and codensity monad are bound together tightly.
This is one of our major themes.

Integration is most familiar when the integrands take values in some kind
of algebraic structure, such as $\reals$.  In Section~\ref{sec:rigs}, we
describe integration against an ultrafilter for functions taking values in
a rig (semiring).  We prove that when the rig $R$ is sufficiently
nontrivial, ultrafilters on $X$ correspond one-to-one with integration
operators for $R$-valued functions on $X$.

To continue, we need to review some further basic results on
codensity monads, including their construction as Kan extensions
(Section~\ref{sec:kan}).  This leads to another characterization: 
\begin{ufchar}
  the ultrafilter monad is the terminal monad on $\Set$ that restricts to
  the identity on $\FinSet$.
\end{ufchar}
In Section~\ref{sec:co-adj}, we justify the opening assertion of this
introduction: that the codensity monad of a functor $G$ is a surrogate for
the monad induced by $G$ and its left adjoint (which might not exist).  For
a start, if a left adjoint exists then the two monads are the same.  More
subtly, any monad on $\cat{A}$ induces a functor into $\cat{A}$ (the
forgetful functor on its category of algebras), and, under a completeness
hypothesis, any functor into $\cat{A}$ induces a monad on $\cat{A}$ (its
codensity monad).  Theorem~\ref{thm:co-adj}, due to Dubuc~\cite{Dubu},
states that the two processes are adjoint.  From this we deduce:
\begin{ufchar}
$\CptHff$ is the codomain of the universal functor from $\FinSet$ to\\
a category monadic over $\Set$.
\end{ufchar}
(This phrasing is slightly loose; see Corollary~\ref{cor:CptHff} for the
precise statement.)  Here $\CptHff$ is the category of compact Hausdorff
spaces.

We have seen that when standard categorical constructions are applied to
the inclusions $\FinSet \incl \Set$, we obtain the notions of ultrafilter
and compact Hausdorff space.  In Section~\ref{sec:dd} we ask what happens
when sets are replaced by vector spaces.  The answers give us the following
table of analogues:
\begin{center}
\begin{tabular}{ll}
sets                            &vector spaces  \\
finite sets                     &finite-dimensional vector spaces       \\
ultrafilters                    &elements of the double dual    \\
compact Hausdorff spaces        &linearly compact vector spaces.
\end{tabular}
\end{center}
The main results here are that the codensity monad of $\FDVect \incl \Vect$
is double dualization, and that its algebras are the linearly compact
vector spaces (defined below).  The close resemblance between
the $\Set$ and $\Vect$ cases raises the question: can analogous results be
proved for other algebraic theories?  We leave this open.  

It has long been a challenge to synthesize the complementary insights
offered by category theory and model theory.  For example, model theory
allows insights into parts of algebraic geometry where present-day category
theory seems to offer little.  (This is especially so when it comes to
transferring results between fields of positive characteristic and
characteristic zero, as exemplified by Ax's model-theoretic proof that
every injective endomorphism of a complex algebraic variety is
surjective~\cite{Ax}.)  A small part of this challenge is to find a
categorical home for the ultraproduct construction.

Section~\ref{sec:ups} does this.  The theorem of Kennison and Gildenhuys
shows that the notion of finiteness of a set leads inevitably to the notion
of ultrafilter.  Similarly, we show here that the notion of finiteness of a
\emph{family} of sets leads inevitably to the notion of ultraproduct.  More
specifically, we define a category of families of sets, and prove that the
codensity monad of the full subcategory of finitely-indexed families is the
ultraproduct monad.  This theorem (with a different proof) was transmitted
to me by the anonymous referee, to whom I am very grateful.

\paragraph*{History and related work}  The concept of density was first
isolated in a 1960 paper by Isbell~\cite{IsbeAS}, who gave a definition of
dense (or in his terminology, left adequate) full subcategory.  Ulmer
generalized the definition to arbitrary functors, not just inclusions of
full subcategories, and introduced the word `dense'~\cite{Ulme}.  At about
the same time, the codensity monad of a functor was defined by
Kock~\cite{KockCYR} (who gave it its name) and, independently, by Appelgate
and Tierney~\cite{ApTi} (who concentrated on the dual notion, calling it
the model-induced cotriple).

Other early sources on codensity monads are the papers of
Linton~\cite{LintOFS} and Dubuc~\cite{Dubu}.  (Co)density of functors is
covered in Chapter~X of Mac Lane's book~\cite{MacLCWM}, with codensity
monads appearing in the very last exercise.  Kelly's book~\cite{KellBCE}
treats (co)dense functors in detail, but omits (co)density (co)monads.

The codensity characterization of the ultrafilter monad seems to have first
appeared in the paper~\cite{KeGi} of Kennison and Gildenhuys, and is also
included as Exercise~3.2.12(e) of Manes's book~\cite{ManeAT}.  (Manes used
the term `algebraic completion' for codensity monad.)  It is curious that
no result resembling this appears in Isbell's 1960 paper, as even though he
did not have the notion of codensity monad available, he performed similar
and more set-theoretically sophisticated calculations.  However, his paper
does not mention ultrafilters.  On the other hand, a 2010 paper of Litt,
Abel and Kominers~\cite{LAK} proves a result equivalent to a weak form of
Kennison and Gildenhuys's theorem, but does not mention codensity.

The integral notation that we use so heavily has been used in similar ways
by Kock~\cite{KockCEQ,KockMEQ} and Lucyshyn-Wright~\cite{Lucy} (and
slightly differently by Lawvere and Rosebrugh in Chapter~8 of~\cite{LaRo}).
In~\cite{KockMEQ}, Kock traces the idea back to work of Linton and Wraith.

Richter~\cite{Rich} found a different proof of
Theorem~\ref{thm:Boerger-endo} below, originally due to B\"orger.
Section~3 of Kennison and Gildenhuys~\cite{KeGi} may provide some help in
answering the question posed at the end of Section~\ref{sec:dd}.

\paragraph*{Notation} We fix a category $\Set$ of sets
satisfying the axiom of choice.  $\Top$ is the category of all topological
spaces and continuous maps, and $\CAT$ is the category of locally small
categories.  When $X$ is a set and $Y$ is an object of some category, $[X,
  Y]$ denotes the $X$-power of $Y$, that is, the product of $X$ copies of
$Y$.  In particular, when $X$ and $Y$ are sets, $[X, Y]$ is the set $Y^X =
\Set(X, Y)$ of maps from $X$ to $Y$.  For categories $\cat{A}$ and
$\cat{B}$, we write $\ftrcat{\cat{A}}{\cat{B}}$ for the category of
functors from $\cat{A}$ to $\cat{B}$.  Where necessary, we silently assume
that our general categories $\cat{A}, \cat{B}, \ldots$ are locally small.

\section{Ultrafilters}
\label{sec:ufs}

We begin with the standard definitions.  Write $P(X)$ for the power set of a
set $X$.

\begin{defn}
Let $X$ be a set.  A \demph{filter} on $X$ is a subset $\uf{F}$ of $P(X)$ such
that:
\begin{enumerate}
\item $\uf{F}$ is upwards closed: if $Z \sub Y \sub X$ with $Z \in \uf{F}$
then $Y \in \uf{F}$
\item $\uf{F}$ is closed under finite intersections: $X \in \uf{F}$, and if
$Y, Z \in \uf{F}$ then $Y \cap Z \in \uf{F}$.
\end{enumerate}
\end{defn}

Filters on $X$ amount to meet-semilattice homomorphisms from $P(X)$ to
the two-element totally ordered set $2 = \{ 0 < 1 \}$, with
$f\from P(X) \to 2$ corresponding to the filter $f^{-1}(1) \sub X$.

It is helpful to view the elements of a filter as the `large' subsets of $X$,
and their complements as `small'.  Thus, the union of a finite number of small
sets is small.  An ultrafilter is a filter in which every subset is either
large or small, but not both.

\begin{defn}
Let $X$ be a set.  An \demph{ultrafilter} on $X$ is a filter $\uf{U}$ such
that for all $Y \sub X$, either $Y \in \uf{U}$ or $X \without Y \in \uf{U}$,
but not both.
\end{defn}

Ultrafilters on $X$ correspond to lattice homomorphisms $P(X) \to 2$. 

\begin{example}
Let $X$ be a set and $x \in X$.  The \demph{principal ultrafilter} on $x$
is the ultrafilter $\uf{U}_x = \{ Y \sub X \such x \in Y \}$.  Every
ultrafilter on a finite set is principal.
\end{example}

The set of filters on $X$ is ordered by inclusion.  The largest filter is
$P(X)$; every other filter is called \demph{proper}.  (What we call proper
filters are often just called filters.)  A standard lemma (Proposition~1.1
of~\cite{Eklo}) states that the ultrafilters are precisely the maximal
proper filters.  Zorn's lemma then implies that every proper filter is
contained in some ultrafilter.  No explicit example of a nonprincipal
ultrafilter can be given, since their existence implies a weak form of the
axiom of choice.  However:

\begin{example}
Let $X$ be an infinite set.  The subsets of $X$ with finite complement form
a proper filter $\uf{F}$ on $X$.  Then $\uf{F}$ is contained in some
ultrafilter, which cannot be principal.  Thus, every infinite set
admits at least one nonprincipal ultrafilter.
\end{example}

We will use the following simple characterization of ultrafilters.  The
equivalence of~(\ref{part:uf-concise-uf}) and~(\ref{part:uf-concise-n})
appears to be due to Galvin and Horn~\cite{GaHo}, whose result nearly
implies the equivalence with~(\ref{part:uf-concise-single}), too. 

\begin{propn}[Galvin and Horn]   \label{propn:uf-concise}
Let $X$ be a set and $\uf{U} \sub P(X)$.  The following are equivalent: 
\begin{enumerate}
\item   \label{part:uf-concise-uf}
$\uf{U}$ is an ultrafilter

\item   \label{part:uf-concise-n}
$\uf{U}$ satisfies the \demph{partition condition:} for all $n \geq 0$
and partitions 
\[
X = Y_1 \amalg \cdots \amalg Y_n
\]
of $X$ into $n$ pairwise disjoint (possibly empty) subsets, there
is exactly one $i \in \{1, \ldots, n\}$ such that $Y_i \in \uf{U}$.
\setcounter{bean}{\value{enumi}}
\end{enumerate}
Moreover, for any $N \geq 3$, these conditions are equivalent to:
\begin{enumerate}
\setcounter{enumi}{\value{bean}}
\item   \label{part:uf-concise-single}
$\uf{U}$ satisfies the partition condition for $n = N$.
\end{enumerate}
\end{propn}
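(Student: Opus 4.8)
The plan is to establish the cycle of implications (\ref{part:uf-concise-uf})$\implies$(\ref{part:uf-concise-n})$\implies$(\ref{part:uf-concise-single})$\implies$(\ref{part:uf-concise-uf}), since (\ref{part:uf-concise-n})$\implies$(\ref{part:uf-concise-single}) is trivial once we have the first implication available, and proving (\ref{part:uf-concise-single})$\implies$(\ref{part:uf-concise-uf}) for an arbitrary fixed $N\geq 3$ simultaneously handles the ``moreover'' clause. So really only two implications need work.

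First I would prove (\ref{part:uf-concise-uf})$\implies$(\ref{part:uf-concise-n}). Given an ultrafilter $\uf{U}$ and a partition $X = Y_1 \amalg \cdots \amalg Y_n$, uniqueness is immediate: if $Y_i, Y_j \in \uf{U}$ with $i \neq j$ then $Y_i \cap Y_j = \emptyset \in \uf{U}$, contradicting that $\uf{U}$ is proper (or directly: $\emptyset \in \uf{U}$ forces $X \without \emptyset = X \notin \uf{U}$, impossible). For existence, induct on $n$. The cases $n = 0$ (so $X = \emptyset$, and $\uf{U} \sub P(\emptyset)$ an ultrafilter forces $\emptyset \in \uf{U}$ — but wait, this conflicts with properness, so in fact there is no ultrafilter on $\emptyset$ and the statement is vacuous) and $n = 1$ (where $Y_1 = X \in \uf{U}$) are handled directly. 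For $n \geq 2$, apply the ultrafilter dichotomy to $Y_n$: either $Y_n \in \uf{U}$, and we are done, or $X \without Y_n = Y_1 \amalg \cdots \amalg Y_{n-1} \in \uf{U}$; in the latter case one checks that $\uf{U}' = \{Z \sub X \without Y_n : Z \in \uf{U}\}$ is an ultrafilter on $X \without Y_n$ (upward closure within $X\without Y_n$, closure under intersection, and the dichotomy all transfer because $X \without Y_n \in \uf{U}$), and the inductive hypothesis supplies the desired $Y_i$.

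The substantive step, and the one I expect to be the main obstacle, is (\ref{part:uf-concise-single})$\implies$(\ref{part:uf-concise-uf}) for fixed $N \geq 3$. Here we are handed only the hypothesis that every $N$-part partition of $X$ meets $\uf{U}$ in exactly one block, and we must reconstruct all the ultrafilter axioms. The idea is that partitions of the required size can be manufactured by padding with empty blocks: given any $Y \sub X$, consider the $N$-part partition $(Y,\, X\without Y,\, \emptyset,\, \ldots,\, \emptyset)$. Exactly one block lies in $\uf{U}$. An empty block cannot lie in $\uf{U}$ — otherwise the partition $(\emptyset, X, \emptyset, \ldots, \emptyset)$ together with $(X, \emptyset, \ldots, \emptyset)$ would give two blocks in $\uf{U}$ in the first and one in the second, forcing $\emptyset \notin \uf{U}$ by comparing; more carefully, if $\emptyset \in \uf{U}$ then \emph{every} $N$-part partition containing an empty block has that block in $\uf{U}$, but also for a partition with two disjoint empty blocks both would lie in $\uf{U}$, contradicting uniqueness as soon as $N \geq 2$. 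Hence $\emptyset \notin \uf{U}$, and so exactly one of $Y$, $X \without Y$ lies in $\uf{U}$: that is the ultrafilter dichotomy (which also gives $X \in \uf{U}$, taking $Y = X$). It remains to derive upward closure and closure under binary intersection. For upward closure, suppose $Z \sub Y \sub X$ with $Z \in \uf{U}$; use the $N$-part partition $(Z,\, Y \without Z,\, X \without Y,\, \emptyset, \ldots, \emptyset)$ — this needs $N \geq 3$, which is exactly where the hypothesis $N\geq 3$ is used — and conclude that $Y \notin \uf{U}$ is impossible, since then neither $Y\without Z$ nor $X\without Y$ can be in $\uf U$ either (they are disjoint from $Z\in\uf U$, and two blocks of a partition cannot both lie in $\uf U$), yet \emph{some} block must; so the only escape is $Y \in \uf{U}$. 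Finally, closure under intersection follows from the dichotomy plus upward closure by De Morgan: if $Y, Z \in \uf{U}$ but $Y \cap Z \notin \uf{U}$, then $X \without (Y \cap Z) = (X\without Y) \cup (X \without Z) \in \uf{U}$ by the dichotomy, while $X \without Y \notin \uf{U}$ and $X \without Z \notin \uf{U}$ by the dichotomy applied to $Y$ and $Z$; since $(X\without Y)\cup(X\without Z)\in\uf U$ I would use the partition $\bigl((X\without Y)\without Z,\ (X\without Z)\without Y,\ Y\cap Z,\ \emptyset,\ldots\bigr)$ — again three nonempty blocks partitioning $X$ — to force one of $(X\without Y)$'s or $(X\without Z)$'s sub-blocks, or $Y\cap Z$ itself, into $\uf U$, and then upward closure pushes that up to a contradiction. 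The recurring mechanism throughout is: two disjoint sets cannot both lie in $\uf{U}$ (from uniqueness), something disjoint-decomposing $X$ always puts a block in $\uf{U}$ (from existence), and upward closure lets us promote blocks — the constraint $N \geq 3$ being precisely what guarantees enough room (one or two ``content'' sets plus their joint complement, padded out) to run these arguments.
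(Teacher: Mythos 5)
Your overall strategy is the paper's: pad with empty blocks to get $\emptyset \notin \uf{U}$ and $X \in \uf{U}$, deduce the two-set dichotomy, then use genuine three-block partitions for upwards closure and binary intersection. But the closure-under-intersection step as written fails on a concrete point: the sets $(X\without Y)\without Z$, $(X\without Z)\without Y$, $Y\cap Z$ do not form a partition of $X$. Indeed $(X\without Y)\without Z = (X\without Z)\without Y = X\without(Y\cup Z)$, so your first two blocks are the same set, and their union with $Y\cap Z$ misses the symmetric difference $(Y\without Z)\cup(Z\without Y)$; so the partition condition cannot be applied to this family, and since $N=3$ must be covered you cannot just append the missing pieces as further blocks. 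The repair is small and lands on the paper's argument: use $X=(Y\cap Z)\amalg(Y\without Z)\amalg(X\without Y)$, padded with empty blocks. Exactly one block lies in $\uf{U}$; it is not $X\without Y$ (dichotomy, as $Y\in\uf{U}$), and it is not $Y\without Z$ (your own mechanism: $Y\without Z\sub X\without Z$, so upwards closure would force $X\without Z\in\uf{U}$, contradicting $Z\in\uf{U}$ --- the paper instead rules these out by first observing that two members of $\uf{U}$ must meet). Hence $Y\cap Z\in\uf{U}$.

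A smaller slip: in the upwards-closure step, the contradiction you announce (``yet some block must'') never materializes, because $Z$ itself is the block lying in $\uf{U}$; the hypothesis $Y\notin\uf{U}$ is not actually used. What the partition $(Z,\,Y\without Z,\,X\without Y,\,\emptyset,\ldots)$ really gives is $X\without Y\notin\uf{U}$ (uniqueness, since $Z\in\uf{U}$), and then the already-established dichotomy yields $Y\in\uf{U}$ directly --- which is the paper's argument and is implicit in your ``only escape'' remark. The remaining parts (the padding trick, the dichotomy, and your inductive proof that an ultrafilter satisfies the partition condition, which the paper simply calls standard) are correct and match the paper's route.
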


\begin{proof}
Let $N \geq 3$.  The implication
(\ref{part:uf-concise-uf})$\implies$(\ref{part:uf-concise-n}) is standard, 
and (\ref{part:uf-concise-n})$\implies$(\ref{part:uf-concise-single}) is
trivial.  Now assume~(\ref{part:uf-concise-single}); we
prove~(\ref{part:uf-concise-uf}).

From the partition $X = X \amalg \underbrace{\emptyset \amalg \cdots \amalg
\emptyset}_{N - 1}$ and the fact that $N \geq 3$, we deduce that
$\emptyset \not\in \uf{U}$ and $X \in \uf{U}$.  It follows that $\uf{U}$
satisfies the partition condition for all $n \leq N$.  Taking $n = 2$, this
implies that for all $Y \sub X$, either $Y \in \uf{U}$ or $X\without Y \in
\uf{U}$, but not both.  It remains to prove that $\uf{U}$ is upwards closed
and closed under binary intersections.

For upwards closure, let $Z \sub Y \sub X$ with $Z \in \uf{U}$.  We have
\[
X = Z \amalg (Y \without Z) \amalg (X \without Y)
\]
with $Z \in \uf{U}$, so $X \without Y \not\in \uf{U}$.  Hence $Y \in \uf{U}$.

To prove closure under binary intersections, first note that if $Y_1, Y_2
\in \uf{U}$ then $Y_1 \cap Y_2 \neq \emptyset$: for if $Y_1 \cap Y_2 =
\emptyset$ then $Y_1 \sub X \without Y_2$, so $X \without Y_2 \in \uf{U}$
by upwards closure, so $Y_2 \not\in \uf{U}$, a contradiction.  Now let $Y,
Z \in \uf{U}$ and consider the partition
\[
X = (Y \cap Z) \amalg (Y \without Z) \amalg (X \without Y).
\]
Exactly one of these three subsets, say $S$, is in $\uf{U}$.  But $S, Y \in
\uf{U}$, so $S \cap Y \neq \emptyset$, so $S \neq X \without Y$; similarly,
$S \neq Y \without Z$.
Hence $S = Y \cap Z$, as required.
\done
\end{proof}

Perhaps the most striking part of this result is:
\begin{cor}     \label{cor:uf-3}
Let $X$ be a set and $\uf{U}$ a set of subsets of $X$ such that whenever
$X$ is expressed as a disjoint union of three subsets, exactly one belongs
to $\uf{U}$.  Then $\uf{U}$ is an ultrafilter.
\done
\end{cor}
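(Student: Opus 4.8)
The plan is to observe that this corollary is nothing more than the case $N = 3$ of Proposition~\ref{propn:uf-concise}. The hypothesis imposed on $\uf{U}$ says precisely that for every partition $X = Y_1 \amalg Y_2 \amalg Y_3$ of $X$ into three pairwise disjoint (possibly empty) subsets, exactly one of $Y_1, Y_2, Y_3$ lies in $\uf{U}$; that is condition~(\ref{part:uf-concise-single}) with $N = 3$. Since $3 \geq 3$, the proposition already tells us this is equivalent to~(\ref{part:uf-concise-uf}), so $\uf{U}$ is an ultrafilter, and there is nothing further to prove.

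Were one to spell out a direct argument instead, it would simply extract the implication~(\ref{part:uf-concise-single})$\implies$(\ref{part:uf-concise-uf}) from the proof of the proposition. First I would feed in the partition $X = X \amalg \emptyset \amalg \emptyset$: having three blocks available --- one genuine and two empty --- forces $X \in \uf{U}$ and $\emptyset \not\in \uf{U}$, and this is exactly the point at which having three subsets rather than two is used. Then padding an arbitrary two-block partition $X = Y \amalg (X \without Y)$ with an empty third block gives the defining ultrafilter dichotomy: for each $Y \sub X$, exactly one of $Y$ and $X \without Y$ belongs to $\uf{U}$. Upward closure follows by applying the three-block hypothesis to $X = Z \amalg (Y \without Z) \amalg (X \without Y)$ when $Z \sub Y$ and $Z \in \uf{U}$, and closure under binary intersection follows from $X = (Y \cap Z) \amalg (Y \without Z) \amalg (X \without Y)$ together with the elementary remark that two members of $\uf{U}$ cannot be disjoint (a disjoint pair would contradict the dichotomy just established).

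There is no real obstacle here; all the content sits in Proposition~\ref{propn:uf-concise}. The only point requiring care is purely formal: each partition invoked must be presented as one with exactly three blocks, empty blocks allowed, so that the hypothesis applies on the nose.
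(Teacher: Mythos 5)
Your proposal is correct and matches the paper exactly: the corollary is stated with no separate argument because it is precisely the case $N = 3$ of Proposition~\ref{propn:uf-concise}, implication (\ref{part:uf-concise-single})$\implies$(\ref{part:uf-concise-uf}). Your sketched direct argument is also just a restatement of the proof of that proposition, so there is nothing to add.
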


The number three cannot be lowered to two: consider a three-element set $X$
and the set $\uf{U}$ of subsets with at least two elements.

Given a map of sets $f\from X \to X'$ and a filter $\uf{F}$ on $X$, there
is an induced filter
\[
f_* \uf{F}
=
\{ Y' \sub X' \such f^{-1}Y' \in \uf{F} \}
\]
on $X'$.  If $\uf{F}$ is an ultrafilter then so is $f_* \uf{F}$.  This
defines a functor
\[
U\from \Set \to \Set
\]
in which $U(X)$ is the set of ultrafilters on $X$.

In fact, $U$ carries the structure of a monad, $\mnd{U}$.  The unit map
$X \to U(X)$ sends $x \in X$ to the principal ultrafilter
$\uf{U}_x$.  We will avoid writing down the multiplication explicitly.
(The contravariant power set functor $P$ from $\Set$ to $\Set$ is
self-adjoint on the right, and therefore induces a monad $PP$ on $\Set$; it
contains $\mnd{U}$ as a submonad.)  What excuses us from this duty is the
following powerful pair of results, both due to B\"orger~\cite{BoerCU}.

\begin{thm}[B\"orger]   \label{thm:Boerger-endo}
The ultrafilter endofunctor $U$ is terminal among all endofunctors of $\Set$
that preserve finite coproducts.  
\end{thm}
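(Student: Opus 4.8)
The plan is to analyze an arbitrary finite-coproduct-preserving endofunctor $H$ of $\Set$ and exhibit a unique natural transformation $H \to U$. The key observation is that every set $X$ is the coproduct of its one-element subsets, $X = \coprod_{x \in X} 1$, and also that $H$ applied to a coproduct of copies of $1$ is the corresponding coproduct of copies of $H(1)$. So first I would pin down $H(1)$. Since $1$ is both initial-free and terminal, but more to the point, the empty set $\emptyset$ is the empty coproduct, so $H(\emptyset) = \emptyset$; applying $H$ to the unique map $\emptyset \to 1$ gives nothing, but applying $H$ to the two maps $1 \to 1 \amalg 1$ and using coproduct-preservation tells us that $H(1)$ injects, via each coprojection, into $H(1) \amalg H(1)$ compatibly, which forces a certain rigidity. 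In fact the cleanest route is: for any finite set $\{1, \ldots, n\}$, the $n$ coprojections $1 \to n$ exhibit $H(n) = H(1) \amalg \cdots \amalg H(1)$ ($n$ copies). An element of $H(1)$ is therefore "tagged" by which copy it lies in after any such decomposition, and chasing the functoriality on maps between finite sets shows $H(1)$ must be a singleton — intuitively, if $H(1)$ had two distinct elements $a \neq b$, consider the fold map $2 \to 1$ and the two injections $1 \to 2$; $H$ of the fold sends both copies of $H(1)$ onto $H(1)$, and one checks this is consistent only when $|H(1)| \leq 1$, while $H(1) \neq \emptyset$ because $1$ is a retract of $1 \amalg 1$ (or because $H$ preserves the terminal object — no, that's not assumed; use retractions instead). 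Granting $H(1) \cong 1$, we get $H(n) \cong n$ for all finite $n$, naturally, i.e.\ $H$ restricted to $\FinSet$ is (isomorphic to) the identity.

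Next I would extend to infinite $X$. Write $X = \colim_{S} S$ as the filtered colimit of its finite subsets $S$, with the inclusions $S \incl X$. Applying $H$ gives a cocone $H(S) \cong S \to H(X)$, hence a comparison map $X \to H(X)$ (the colimit of the $S$'s maps into $H(X)$); but $H$ need not preserve filtered colimits, so this need not be iso. Instead, to build the natural transformation $\alpha_X \from H(X) \to U(X)$, I would use the following: given $\xi \in H(X)$ and any map $f \from X \to n$ into a finite set, $H(f)(\xi) \in H(n) \cong n$ picks out an element of $n$; as $f$ ranges over all finite-valued maps, this assignment is compatible with composition (by functoriality) and with the coproduct structure (by the preservation hypothesis), so it defines a consistent "choice", which is exactly an ultrafilter on $X$ — concretely, $\{A \sub X : H(\chi_A)(\xi) = 1 \in 2 = H(2)\}$, where $\chi_A \from X \to 2$ is the characteristic map. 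The partition condition of Proposition~\ref{propn:uf-concise} is then immediate: a partition $X = Y_1 \amalg \cdots \amalg Y_n$ is a map $X \to n$, and $H$ of it lands in exactly one of the $n$ summands of $H(n) = 1 \amalg \cdots \amalg 1$. By Corollary~\ref{cor:uf-3} (or directly), this set of subsets is genuinely an ultrafilter, so $\alpha_X(\xi) \in U(X)$ is well-defined; naturality in $X$ follows because pushing $\xi$ forward along $g \from X \to X'$ and then testing against $f' \from X' \to n$ is the same as testing $\xi$ against $f' \of g$.

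Finally, uniqueness. Any natural transformation $\beta \from H \to U$ must, on finite sets, be a natural transformation from (something isomorphic to) the identity functor to $U|_{\FinSet}$; since every ultrafilter on a finite set is principal, $U|_{\FinSet} \cong \id$ as well, and natural endomorphisms of the identity functor on $\FinSet$ (or on $\Set$) are trivial, so $\beta_S = \alpha_S$ for finite $S$. For general $X$ and $\xi \in H(X)$, naturality of $\beta$ with respect to every $f \from X \to n$ forces $U(f)(\beta_X(\xi)) = \beta_n(H(f)(\xi)) = \alpha_n(H(f)(\xi)) = U(f)(\alpha_X(\xi))$; since an ultrafilter on $X$ is determined by its pushforwards to all finite quotients (indeed to all $2$-element quotients, i.e.\ by which subsets it contains), we conclude $\beta_X(\xi) = \alpha_X(\xi)$. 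Hence $\alpha$ is the unique natural transformation $H \to U$, proving $U$ is terminal.

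I expect the main obstacle to be the rigidity argument showing $H(1) \cong 1$ (equivalently $H|_{\FinSet} \cong \id$) cleanly and without secretly assuming $H$ preserves the terminal object or all colimits: one must get nonemptiness of $H(1)$ from a retraction argument and get $|H(1)| = 1$ purely from finite-coproduct preservation together with the action of $H$ on the fold and codiagonal maps among finite sets. Everything after that — building $\alpha$ and checking uniqueness — is a routine diagram chase, modulo the fact (which we may cite as standard, or note follows from the filter axioms) that an ultrafilter is determined by, and freely specified by, a compatible choice of element in each finite quotient.
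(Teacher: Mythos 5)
Your opening step is false, and the rest of the argument as written leans on it. A finite-coproduct-preserving endofunctor $H$ of $\Set$ need not satisfy $H(1) \iso 1$, nor restrict to (anything isomorphic to) the identity on $\FinSet$: for any fixed set $A$, the functor $H = (\dashbk) \times A$ preserves all colimits, yet $H(1) = A$; and the constant functor at $\emptyset$ preserves finite coproducts, yet $H(1) = \emptyset$. (These cause no trouble for the theorem itself: the unique map $(\dashbk)\times A \to U$ sends $(x,a)$ to the principal ultrafilter at $x$.) Correspondingly, both of your rigidity arguments break down: $H(1)$ being a retract of $H(1) \amalg H(1)$ holds vacuously when $H(1) = \emptyset$, so it gives no nonemptiness; and for $H = (\dashbk) \times A$ the fold map $2 \to 1$ goes to the fold $A \amalg A \to A$, which is perfectly consistent with $|H(1)|$ arbitrary, so no contradiction is available. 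Since your definition of $\alpha_X(\xi)$ tests $H(\chi_A)(\xi)$ against ``$1 \in 2 = H(2)$'', and your uniqueness argument invokes triviality of natural endomorphisms of the identity on $\FinSet$, both steps presuppose the false identification $H|_{\FinSet} \iso \id$.

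The underlying idea is salvageable, but you must phrase it without normalizing $H$ on finite sets. Coproduct preservation gives, for each partition $X = Y \amalg (X \without Y)$, a decomposition $H(X) \iso H(Y) \amalg H(X \without Y)$ via the inclusions, so each $\xi \in H(X)$ lies in the image of exactly one summand; define
\[
\alpha_X(\xi) = \{\, Y \sub X \such \xi \in \im\bigl(H(Y \incl X)\bigr) \,\},
\]
which is exactly the formula in the paper's sketch (attributed to B\"orger), and verify the partition condition of Proposition~\ref{propn:uf-concise} using $n$-fold decompositions $H(X) \iso H(Y_1) \amalg \cdots \amalg H(Y_n)$ — no knowledge of $H(1)$ or $H(2)$ is needed. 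Uniqueness likewise should be run through coprojections rather than through $H|_{\FinSet} \iso \id$: for finite $n$, the images of the $H(\copr_i)$ cover $H(n)$, and naturality against $\copr_i \from 1 \to n$ forces any $\beta \from H \to U$ to send the $i$-th summand of $H(n)$ to the principal ultrafilter at $i$; then your final step (an ultrafilter is determined by its pushforwards to finite, indeed two-element, quotients) goes through as you wrote it.
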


\begin{prooflike}{Sketch proof}
Given a finite-coproduct-preserving endofunctor $S$ of $\Set$, the unique
natural transformation $\alpha\from S \to U$ is described as follows: for
each set $X$ and element $\sigma \in S(X)$,
\[
\alpha_X(\sigma)
=
\{ Y \sub X \such \sigma \in \im(S(Y \incl X)) \}.
\]
For details, see Theorem~2.1 of~\cite{BoerCU}.
\done
\end{prooflike}

\begin{cor}[B\"orger]   \label{cor:Boerger-monad}
The ultrafilter endofunctor $U$ has a unique monad structure.  With this
structure, it is terminal among all finite-coproduct-preserving monads on $\Set$.
\end{cor}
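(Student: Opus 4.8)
The plan is to deduce Corollary~\ref{cor:Boerger-monad} from Theorem~\ref{thm:Boerger-endo} by a purely formal argument about monad structures on a fixed endofunctor. The key observation is that $U$ is a \emph{monoid} in the monoidal category $(\ftrcat{\Set}{\Set}, \of, \id)$ once we equip it with a suitable multiplication, but we want to avoid choosing that multiplication by hand; instead, the terminality in Theorem~\ref{thm:Boerger-endo} should force everything. So first I would record that the class of finite-coproduct-preserving endofunctors of $\Set$ is closed under composition (the composite of two such functors preserves finite coproducts) and contains the identity, and that these composites are again terminal-receiving in the sense of the theorem; in particular, for any finite-coproduct-preserving endofunctor $S$, there is a \emph{unique} natural transformation $S \to U$.

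Granting that, the construction of the monad structure is automatic. Since $\id_{\Set}$ preserves finite coproducts, there is a unique natural transformation $\eta\from \id \to U$; this will be the unit. Since $U \of U$ preserves finite coproducts (being a composite of two such), there is a unique natural transformation $\mu\from U \of U \to U$; this will be the multiplication. The monad axioms---associativity $\mu \of (\mu U) = \mu \of (U\mu)$ and the two unit laws $\mu \of (\eta U) = \id_U = \mu \of (U\eta)$---are then each an equation between natural transformations out of a finite-coproduct-preserving endofunctor ($U\of U\of U$ for associativity, $U$ for the unit laws) into $U$, and by the uniqueness clause of Theorem~\ref{thm:Boerger-endo} any two such natural transformations coincide. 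Hence the axioms hold, and the same uniqueness shows that this is the \emph{only} monad structure on $U$: any monad $(U, \eta', \mu')$ has $\eta' = \eta$ and $\mu' = \mu$ by uniqueness of maps into $U$. This also shows, incidentally, that $\mu$ really is the ultrafilter-monad multiplication the paper declined to write down.

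For the terminality statement, let $(S, \eta_S, \mu_S)$ be any finite-coproduct-preserving monad on $\Set$. The underlying functor $S$ preserves finite coproducts, so Theorem~\ref{thm:Boerger-endo} gives a unique natural transformation $\alpha\from S \to U$, and it is the \emph{only} candidate for a monad morphism $S \to \mnd{U}$, so uniqueness of the monad morphism is immediate once we know $\alpha$ is one. That $\alpha$ is a monad morphism amounts to two equations: $\alpha \of \eta_S = \eta$ (compatibility with units) and $\alpha \of \mu_S = \mu \of (\alpha * \alpha)$ (compatibility with multiplications), where $\alpha * \alpha\from S\of S \to U\of U$ is the horizontal composite. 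The left-hand side of the first is a natural transformation $\id \to U$; by the uniqueness in Theorem~\ref{thm:Boerger-endo} it equals $\eta$. Similarly both sides of the second equation are natural transformations $S\of S \to U$, and $S\of S$ preserves finite coproducts, so they agree. Thus $\alpha$ is a monad morphism, and it is unique; this proves $\mnd{U}$ is terminal among finite-coproduct-preserving monads on $\Set$.

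The main obstacle is not really a difficulty of proof but of bookkeeping: one must be scrupulous that every endofunctor to which we apply the uniqueness clause of Theorem~\ref{thm:Boerger-endo} genuinely preserves finite coproducts---this is clear for $U$, $U\of U$, $U\of U\of U$, $S$, $S\of S$ since composites of finite-coproduct-preserving functors are finite-coproduct-preserving and the identity is one---and that the natural transformations being compared really do have the claimed source and target after whiskering and horizontal composition. Once the domains are seen to lie in the relevant class, there is nothing left to check: the entire content is that ``a terminal object has no nontrivial endomorphisms'' promoted to ``the set of natural transformations from a finite-coproduct-preserving endofunctor to $U$ is a singleton.'' I would present it exactly in that spirit, emphasising that no property of ultrafilters beyond Theorem~\ref{thm:Boerger-endo} is used.
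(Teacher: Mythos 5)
Your proof is correct and is essentially the paper's own argument, just spelled out in more detail: the paper likewise defines $\eta$ and $\mu$ as the unique natural transformations $\id \to U$ and $U \of U \to U$ and invokes terminality of the endofunctor $U$ (Theorem~\ref{thm:Boerger-endo}) to get the monad axioms, the uniqueness of the monad structure, and terminality among finite-coproduct-preserving monads. Nothing further is needed.
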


\begin{proof}
(Corollary~2.3 of~\cite{BoerCU}.)  Since $U \of U$ and the identity
preserve finite coproducts, there are unique natural transformations $U \of U
\to U$ and $1 \to U$.  The monad axioms follow by terminality of the
endofunctor $U$, as does terminality of the monad.  
\done
\end{proof}

There is also a topological description of the ultrafilter monad.
As shown by Manes~\cite{ManeTTC}, it is the monad induced by the forgetful
functor $\CptHff \to \Set$ and its left adjoint.  In particular, the
Stone--\v{C}ech compactification of a discrete space is the set of
ultrafilters on it.

\section{Codensity}
\label{sec:codensity}

Here we review the definitions of codense functor and codensity monad.  The
dual notion, density, has historically been more prominent, so we begin our
review there.

As shown by Kan, any functor $F$ from a small category $\cat{A}$ to a
cocomplete category $\cat{B}$ induces an adjunction
\[
\xymatrix@C+5ex{
\cat{B}
\ar@<1ex>[r]^-{\Hom(F, \dashbk)} \ar@{}[r]|-\top &
\ftrcat{\cat{A}^\op}{\Set}
\ar@<1ex>[l]^-{\dashbk \otimes F}
}
\]
where $(\Hom(F,B))(A) = \cat{B}(F(A), B)$.  A famous example is the functor
$F\from \Delta \to \Top$ assigning to each nonempty finite ordinal $[n]$
the topological $n$-simplex $\Delta^n$.  Then $\Hom(F, \dashbk)$ sends a
topological space to its singular simplicial set, and $\dashbk \otimes F$
sends a simplicial set to its geometric realization.

Another example gives an abstract explanation of the concept of sheaf
(\cite{MaMo}, Section~II.6).  Let $X$ be a topological space, with poset
$\Open(X)$ of open subsets.  Define $F \from \Open(X) \to \Top/X$ by $F(W)
= (W \incl X)$.  This induces an adjunction between presheaves on $X$ and
spaces over $X$, and, like any adjunction, it restricts canonically to an
equivalence between full subcategories.  Here, these are the categories of
sheaves on $X$ and \'etale bundles over $X$.  The induced monad on the
category of presheaves is sheafification.

In general, $F$ is \demph{dense} if the right adjoint $\Hom(F, \dashbk)$ is
full and faithful, or equivalently if the counit is an isomorphism.  For
the counit to be an isomorphism means that every object of $\cat{B}$ is a
colimit of objects of the form $F(A)$ ($A \in \cat{A}$) in a canonical way;
for example, the Yoneda embedding $\cat{A} \to \ftrcat{\cat{A}^\op}{\Set}$
is dense, so every presheaf is canonically a colimit of representables.
More loosely, $F$ is dense if the objects of $\cat{B}$ can be effectively
probed by mapping into them from objects of the form $F(A)$.  In the case
of the Yoneda embedding, this is the familiar idea that presheaves can be
probed by mapping into them from representables.

Finitely presentable objects provide further important examples.  For
instance, the embedding $\Grp_\text{fp} \incl \Grp$ is dense, where $\Grp$
is the category of groups and $\Grp_\text{fp}$ is the full subcategory of
groups that are finitely presentable.  Similarly, $\FinSet$ is dense in
$\Set$.

Here we are concerned with \emph{co}density.  The general theory is of course
formally dual to that of density, but its application to familiar functors
seems not to have been so thoroughly explored.

Let $G\from \cat{B} \to \cat{A}$ be a functor.  There is an induced functor
\[
\Hom(\dashbk, G)\from \cat{A} \to \ftrcat{\cat{B}}{\Set}^\op
\]
defined by
\[
\bigl( \Hom(A, G) \bigr)(B)
=
\cat{A}(A, G(B))
\]
($A \in \cat{A}, B \in \cat{B}$).  The functor $G$ is \demph{codense} if
$\Hom(\dashbk, G)$ is full and faithful.  

Assume for the rest of this section that $\cat{B}$ is \demph{essentially
small} (equivalent to a small category) and that $\cat{A}$ has small
limits.  (This assumption will be relaxed in Section~\ref{sec:kan}.)  Then
$\Hom(\dashbk, G)$ has a right adjoint, also denoted by $\Hom(\dashbk, G)$:
\begin{equation}        \label{eq:coden-adjn}
\xymatrix@C+5ex{
\cat{A}
\ar@<1ex>[r]^-{\Hom(\dashbk, G)} \ar@{}[r]|-\bot &
\ftrcat{\cat{B}}{\Set}^\op.
\ar@<1ex>[l]^-{\Hom(\dashbk, G)}
}
\end{equation}
This right adjoint can be described as an end or as a limit: for $Y \in
\ftrcat{\cat{B}}{\Set}$, 
\[
\Hom(Y, G)
=
\int_{B \in \cat{B}} [Y(B), G(B)]
=
\lt{B \in \cat{B},\ y \in Y(B)} G(B),
\]
where the square bracket notation is as defined at the end of the
introduction, and the limit is over the category of elements of $Y$.  If
$\cat{A} = \Set$ then $\Hom(Y, G)$ is the set of natural transformations
from $Y$ to $G$.  In any case, the adjointness asserts that
\[
\cat{A}(A, \Hom(Y, G))
\iso
\ftrcat{\cat{B}}{\Set} (Y, \Hom(A, G))
\]
naturally in $A \in \cat{A}$ and $Y \in \ftrcat{\cat{B}}{\Set}$.  

The adjunction~\eqref{eq:coden-adjn} induces a monad $\mnd{T}^G = (T^G,
\eta^G, \mu^G)$ on $\cat{A}$, the \demph{codensity monad} of $G$.
Explicitly,
\[
T^G(A) 
=
\int_{B \in \cat{B}} [\cat{A}(A, G(B)),\, G(B)]
=
\lt{\substack{B \in \cat{B},\\ f \from A \to G(B)}}
G(B)
\]
($A \in \cat{A}$).  As for any adjunction, the left adjoint is full and
faithful if and only if the unit is an isomorphism.  Thus, $G$ is codense
if and only if for each $A \in \cat{A}$, the canonical map
\[
\eta^G_A \from A \to \int_B [\cat{A}(A, G(B)),\, G(B)]
\]
is an isomorphism.  (Then each object of $\cat{A}$ is a limit of objects
$G(B)$ in a canonical way.)  This happens if and only if the codensity monad
of $G$ is isomorphic to the identity.  In that sense, the codensity monad
of a functor measures its failure to be codense.

In many cases of interest, $G$ is a subcategory inclusion $\cat{B} \incl
\cat{A}$.  We then transfer epithets, calling $\cat{B}$ codense if $G$ is,
and writing $\mnd{T}^\cat{B}$ instead of $\mnd{T}^G$.

We continue with the theory of codensity monads in Sections~\ref{sec:kan}
and~\ref{sec:co-adj}, but we now have all we need to proceed to the result
on ultrafilters.

\section{Ultrafilters via codensity}
\label{sec:uf-co}

Here we give an account of the fact, due to Kennison and Gildenhuys, that
the ultrafilter monad is the codensity monad of the subcategory $\FinSet$
of $\Set$.  The proof is made more transparent by adopting the language of
integration and measure.

First, though, let us see roughly why the result might be true.  Write
$\mnd{T} = (T, \eta, \mu)$ for the codensity monad of $\FinSet \incl \Set$.
Fix a set $X$.  Then
\[
T(X) = \int_{B \in \FinSet} [[X, B], B],
\]
which is the set of natural transformations
\[
\xymatrix@R+1em{
\FinSet \rtwocell^{[X, \dashbk]}_{\text{inclusion}} &
\Set.\\
}
\]
An element of $T(X)$ is, therefore, an operation that takes as input a finite
set $B$ and a function $X \to B$, and returns as output an element of $B$;
and it does so in a way that is natural in $B$.  There is certainly one
such operation for each element $x$ of $X$, namely, evaluation at $x$.  Less
obviously, there is one such operation for each ultrafilter $\uf{U}$ on
$X$: given $f\from X \to B$ as input, return as output the unique
element $b \in B$ such that $f^{-1}(b) \in \uf{U}$.  (There \emph{is} a
unique $b$ with this property, by
Proposition~\ref{propn:uf-concise}(\ref{part:uf-concise-n}).)  For example, if
$\uf{U}$ is the principal ultrafilter on $x \in X$, this operation is just
evaluation at $x$.  It turns out that every element $I \in T(X)$ arises from
an ultrafilter, which one recovers from $I$ by taking $B = 2$ and noting
that $[[X, 2], 2] \iso PP(X)$.  That, in essence, is how we will prove
the theorem.  

An ultrafilter is a probability measure that paints the world in black and
white: everything is either almost surely true or almost surely false.
Indeed, an ultrafilter $\uf{U}$ on a set $X$ is in particular a subset of
$P(X)$, and therefore has a characteristic function $\mu_\uf{U}\from P(X) \to
\{0, 1\}$.  On the other hand, a \demph{finitely additive measure} on a set
$X$ (or properly speaking, on the algebra of all subsets of $X$) is a
function $\mu\from P(X) \to [0, \infty]$ such that
\[
\mu(\emptyset) = 0,
\qquad
\mu(Y \cup Z) + \mu(Y \cap Z) = \mu(Y) + \mu(Z)
\]
for all $Y, Z \sub X$.  (Equivalently, $\mu(\bigcup_i Y_i) = \sum_i
\mu(Y_i)$ for all finite families $(Y_i)$ of pairwise disjoint subsets of
$X$.)  We call $\mu$ a finitely additive \demph{probability} measure if
also $\mu(X) = 1$.  The following correspondence has been observed many
times.

\begin{lemma}   \label{lemma:uf-prob}
Let $X$ be a set.  A subset $\uf{U}$ of $P(X)$ is an ultrafilter if and only
if its characteristic function $\mu_\uf{U}\from P(X) \to \{0, 1\}$ is a
finitely additive probability measure.  This defines a bijection
between the ultrafilters on $X$ and the finitely additive probability
measures on $X$ with values in $\{0, 1\}$.  \done
\end{lemma}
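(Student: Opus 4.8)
The plan is to separate the two assertions. The claim that $\uf{U}\mapsto\mu_\uf{U}$ is a bijection is formal: a subset of $P(X)$ is recovered from its characteristic function as $\mu^{-1}(1)$, so $\uf{U}\mapsto\mu_\uf{U}$ and $\mu\mapsto\mu^{-1}(1)$ are already mutually inverse bijections between all subsets of $P(X)$ and all functions $P(X)\to\{0,1\}$; once the first assertion is proved, these restrict to a bijection between the two subclasses named in the statement. So the work is entirely in the biconditional, which I would prove in both directions by elementary case analysis, using at the key moment that the values are confined to $\{0,1\}$. (One could alternatively route through the already-noted correspondence between ultrafilters on $X$ and lattice homomorphisms $P(X)\to 2$, by checking that the $\{0,1\}$-valued finitely additive probability measures are exactly such homomorphisms; but the direct route costs no more.)

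For the forward direction, let $\uf{U}$ be an ultrafilter and $\mu=\mu_\uf{U}$. First, $\emptyset\notin\uf{U}$: were it a member, upward closure would force every subset of $X$ into $\uf{U}$, and then $X$ and $X\without X$ would both lie in $\uf{U}$, contradicting the ``not both'' clause. Hence $\mu(\emptyset)=0$, and $\mu(X)=1$ since $X\in\uf{U}$. For the modular identity $\mu(Y\cup Z)+\mu(Y\cap Z)=\mu(Y)+\mu(Z)$ I would run through the four cases given by whether each of $Y,Z$ lies in $\uf{U}$; in each, the membership of $Y\cap Z\sub Y\sub Y\cup Z$ is pinned down by upward closure together with the complementation property of an ultrafilter (for instance, if neither $Y$ nor $Z$ is in $\uf{U}$ then $X\without Y$ and $X\without Z$ are, hence so is their intersection $X\without(Y\cup Z)$, whence $Y\cup Z\notin\uf{U}$), and the two sides of the identity are then seen to coincide.

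For the converse, let $\mu\from P(X)\to\{0,1\}$ be a finitely additive probability measure and put $\uf{U}=\mu^{-1}(1)$. Applying additivity to $X=Y\amalg(X\without Y)$ gives $\mu(Y)+\mu(X\without Y)=1$, so---because the values lie in $\{0,1\}$---exactly one of $Y$, $X\without Y$ is in $\uf{U}$; in particular $X\in\uf{U}$ and $\emptyset\notin\uf{U}$. For upward closure, if $Z\sub Y$ with $Z\in\uf{U}$ then additivity on $Y=Z\amalg(Y\without Z)$ gives $\mu(Y)=1+\mu(Y\without Z)$, forcing $\mu(Y)=1$ (and $\mu(Y\without Z)=0$), so $Y\in\uf{U}$. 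For closure under binary intersection, if $\mu(Y)=\mu(Z)=1$ then $\mu(Y\cup Z)+\mu(Y\cap Z)=2$ with each summand at most $1$, forcing $\mu(Y\cap Z)=1$. Thus $\uf{U}$ is a filter with the complementation property, i.e.\ an ultrafilter.

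I do not expect a serious obstacle. The one point that needs care is that the defining modular identity of a finitely additive measure does \emph{not} by itself deliver the ``exactly one'' dichotomy of an ultrafilter---a probability measure taking the value $1/2$ would violate it---so the converse argument must invoke the restriction to $\{0,1\}$-values precisely at the complementation step. With the biconditional established, the bijection claim is immediate from the remarks in the first paragraph.
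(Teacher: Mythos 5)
Your proof is correct. The paper in fact gives no argument for this lemma at all---it is stated as a well-known observation and closed immediately---so your elementary verification (the formal bijection between subsets of $P(X)$ and $\{0,1\}$-valued functions, the four-case check of the modular identity in the forward direction, and the converse using complements, upward closure via $Y = Z \amalg (Y\without Z)$, and binary intersections, each invoking the $\{0,1\}$-value restriction at exactly the right moments) is precisely the routine proof the paper is implicitly deferring to, and it fills the gap completely.
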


With every notion of measure comes a notion of integration.  Integrating a
function with respect to a \emph{probability} measure amounts to taking its
average value, and taking averages typically requires some algebraic
or order-theoretic structure, which we do not have.  Nevertheless,
it can be done, as follows.

Let us say that a function between sets is \demph{simple} if its image is
finite.  (The name is justified in Section~\ref{sec:rigs}.)  The set of
simple functions from one set, $X$, to another, $R$, is written as $\Simp(X,
R)$; categorically, it is the coend
\[
\Simp(X, R) 
=
\int^{B \in \FinSet} \Set(X, B) \times \Set(B, R).
\]
The next result states that given an ultrafilter $\uf{U}$ on a set $X$,
there is a unique sensible way to define integration of simple functions on
$X$ with respect to the measure $\mu_\uf{U}$.  The two conditions defining
`sensible' are that the average value (integral) of a constant function is
that constant, and that changing a function on a set of measure zero does
not change its integral.  

\begin{propn}   \label{propn:uf-int-unique}
Let $X$ be a set and $\uf{U}$ an ultrafilter on $X$.  Then for each set
$R$, there is a unique map
\[
\int_X \idashbk \dee\uf{U}\from \Simp(X, R) \to R
\]
such that
\begin{enumerate}
\item   \label{item:uf-int-const}
$\int_X r \dee\uf{U} = r$ for all $r \in R$, where the integrand is the
function with constant value $r$

\item   \label{item:uf-int-zero}
$\int_X f \dee\uf{U} = \int_X g \dee\uf{U}$ whenever $f, g \in \Simp(X, R)$
with $\{x \in X \such f(x) = g(x) \} \in \uf{U}$.
\end{enumerate}
\end{propn}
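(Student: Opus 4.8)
Given an ultrafilter $\uf{U}$ on $X$ and a simple function $f \from X \to R$, define $\int_X f \dee\uf{U}$ to be the unique element $b \in \im(f) \sub R$ such that $f^{-1}(b) \in \uf{U}$. Such a $b$ exists and is unique: the fibres of $f$ partition $X$ into finitely many (nonempty) pieces indexed by $\im(f)$, so by Proposition~\ref{propn:uf-concise}(\ref{part:uf-concise-n}) exactly one fibre lies in $\uf{U}$. I must check this is well-defined on the coend $\Simp(X, R) = \int^{B} \Set(X, B) \times \Set(B, R)$, i.e.\ that if $f$ factors as $X \toby{p} B \toby{q} R$ with $B$ finite, the value $\int_X f \dee\uf{U}$ depends only on the composite $qp$ and not on the chosen factorization; concretely, one checks compatibility with the coend identification $(p, q'q) \sim (q'p, q)$ for $q' \from B \to B'$ in $\FinSet$, which reduces to the fact that $p_*\uf{U}$ is an ultrafilter on $B$ and $(q'p)^{-1}(b') = p^{-1}(q'^{-1}(b'))$. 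Then \ref{item:uf-int-const} is immediate (a constant function has a single fibre, namely $X \in \uf{U}$), and \ref{item:uf-int-zero} follows because if $\{x \such f(x) = g(x)\} \in \uf{U}$ then the fibre $g^{-1}(b)$ contains $\{x \such f(x) = g(x)\} \cap f^{-1}(b)$, and since the latter is an intersection of two members of $\uf{U}$ it lies in $\uf{U}$, forcing $g^{-1}(b) \in \uf{U}$; hence $\int_X g \dee\uf{U} = b = \int_X f \dee\uf{U}$.

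**Uniqueness.** Suppose $\phi \from \Simp(X, R) \to R$ satisfies \ref{item:uf-int-const} and \ref{item:uf-int-zero}; I claim $\phi(f) = \int_X f \dee\uf{U}$ for every simple $f$. Let $b = \int_X f \dee\uf{U}$, so $f^{-1}(b) \in \uf{U}$, and let $g \from X \to R$ be the constant function with value $b$. Then $\{x \such f(x) = g(x)\} = f^{-1}(b) \in \uf{U}$, so by \ref{item:uf-int-zero} we get $\phi(f) = \phi(g)$, and by \ref{item:uf-int-const} we get $\phi(g) = b$. Hence $\phi(f) = b = \int_X f \dee\uf{U}$, as required. \done
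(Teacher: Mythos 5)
Your proof is correct and follows essentially the same route as the paper's: define $\int_X f \dee\uf{U}$ as the unique element whose $f$-fibre lies in $\uf{U}$, check \eqref{item:uf-int-const} and \eqref{item:uf-int-zero} using upward closure and closure under intersections, and obtain uniqueness by comparing $f$ with the constant function at that value. (The coend well-definedness check is superfluous, since $\Simp(X,R)$ is by definition just the set of functions $X \to R$ with finite image and your formula refers only to $f$ itself, but it does no harm.)
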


In analysis, it is customary to write $\int_X f \dee\mu$ for the integral
of a function $f$ with respect to (or `against') a measure $\mu$.  Logically,
then, we should write our integration operator as $\int_X \idashbk
\dee\mu_\uf{U}$.  However, we blur the distinction between $\uf{U}$ and
$\mu_\uf{U}$, writing $\int_X \idashbk \dee\uf{U}$ (or just $\int \idashbk
\dee\uf{U}$) instead.

\begin{proof}
Let $R$ be a set.  For existence, given any $f \in \Simp(X, R)$, simplicity
guarantees that there is a unique element $\int_X f \dee\uf{U}$ of $R$ such
that
\[
f^{-1} \biggl( \int_X f \dee\uf{U} \biggr) \in \uf{U}.
\]
Condition~\eqref{item:uf-int-const} holds because $X \in \uf{U}$.
For~\eqref{item:uf-int-zero}, let $f$ and $g$ be simple functions such that
$\Eq(f, g) = \{ x \in X \such f(x) = g(x) \}$ belongs to $\uf{U}$.  We have
\[
f^{-1} \biggl( \int_X f \dee\uf{U} \biggr) \cap \Eq(f, g)
\,\sub\, 
g^{-1} \biggl( \int_X f \dee\uf{U} \biggr),
\]
and $f^{-1}\bigl( \int f \dee\uf{U} \bigr), \Eq(f, g) \in \uf{U}$, so by
definition of ultrafilter, $g^{-1}\bigl( \int f \dee\uf{U} \bigr) \in
\uf{U}$.  But $\int g \dee\uf{U}$ is by definition the unique element $r$
of $R$ such that $g^{-1}(r) \in \uf{U}$, so $\int f \dee\uf{U} = \int g
\dee\uf{U}$, as required.

For uniqueness, let $f \in \Simp(X, R)$.  Since $f$ is simple, there is a
unique $r \in R$ such that $f^{-1}(r) \in \uf{U}$.  Then $\Eq(f, r) \in
\uf{U}$, so~\eqref{item:uf-int-const} and~\eqref{item:uf-int-zero} force
$\int f \dee\uf{U} = r$.  \done
\end{proof}

Integration is natural in both the codomain $R$ and the domain pair $(X,
\uf{U})$: 
\begin{lemma}   \label{lemma:uf-int-nat}
\begin{enumerate}
\item   \label{part:uf-int-nat-cod}
Let $\uf{U}$ be an ultrafilter on a set $X$.  Then integration of simple
functions against $\uf{U}$ defines a natural transformation
\[
\xymatrix@C+10ex{
\Set \rtwocell^{\Simp(X, \dashbk)}_{\id}
{\quad\quad\int \idashbk \dee\uf{U}} &
\Set.
}
\]

\item   \label{part:uf-int-nat-dom}
For any map $X \toby{p} Y$ of sets and ultrafilter $\uf{U}$ on $X$, the
triangle
\[
\xymatrix{
\Simp(X, \dashbk) \ar[rd]_{\int_X \idashbk \dee\uf{U}}  &
        &
\Simp(Y, \dashbk) \ar[ll]_{\dashbk\of p} 
\ar[ld]^{\int_Y \idashbk \dee(p_* \uf{U})}      \\
        &
\id
}
\]
in $\ftrcat{\Set}{\Set}$ commutes.
\end{enumerate} 
\end{lemma}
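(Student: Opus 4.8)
The plan is to deduce everything from the characterization of the integral established in Proposition~\ref{propn:uf-int-unique}: for a simple function $f$ on $X$, $\int_X f \dee\uf{U}$ is the \emph{unique} element $r$ of the codomain whose fibre $f^{-1}(r)$ belongs to $\uf{U}$. Both parts of the lemma are assertions about diagrams in $\ftrcat{\Set}{\Set}$, so in each case it suffices to check commutativity at an arbitrary set $R$; and at $R$ both paths send a simple function to a prescribed element of $R$ (or of $R'$), which one identifies by exhibiting the relevant fibre inside $\uf{U}$ and invoking uniqueness. The only points requiring a moment's care are that the composites appearing remain simple (so that their integrals are even defined) and, in part~(\ref{part:uf-int-nat-dom}), that the edges of the triangle really are natural transformations — for the two integration edges this is exactly part~(\ref{part:uf-int-nat-cod}), applied to $\uf{U}$ and to $p_*\uf{U}$ respectively.

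For part~(\ref{part:uf-int-nat-cod}), I would fix a map $h\from R \to R'$; naturality in the codomain amounts to the identity $h\bigl(\int_X f \dee\uf{U}\bigr) = \int_X (h \of f) \dee\uf{U}$ for every $f \in \Simp(X, R)$. First note that $h \of f$ is simple, since $\im(h \of f) \sub h(\im f)$ is finite, so the right-hand side is defined. Now put $r = \int_X f \dee\uf{U}$, so $f^{-1}(r) \in \uf{U}$ by Proposition~\ref{propn:uf-int-unique}. Since $f^{-1}(r) \sub (h \of f)^{-1}(h(r))$ and $\uf{U}$ is upwards closed, $(h \of f)^{-1}(h(r)) \in \uf{U}$. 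But $\int_X (h \of f) \dee\uf{U}$ is by definition the unique element of $R'$ with this property, so $\int_X (h \of f) \dee\uf{U} = h(r)$, as required.

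For part~(\ref{part:uf-int-nat-dom}), the top edge $\dashbk \of p \from \Simp(Y, \dashbk) \to \Simp(X, \dashbk)$ is the natural transformation induced by precomposition with $p$ (on underlying simple functions it is $f \mapsto f \of p$, which is simple because $\im(f \of p) \sub \im f$). Evaluating the triangle at a set $R$, commutativity says $\int_X (f \of p) \dee\uf{U} = \int_Y f \dee(p_* \uf{U})$ for every simple $f \from Y \to R$. Let $r = \int_Y f \dee(p_* \uf{U})$, so $f^{-1}(r) \in p_* \uf{U}$, which by definition of the pushforward ultrafilter means $p^{-1}\bigl(f^{-1}(r)\bigr) \in \uf{U}$. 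Since $p^{-1}\bigl(f^{-1}(r)\bigr) = (f \of p)^{-1}(r)$, the fibre of $f \of p$ over $r$ lies in $\uf{U}$; by the uniqueness clause of Proposition~\ref{propn:uf-int-unique} this forces $\int_X (f \of p) \dee\uf{U} = r$. In short, there is no genuine obstacle here: once the integral is pinned down by its defining fibre property, both naturality statements fall out of upwards closure and the definition of $p_*\uf{U}$, and the only thing to watch throughout is the standing simplicity of the functions involved.
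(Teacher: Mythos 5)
Your proof is correct and follows essentially the same route as the paper: both parts are reduced to the defining fibre property of $\int_X \idashbk \dee\uf{U}$ from Proposition~\ref{propn:uf-int-unique}, with part~(\ref{part:uf-int-nat-cod}) following from the inclusion $f^{-1}(r) \sub (h \of f)^{-1}(h(r))$ and upwards closure, and part~(\ref{part:uf-int-nat-dom}) from the definition of $p_*\uf{U}$. Your explicit checks that the relevant composites are simple are a minor tidiness the paper leaves implicit, not a difference in method.
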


\begin{proof}
For~\eqref{part:uf-int-nat-cod}, we must prove that for any map $R
\toby{\theta} S$ of finite sets and any function $f\from X \to R$, 
\begin{equation}        \label{eq:uf-int-nat-cod-pf}
\theta \biggl( \int_X f \dee\uf{U} \biggr)
=
\int_X \theta \of f \dee\uf{U}.
\end{equation}
Indeed,
\[
(\theta \of f)^{-1} \biggl( \theta \biggl( \int_X f \dee\uf{U} \biggr) \biggr)
\supseteq
f^{-1} \biggl( \int_X f \dee\uf{U} \biggr) 
\in 
\uf{U},
\]
so $(\theta \of f)^{-1} \bigl( \theta \bigl( \int f \dee\uf{U} \bigr)
\bigr) \in \uf{U}$, and~\eqref{eq:uf-int-nat-cod-pf} follows.

For~\eqref{part:uf-int-nat-dom}, let $R \in \FinSet$ and $g \in \Simp(Y,
R)$.  We must prove that
\begin{equation}        \label{eq:uf-int-nat-dom-pf}
\int_X (g \of p) \dee\uf{U}
=
\int_Y g \dee(p_* \uf{U})
\end{equation}
(the analogue of the classical formula for integration under a change of
variable).  Indeed,
\[
g^{-1} \biggl( \int_Y g \dee(p_* \uf{U}) \biggr) \in p_* \uf{U},
\]
which by definition of $p_* \uf{U}$ means that
\[
(g \of p)^{-1} \biggl( \int_Y g \dee(p_* \uf{U}) \biggr) 
\in 
\uf{U},
\]
giving~\eqref{eq:uf-int-nat-dom-pf}.
\done
\end{proof}

For the next few results, we will allow $R$ to vary within a subcategory
$\cat{B}$ of $\FinSet$.  (The most important case is $\cat{B} = \FinSet$.)
Clearly $\Simp(X, B) = [X, B]$ for all $B \in \cat{B}$.  The notation
$\mnd{T}^\cat{B}$ will mean the codensity monad of $\cat{B} \incl \Set$
(not $\cat{B} \incl \FinSet$).  Thus, whenever $X$ is a set, $T^\cat{B}(X)$
is the set of natural transformations
\[
\xymatrix@C+1em{
\cat{B}
\rtwocell^{[X, \dashbk]}_{\text{inclusion}}{}   &
\Set.
}
\]
We will regard elements of $T^\cat{B}(X)$ as integration operators: an
element $I \in T^\cat{B}(X)$ consists of a function $I = I_B\from [X, B]
\to B$ for each $B \in \cat{B}$, such that
\begin{equation}        \label{eq:int-nat-square}
\begin{centerdiag}
\xymatrix{
[X, B] \ar[r]^{\theta\of\dashbk} \ar[d]_{I_B}   &
[X, C] \ar[d]^{I_C}     \\
B \ar[r]_\theta &
C
}
\end{centerdiag}
\end{equation}
commutes whenever $B \toby{\theta} C$ is a map in $\cat{B}$.

\begin{propn}   \label{propn:uf-transf}
Let $\cat{B}$ be a subcategory of $\FinSet$.  Then there is a natural
transformation $U \to T^\cat{B}$ with components
\begin{equation}        \label{eq:uf-int-transf}
\begin{mapdefn}
U(X)    &\to            &T^\cat{B}(X)                   \\
\uf{U}  &\longmapsto    &\int_X \idashbk \dee\uf{U}  
\end{mapdefn}
\end{equation}
($X \in \Set$).
\end{propn}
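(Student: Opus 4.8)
The plan is to reduce the proposition entirely to Lemma~\ref{lemma:uf-int-nat}. Two things need checking: first, that for each set $X$ and ultrafilter $\uf{U}$ on $X$ the rule $B \mapsto \bigl(\int_X \idashbk \dee\uf{U}\from [X,B] \to B\bigr)$ really is a bona fide element of $T^\cat{B}(X)$; and second, that the resulting map $U(X) \to T^\cat{B}(X)$ of~\eqref{eq:uf-int-transf} is natural in $X$.

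For the first point, I would recall that (since $\Simp(X,B) = [X,B]$ for finite $B$) an element of $T^\cat{B}(X)$ is exactly a family $(I_B)_{B \in \cat{B}}$ of functions $I_B\from [X,B] \to B$ making the naturality square~\eqref{eq:int-nat-square} commute for every morphism $\theta$ of $\cat{B}$. Every such $\theta$ is in particular a map of finite sets, so the required commutativity is an instance of Lemma~\ref{lemma:uf-int-nat}(\ref{part:uf-int-nat-cod}), which establishes naturality over all of $\Set$, a fortiori over $\cat{B}$. Hence $\int_X \idashbk \dee\uf{U} \in T^\cat{B}(X)$, and~\eqref{eq:uf-int-transf} genuinely defines a function $U(X) \to T^\cat{B}(X)$.

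For the second point, I would unwind the end formula for the codensity monad to identify its action on a morphism $p\from X \to Y$: the map $T^\cat{B}(p)\from T^\cat{B}(X) \to T^\cat{B}(Y)$ is precomposition with the natural transformation $\dashbk \of p\from [Y,\dashbk] \to [X,\dashbk]$, so it sends a family $(I_B)$ to the family whose $B$-component is $g \mapsto I_B(g \of p)$ for $g \in [Y,B]$. Thus the naturality square for~\eqref{eq:uf-int-transf} at $p$, evaluated on an ultrafilter $\uf{U}$ on $X$ and on an element $g \in [Y,B]$ with $B \in \cat{B}$, asserts precisely that $\int_X (g \of p) \dee\uf{U} = \int_Y g \dee(p_*\uf{U})$ --- which is equation~\eqref{eq:uf-int-nat-dom-pf}, i.e.\ Lemma~\ref{lemma:uf-int-nat}(\ref{part:uf-int-nat-dom}) applied with $R = B$. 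That completes the argument.

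The only step that calls for any care, as opposed to pure bookkeeping, is writing down the morphism part of $T^\cat{B}$ correctly from its description as an end; once that is in place there is no real obstacle, since the two things to be verified are exactly the two clauses of Lemma~\ref{lemma:uf-int-nat} specialised to finite codomains lying in $\cat{B}$.
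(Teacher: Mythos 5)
Your proposal is correct and follows essentially the same route as the paper: the paper's proof likewise invokes Lemma~\ref{lemma:uf-int-nat}(\ref{part:uf-int-nat-cod}) to see that $\int_X \idashbk \dee\uf{U}$ is a well-defined element of $T^\cat{B}(X)$ and Lemma~\ref{lemma:uf-int-nat}(\ref{part:uf-int-nat-dom}) for naturality in $X$. Your extra step of unwinding the action of $T^\cat{B}$ on morphisms as precomposition is exactly the bookkeeping the paper leaves implicit, and it is done correctly.
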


\begin{proof}
Lemma~\ref{lemma:uf-int-nat}(\ref{part:uf-int-nat-cod}) guarantees
that~\eqref{eq:uf-int-transf} is a well-defined function for each $X$.   
Lemma~\ref{lemma:uf-int-nat}(\ref{part:uf-int-nat-dom}) tells us that it is
natural in $X$.
\done
\end{proof}

The transformation of Proposition~\ref{propn:uf-transf} turns measures
(ultrafilters) into integration operators.  In analysis, we recover a
measure $\mu$ from its corresponding integration operator via the equation
$\mu(Y) = \int \chi_Y \dee\mu$.  To imitate this here, we need some notion
of characteristic function, and for that we need $\cat{B}$ to contain some
set with at least two elements.

So, suppose that we have fixed some set $\Omega \in \cat{B}$ and elements
$0, 1 \in \Omega$ with $0 \neq 1$.  For any set $X$ and $Y \sub X$, define
$\chi_Y \from X \to \Omega$ by
\begin{equation}        \label{eq:char-fn-defn}
\chi_Y(x) 
=
\begin{cases}
1       &\text{if } x \in Y             \\
0       &\text{otherwise.}
\end{cases}
\end{equation}
Then for any ultrafilter $\uf{U}$ on $X$, we have
\begin{equation}        \label{eq:char-fn-int}
\int_X \chi_Y \dee\uf{U}
=
\begin{cases}
1       &\text{if } Y \in \uf{U}        \\
0       &\text{otherwise.}
\end{cases}
\end{equation}
Hence
\begin{equation}        \label{eq:recover-uf}
\uf{U} 
=
\biggl\{ Y \sub X \such \int_X \chi_Y \dee\uf{U} = 1 \biggr\}.
\end{equation}
We have thus recovered $\uf{U}$ from $\int_X \idashbk \dee\uf{U}$.

The full theorem is as follows.

\begin{thm}     \label{thm:main}
Let $\cat{B}$ be a full subcategory of $\FinSet$ containing at least one
set with at least three elements.  Then the codensity monad of $\cat{B}
\incl \Set$ is isomorphic to the ultrafilter monad.
\end{thm}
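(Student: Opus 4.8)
The plan is to show that the natural transformation $U \to T^{\cat{B}}$ of Proposition~\ref{propn:uf-transf} is an isomorphism of monads. Naturality and the fact that it is a morphism of monads will be established separately; the heart of the matter is that each component $U(X) \to T^{\cat{B}}(X)$ is a bijection.

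First I would treat injectivity, which is essentially already done: equation~\eqref{eq:recover-uf} shows that an ultrafilter $\uf{U}$ on $X$ can be recovered from the integration operator $\int_X \idashbk \dee\uf{U}$ by evaluating it on characteristic functions into a fixed two-element set $\{0,1\} \subseteq \Omega$, where $\Omega \in \cat{B}$ is a chosen set with at least three elements. So distinct ultrafilters give distinct operators, and the map $U(X) \to T^{\cat{B}}(X)$ is injective for every $X$.

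The substantive step is surjectivity: given $I = (I_B)_{B \in \cat{B}} \in T^{\cat{B}}(X)$, I must produce an ultrafilter $\uf{U}$ on $X$ with $\int_X \idashbk \dee\uf{U} = I$. Mimicking~\eqref{eq:recover-uf}, define $\uf{U} = \{\, Y \subseteq X : I_{\Omega}(\chi_Y) = 1 \,\}$, where $\chi_Y \from X \to \Omega$ lands in $\{0,1\}$. To verify $\uf{U}$ is an ultrafilter I would invoke Corollary~\ref{cor:uf-3}: it suffices to check that for every partition $X = Y_1 \amalg Y_2 \amalg Y_3$ into three subsets, exactly one $Y_i$ lies in $\uf{U}$. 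Here is where the hypothesis $|\Omega| \geq 3$ is used: fix three distinct elements $a_1, a_2, a_3 \in \Omega$ (we may take $a_1 = 1$, $a_2 = 0$, and $a_3$ any third element, adjusting which value counts as ``large'' via naturality), and let $f \from X \to \{a_1,a_2,a_3\} \subseteq \Omega$ be the function sending $Y_i$ to $a_i$. Then $I_{\Omega}(f) = a_j$ for a unique $j$, and applying the naturality square~\eqref{eq:int-nat-square} to the maps $\{a_1,a_2,a_3\} \to \{0,1\}$ that single out each $a_i$ in turn, one reads off that $Y_j \in \uf{U}$ and $Y_i \notin \uf{U}$ for $i \neq j$. (The three-element hypothesis is exactly what makes this argument go: with only two elements available one cannot separate a three-block partition, which is why Corollary~\ref{cor:uf-3} cannot be pushed down to $n=2$.) This gives $\uf{U} \in U(X)$.

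It then remains to show $I = \int_X \idashbk \dee\uf{U}$, i.e.\ $I_B = \bigl(\int_X \idashbk \dee\uf{U}\bigr)_B$ for all $B \in \cat{B}$. Given $f \from X \to B$, let $r = I_B(f) \in B$; naturality of $I$ along the map $\chi_{f^{-1}(r)}$-style composite $B \to \Omega$ sending $r \mapsto 1$ and everything else to $0$ (which is a morphism of $\cat{B}$ since $\cat{B}$ is \emph{full} in $\FinSet$ --- this is the one place fullness is needed) yields $I_{\Omega}(\chi_{f^{-1}(r)}) = 1$, so $f^{-1}(r) \in \uf{U}$; since $f$ is simple, $r$ is the \emph{unique} such element, hence $r = \int_X f \dee\uf{U}$. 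Thus the components are bijections. Finally, I would check that $U \to T^{\cat{B}}$ respects units and multiplications: for the unit, the principal ultrafilter $\uf{U}_x$ integrates $f$ to $f(x)$, which is exactly the unit of the codensity monad (evaluation at $x$); for the multiplication, rather than compute directly I would note that a morphism of monads whose underlying natural transformation is a pointwise iso automatically has inverse a morphism of monads, so it is enough to know $U \to T^{\cat{B}}$ is \emph{some} morphism of monads --- and since $U$ is the \emph{terminal} finite-coproduct-preserving monad (Corollary~\ref{cor:Boerger-monad}) while $T^{\cat{B}}$ preserves finite coproducts (it restricts to the identity on $\cat{B} \ni 0, 1, 2$, or more directly because each $[X,\dashbk]$ does and ends commute with the relevant colimits), one could alternatively derive the monad-morphism property from terminality. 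I expect the surjectivity step, and in particular the careful bookkeeping of which naturality squares to apply to extract the partition condition, to be the main obstacle; everything else is formal or already contained in the preceding lemmas.
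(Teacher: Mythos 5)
Your strategy coincides with the paper's (the same transformation $\uf{U} \mapsto \int_X \idashbk \dee\uf{U}$, the same definition $\uf{U} = \{Y \sub X : I_\Omega(\chi_Y) = 1\}$, the same final identification of $I$ with integration), but as written the surjectivity step has a hole. From a three-block partition $X = Y_1 \amalg Y_2 \amalg Y_3$ you assert that $I_\Omega(f) = a_j$ for some $j$, where $f$ takes only the values $a_1, a_2, a_3$ in $\Omega$. A priori $I_\Omega(f)$ is just an element of $\Omega$; if it lay outside $\{a_1,a_2,a_3\}$ then \emph{no} block would lie in $\uf{U}$ and Corollary~\ref{cor:uf-3} could not be applied. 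Nothing you say rules this out, and the containment is not free: compare Section~\ref{sec:rigs}, where the condition $I(f) \in \im(f)$ has to be \emph{imposed} on an $R$-valued integral. The fix is one line but needs saying: choose an idempotent $e \from \Omega \to \Omega$ with image $\{a_1,a_2,a_3\}$; then $e \of f = f$, so the naturality square~\eqref{eq:int-nat-square} gives $I_\Omega(f) = e(I_\Omega(f)) \in \{a_1,a_2,a_3\}$. (Relatedly, your test maps must be taken as endomorphisms of $\Omega$, e.g.\ $\chi_{\{a_i\}} \from \Omega \to \Omega$, since $\{a_1,a_2,a_3\}$ and $\{0,1\}$ need not be objects of $\cat{B}$.) The paper sidesteps the issue by using partitions indexed by \emph{all} of a set $B \in \cat{B}$ with at least three elements and invoking Proposition~\ref{propn:uf-concise}(\ref{part:uf-concise-single}): there every block is a fibre $f^{-1}(b)$, and the equivalence $f^{-1}(b) \in \uf{U} \iff b = I(f)$ yields exactly one block in $\uf{U}$ with no image condition needed.

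Your treatment of the monad structure also misfires in its stated form. Terminality of $\mnd{U}$ among finite-coproduct-preserving monads (Corollary~\ref{cor:Boerger-monad}) produces a unique monad map \emph{into} $\mnd{U}$, i.e.\ $\mnd{T}^\cat{B} \to \mnd{U}$; it cannot by itself certify that your map $U \to T^\cat{B}$ is a monad morphism. Moreover your reason for $T^\cat{B}$ preserving finite coproducts assumes $0, 1, 2 \in \cat{B}$, which the hypotheses do not give ($\cat{B}$ could consist of a single three-element set, as in Corollary~\ref{cor:lawvere}), and limits do not commute with the relevant colimits in general. The paper's use of B\"orger is different: once $U \to T^\cat{B}$ is a natural isomorphism of endofunctors, transport the monad structure of $T^\cat{B}$ onto $U$ and invoke the \emph{uniqueness} of the monad structure on the endofunctor $U$; no unit or multiplication check is needed at all. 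Alternatively, the unit verification you did perform is exactly what Proposition~\ref{propn:mult-immutable}(\ref{part:immutable-iso}) exploits (the route of Remark~\ref{rmk:monad-strategies-vs}): a natural isomorphism compatible with units, into the codensity monad of a full and faithful inclusion, is automatically an isomorphism of monads. With either repair, and with the idempotent argument above, your proof is sound; the injectivity step and the closing identification $I = \int_X \idashbk \dee\uf{U}$ are fine as you have them.
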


\begin{proof}
We show that the natural transformation $U \to T^\cat{B}$ of
Proposition~\ref{propn:uf-transf} is a natural isomorphism.  Then by
Corollary~\ref{cor:Boerger-monad}, it is an isomorphism of monads.

Let $X$ be a set and $I \in T^\cat{B}(X)$.  We must show that there is a
unique ultrafilter $\uf{U}$ on $X$ such that $I = \int_X \idashbk
\dee\uf{U}$.  Choose a set $\Omega \in \cat{B}$ with at least two elements,
say $0$ and $1$, and whenever $Y \sub X$, define $\chi_Y$ as
in~\eqref{eq:char-fn-defn}. 

Uniqueness follows from~\eqref{eq:recover-uf}.  For existence, put $\uf{U}
= \{ Y \sub X \such I(\chi_Y) = 1 \}$.  Whenever $B$ is a set in $\cat{B}$
and $f\from X \to B$ is a function, $I(f)$ is the unique element of $B$
satisfying $f^{-1}(I(f)) \in \uf{U}$: for given $b \in B$, we have
\begin{align*}
f^{-1}(b) \in \uf{U}    &
\iff
I(\chi_{f^{-1}(b)}) = 1
\iff
I(\chi_{\{b\}} \of f) = 1
\iff
\chi_{\{b\}}(I(f)) = 1  \\
        &
\iff
b = I(f),
\end{align*}
where the penultimate step is by~\eqref{eq:int-nat-square}.  Applying this
when $B$ is a set in $\cat{B}$ with at least three elements proves that
$\uf{U}$ is an ultrafilter, by
Proposition~\ref{propn:uf-concise}(\ref{part:uf-concise-single}).
Moreover, since $f^{-1}(I(f)) \in \uf{U}$ for any $f$, we have $I = \int
\idashbk \dee\uf{U}$, as required.  
\done
\end{proof}

\begin{remark}  \label{rmk:monad-strategies-set}
In this proof, we used B\"orger's Corollary~\ref{cor:Boerger-monad} 
as a labour-saving device; it excused us from checking that the constructed
isomorphism $U \to T^\cat{B}$ preserves the monad structure.  We could
also have checked this directly.  Remark~\ref{rmk:monad-strategies-vs}
describes a third method.
\end{remark}

\begin{remark}
The condition that $\cat{B}$ contains at least one set with at least three
elements is sharp.  There are $2^3 = 8$ full subcategories $\cat{B}$ of
$\Set$ containing only sets of cardinality $0$, $1$ or $2$, and in no case
is $\mnd{T}^\cat{B}$ isomorphic to the ultrafilter monad.  If $2 \not\in
\cat{B}$ then $T^\cat{B}(X) = 1$ for all nonempty $X$.  If $2 \in \cat{B}$
then $T^\cat{B}(X)$ is canonically isomorphic to the set of all $\uf{U}
\sub P(X)$ satisfying the partition condition of
Proposition~\ref{propn:uf-concise} for $n \in \{1, 2\}$.  In that case,
$U(X) \sub T^\cat{B}(X)$, but by the example after Corollary~\ref{cor:uf-3},
the inclusion is in general strict.
\end{remark}

We immediately deduce an important result from~\cite{KeGi}:

\begin{cor}[Kennison and Gildenhuys]      
The codensity monad of $\FinSet \incl \Set$ is the ultrafilter monad.
\done
\end{cor}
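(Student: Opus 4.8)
The plan is to obtain this as the special case $\cat{B} = \FinSet$ of Theorem~\ref{thm:main}. That theorem shows that the codensity monad of $\cat{B} \incl \Set$ is isomorphic to the ultrafilter monad for \emph{every} full subcategory $\cat{B}$ of $\FinSet$ containing a set with at least three elements. Since $\FinSet$ is trivially a full subcategory of itself and contains, for instance, a three-element set, the hypothesis is satisfied and the conclusion is immediate.

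For a reader who wants the argument unwound in this case, I would retrace the proof of Theorem~\ref{thm:main} with $\cat{B} = \FinSet$, so that $T^\cat{B}$ becomes the codensity endofunctor $T$ of $\FinSet \incl \Set$. First, Proposition~\ref{propn:uf-transf} supplies a natural transformation $U \to T$ sending an ultrafilter $\uf{U}$ on $X$ to the integration operator $\int_X \idashbk \dee\uf{U}$; the only content here is the naturality of integration of simple functions in the finite codomain and in the domain set, which is Lemma~\ref{lemma:uf-int-nat}. Second, I would check that each component is a bijection: it is injective because $\uf{U}$ is recovered from its integration operator by the formula $\uf{U} = \{ Y \sub X \such \int_X \chi_Y \dee\uf{U} = 1 \}$, as in~\eqref{eq:recover-uf}, and it is surjective because, given any $I \in T(X)$, the family $\uf{U} = \{ Y \sub X \such I(\chi_Y) = 1 \}$ is an ultrafilter with $I = \int_X \idashbk \dee\uf{U}$. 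Third, to upgrade this natural isomorphism of endofunctors to an isomorphism of monads, I would invoke B\"orger's Corollary~\ref{cor:Boerger-monad}: since $U$ is the terminal finite-coproduct-preserving monad on $\Set$, any isomorphism of endofunctors onto $U$ automatically respects the monad structure. (This is the labour-saving device recorded in Remark~\ref{rmk:monad-strategies-set}.)

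The one genuinely substantial ingredient --- and the step I would expect to be the obstacle in a from-scratch proof --- is showing that the family $\uf{U}$ built from an integration operator really is an ultrafilter. Naturality of $I \in T(X)$ only constrains its behaviour on maps into \emph{finite} sets, so the full filter axioms are not directly visible; the point is that testing $I$ against functions into a fixed three-element set of $\cat{B}$ shows precisely that $\uf{U}$ satisfies the cardinality-three partition condition, after which Proposition~\ref{propn:uf-concise} (the Galvin--Horn characterization) finishes the job. With that proposition available, the present corollary is a one-line specialization of Theorem~\ref{thm:main}.
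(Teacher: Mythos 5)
Your proposal is correct and matches the paper exactly: the paper deduces this corollary immediately from Theorem~\ref{thm:main}, since $\FinSet$ is a full subcategory of itself containing a three-element set, and your optional unwinding faithfully reproduces the proof of that theorem (including the use of Corollary~\ref{cor:Boerger-monad} to upgrade to a monad isomorphism and the Galvin--Horn partition criterion for surjectivity). Nothing further is needed.
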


We can also deduce an unpublished result stated by Lawvere in
2000~\cite{LawvTTL}.  (See also~\cite{nLabUF}.)  It does not mention
codensity explicitly.  Write $\End(B)$ for the endomorphism monoid of a set
$B$, and $\Set^{\End(B)}$ for the category of left $\End(B)$-sets.  Given a
set $X$, equip $[X, B]$ with the natural left action by $\End(B)$.

\begin{cor}[Lawvere]    \label{cor:lawvere}
Let $B$ be a finite set with at least three elements.  Then
\[
\Set^{\End(B)} ([X, B], B) \iso U(X)
\]
naturally in $X \in \Set$.
\end{cor}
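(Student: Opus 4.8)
The plan is to show that a natural transformation $[X,B]\to B$ of $\End(B)$-sets is exactly the same thing as a component at $B$ of an element of $T^{\cat{B}}(X)$, where $\cat{B}$ is the one-object full subcategory of $\FinSet$ on $B$; then Theorem~\ref{thm:main} finishes the job. First I would recall that a morphism of left $\End(B)$-sets $\varphi\from [X,B]\to B$ is precisely a function satisfying $\varphi(\theta\of f)=\theta(\varphi(f))$ for all $\theta\in\End(B)$ and $f\from X\to B$. Comparing with~\eqref{eq:int-nat-square}, this is exactly the naturality-square condition defining an element $I\in T^{\cat{B}}(X)$ in the case where $\cat{B}$ is the full subcategory of $\FinSet$ with the single object $B$ (so $I$ has a single component $I_B=\varphi$, and the only maps to worry about are the endomorphisms of $B$). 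Hence
\[
\Set^{\End(B)}([X,B],B)\;=\;T^{\cat{B}}(X)
\]
on the nose, and this identification is visibly natural in $X$ (both sides are acted on by precomposition along maps $X'\to X$, compatibly).

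Next I would invoke Theorem~\ref{thm:main}: since $B$ has at least three elements, the full subcategory $\cat{B}$ of $\FinSet$ on the single object $B$ contains a set with at least three elements, so the codensity monad $\mnd{T}^{\cat{B}}$ of $\cat{B}\incl\Set$ is isomorphic to the ultrafilter monad $\mnd{U}$. In particular $T^{\cat{B}}(X)\iso U(X)$ naturally in $X$. Chaining the two identifications gives $\Set^{\End(B)}([X,B],B)\iso U(X)$ naturally in $X$, as required.

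The only genuine point to check carefully is that Theorem~\ref{thm:main} really does apply to this tiny $\cat{B}$: the theorem is stated for a full subcategory of $\FinSet$ containing at least one set with at least three elements, and a one-object full subcategory is still a full subcategory of $\FinSet$, with $B$ itself being the required large set. (One should note that $\cat{B}$ having a single object does not collapse anything: $\cat{B}(B,B)=\End(B)$, and the defining limit for $T^{\cat{B}}(X)$ is taken over the category of elements of $[X,\dashbk]$ restricted to $\cat{B}$, which is exactly the data of an $\End(B)$-equivariant map $[X,B]\to B$.) I would also remark, to make the $\Simp$-versus-$[X,\dashbk]$ bookkeeping transparent, that $\Simp(X,B)=[X,B]$ for every finite $B$, as already observed in the text, so no subtlety about simple functions intrudes.

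I do not expect a serious obstacle here: the corollary is essentially a translation, unwinding the definition of $T^{\cat{B}}$ in the special case of a one-object subcategory and observing it coincides with the equivariant-maps set. The mild care needed is purely notational---keeping straight that the codensity monad $\mnd{T}^{\cat{B}}$ is that of $\cat{B}\incl\Set$ (as the paper's convention dictates), not $\cat{B}\incl\FinSet$---and confirming that the naturality in $X$ matches on both sides, which it does because both the equivariant-hom functor and $T^{\cat{B}}$ send a map $X'\to X$ to precomposition.
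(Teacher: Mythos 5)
Your proposal is correct and is essentially the paper's own argument: take $\cat{B}$ to be the one-object full subcategory on $B$, identify $T^{\cat{B}}(X)$ with $\Set^{\End(B)}([X,B],B)$, and apply Theorem~\ref{thm:main}. The extra care you take about naturality in $X$ and about the codensity monad being that of $\cat{B}\incl\Set$ is sound but not a departure from the paper's route.
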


\begin{proof}
Let $\cat{B}$ be the full subcategory of $\Set$ consisting of the single
object $B$.  Then $T^\cat{B}(X) = \Set^{\End(B)}([X, B], B)$, and the result
follows from Theorem~\ref{thm:main}.
\done
\end{proof}

For example, let $3$ denote the three-element set; then an ultrafilter on $X$
amounts to a map $3^X \to 3$ respecting the natural action of the
27-element monoid $\End(3)$.

We have exploited the idea that an ultrafilter on a set $X$ is a primitive
sort of probability measure on $X$.  But there are monads other than
$\mnd{U}$, in other settings, that assign to a space $X$ some space of
measures on $X$: for instance, there are those of Giry~\cite{Giry} and
Lucyshyn-Wright~\cite{Lucy}.  It may be worth investigating whether they,
too, arise canonically as codensity monads.

\section{Integration of functions taking values in a rig}
\label{sec:rigs}

Integration of the most familiar kind involves integrands taking values in
the ring $\reals$ and an integration operator that is $\reals$-linear.  So
far, the codomains of our integrands have been mere sets.  However, we can
say more when the codomain has algebraic structure.  The resulting theory
sheds light on the relationship between integration as classically
understood and integration against an ultrafilter.

Let $R$ be a rig (semiring).  To avoid complications, we take all rigs to
be commutative.  Since $R$ has elements $0$ and $1$, we may define the
characteristic function $\chi_Y\from X \to R$ of any subset $Y$ of a set
$X$, as in equation~\eqref{eq:char-fn-defn}.

In analysis, a function on a measure space $X$ is called simple if it is
a finite linear combination of characteristic functions of measurable
subsets of $X$.  The following lemma justifies our own use of the word.

\begin{lemma}   \label{lemma:simple}
A function from a set $X$ to a rig $R$ is simple if and only if it is a
finite $R$-linear combination of characteristic functions of subsets of
$X$. 
\done
\end{lemma}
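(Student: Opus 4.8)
The plan is to prove both implications by unwinding the definition: a function $f\from X \to R$ is simple precisely when its image $\im(f)$ is finite. For the ``if'' direction, suppose $f = \sum_{i=1}^n r_i \chi_{Y_i}$ with $r_i \in R$ and $Y_i \sub X$. Then every value $f(x)$ is a sum of a subset of $\{r_1, \ldots, r_n\}$ (namely, those $r_i$ for which $x \in Y_i$), so $\im(f)$ has at most $2^n$ elements; in particular it is finite, and $f$ is simple. This direction is essentially immediate.

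The ``only if'' direction is the substantive one. Suppose $f$ is simple, so $\im(f) = \{s_1, \ldots, s_m\}$ is finite, with the $s_j$ distinct. Put $Y_j = f^{-1}(s_j)$, so that $X = Y_1 \amalg \cdots \amalg Y_m$ is a partition. First I would handle the case where $X = \emptyset$ (then $f$ is the empty sum, i.e.\ the constant function with no values, which is vacuously a finite $R$-linear combination) or where $R$ is the trivial rig $\{0\}$ (every function is $0 = 0\cdot\chi_\emptyset$); having disposed of these, assume $m \geq 1$. The natural guess is $f = \sum_{j=1}^m s_j \chi_{Y_j}$, and this works without any hypothesis: for $x \in X$, there is a unique $j$ with $x \in Y_j$, and then $\sum_k s_k \chi_{Y_k}(x) = s_j \cdot 1 + \sum_{k\ne j} s_k \cdot 0 = s_j = f(x)$, using that the $Y_k$ are pairwise disjoint with union $X$. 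So $f$ is a finite $R$-linear combination of characteristic functions of (disjoint) subsets of $X$, as required.

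The main obstacle---such as it is---is bookkeeping around degenerate cases: the empty set, the empty image (which forces $X = \emptyset$), and ensuring the displayed identity $\sum_j s_j\chi_{Y_j}(x) = f(x)$ is checked at each point $x$ rather than asserted globally. No properties of $R$ beyond having distinguished elements $0, 1$ and being a (commutative) rig are needed; in fact the partition $X = \coprod_j f^{-1}(s_j)$ does all the work, and the result would hold with ``rig'' replaced by any pointed set with a zero element, as long as one knows which coefficient to attach. I would present the argument in the order: (1) ``if'' via the $2^n$-bound on the image; (2) ``only if'' by exhibiting the explicit fibrewise combination $f = \sum_j s_j \chi_{f^{-1}(s_j)}$ and verifying it pointwise, after clearing out the empty-$X$ case.
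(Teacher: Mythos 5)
Your argument is correct, and it is exactly the standard argument the paper leaves implicit: the lemma is stated with no proof (the paper treats it as immediate), and your two directions---the $2^n$ bound on the image of $\sum_i r_i \chi_{Y_i}$, and the fibrewise decomposition $f = \sum_j s_j \chi_{f^{-1}(s_j)}$ verified pointwise on the partition of $X$ into fibres---are precisely how one would fill it in. The only caveat is your closing aside that a rig could be replaced by ``any pointed set with a zero element'': forming $R$-linear combinations genuinely uses the additive and multiplicative structure, so that remark should be dropped or qualified, but it does not affect the proof itself.
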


Integration against an ultrafilter is automatically linear:

\begin{lemma}   \label{lemma:uf-int-linear}
Let $X$ be a set, $\uf{U}$ an ultrafilter on $X$, and $R$ a rig.  Then the
map $\int_X \idashbk \dee\uf{U} \from \Simp(X, R) \to R$ is $R$-linear.
\end{lemma}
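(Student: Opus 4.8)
The plan is to reduce $R$-linearity of $\int_X \idashbk \dee\uf{U}$ to two facts we have already established: the uniqueness clause of Proposition~\ref{propn:uf-int-unique} and the naturality of integration in its codomain (Lemma~\ref{lemma:uf-int-nat}(\ref{part:uf-int-nat-cod})). Recall that $R$-linearity of a map $\Simp(X, R) \to R$ means it preserves the rig module structure: it sends the constant-$0$ function to $0$, it preserves addition, and it commutes with scalar multiplication by any $r \in R$. Since $\Simp(X, R)$ is an $R$-module and its module operations are all defined pointwise, each of these operations on $\Simp(X, R)$ is induced by an operation on $R$ itself applied pointwise --- for instance, addition of simple functions $f, g$ is $\bl + \bl$ composed with the simple function $(f, g) \from X \to R \times R$, and scalar multiplication by $r$ is the map $R \to R$, $s \mapsto rs$, postcomposed.

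The key observation is that each generating module operation is, or factors through, a \emph{finitary} operation on $R$, i.e.\ a map $R^n \to R$; but we have to be slightly careful, because Lemma~\ref{lemma:uf-int-nat}(\ref{part:uf-int-nat-cod}) is stated for a single fixed map $R \toby{\theta} S$ of finite sets, and the relevant $\theta$ here (addition $R \times R \to R$, or multiplication by $r$) may not have finite domain when $R$ is infinite. The fix is to restrict to the (finite) image. Given simple functions $f, g \from X \to R$, the function $(f, g) \from X \to R \times R$ is again simple, with finite image $B \sub R \times R$; let $B' \sub R$ be the (finite) image of $B$ under the addition map. Then we can invoke naturality of $\int$ with respect to the map $B \to B'$ induced by addition, exactly as in the proof of Lemma~\ref{lemma:uf-int-nat}(\ref{part:uf-int-nat-cod}), to conclude $\int_X (f + g)\dee\uf{U} = \int_X f\dee\uf{U} + \int_X g\dee\uf{U}$, reading off that the addition applied to the integrals of the two coordinate projections $B \to R$ gives the integral of their sum. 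Scalar multiplication by a fixed $r$ and the nullary operation $0$ are handled the same way (indeed $0$ already follows from Proposition~\ref{propn:uf-int-unique}(\ref{item:uf-int-const})).

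Alternatively --- and perhaps more cleanly --- one can argue directly from uniqueness: fix $f, g \in \Simp(X, R)$ and set $r = \int_X f\dee\uf{U}$, $s = \int_X g\dee\uf{U}$. Then $\Eq(f, r), \Eq(g, s) \in \uf{U}$, so their intersection $\{x : f(x) = r \text{ and } g(x) = s\}$ belongs to $\uf{U}$; on this set $f + g$ agrees with the constant $r + s$, so condition~(\ref{item:uf-int-zero}) followed by~(\ref{item:uf-int-const}) forces $\int_X (f+g)\dee\uf{U} = r + s$. The same one-line argument works for scalar multiples ($rf$ agrees with the constant $r\cdot(\int f\dee\uf{U})$ on $\Eq(f, \int f\dee\uf{U}) \in \uf{U}$) and for the zero function. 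I expect no real obstacle here: the only mild subtlety is making sure that when we invoke condition~(\ref{item:uf-int-zero}) we are comparing a simple function with a genuinely \emph{constant} function so that condition~(\ref{item:uf-int-const}) applies, and the fact that $\uf{U}$ is closed under finite intersection is what lets us pass from "$f$ and $g$ are each almost everywhere constant" to "$f + g$ is almost everywhere constant". This second route avoids fiddling with finite images altogether and is the one I would write up.
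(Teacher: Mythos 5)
Your proof is correct, but it takes a genuinely different route from the paper. The paper's proof is the categorical one: the integration maps form a natural transformation from $\Simp(X, \dashbk)$ to the identity on $\Set$, both of these functors preserve finite products, and since the theory of $R$-modules is a finite-product theory one may take internal $R$-modules throughout, obtaining an $R$-linear map $\int_X \idashbk \dee\uf{U} \from \Simp(X, M) \to M$ for \emph{every} $R$-module $M$; the lemma is the case $M = R$. Your first sketch is essentially the concrete unwinding of that argument (compatibility of the transformation with finitary operations on $R$), and your worry about the ``finite sets'' restriction is not a real obstacle: the fibre argument proving naturality never uses finiteness of the codomain, and your finite-image workaround also handles it. Your preferred second route is more elementary still: since $\int_X f \dee\uf{U}$ is characterized by $f^{-1}\bigl(\int_X f \dee\uf{U}\bigr) \in \uf{U}$, closure of $\uf{U}$ under finite intersections gives $(f+g)^{-1}(r+s) \supseteq f^{-1}(r) \cap g^{-1}(s) \in \uf{U}$ and $(rf)^{-1}\bigl(r \cdot \int_X f \dee\uf{U}\bigr) \supseteq f^{-1}\bigl(\int_X f \dee\uf{U}\bigr) \in \uf{U}$, which (together with upward closure and conditions~(i) and~(ii) of Proposition~\ref{propn:uf-int-unique}) forces additivity and homogeneity; the zero function is constant. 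What each approach buys: yours is self-contained, avoids any categorical machinery, and makes the role of finite intersections transparent; the paper's is shorter on the page, yields the stronger statement for arbitrary internal $R$-modules, and is the same device reused later for the vector-space analogue (Lemma~\ref{lemma:linearity-for-free}), so it fits the paper's overall theme that linearity comes ``for free'' from preservation of finite products.
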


Here, we are implicitly using the notion of a \demph{module} over a rig $R$,
which is an (additive) commutative monoid equipped with an action by $R$
satisfying the evident axioms.  In particular, $\Simp(X, R)$ is an $R$-module
with pointwise operations.

\begin{proof}
We have the natural transformation
\[
\xymatrix@C+10ex{
\Set
\rtwocell^{\Simp(X, \dashbk)}_{\id}
{\quad\quad \int\idashbk\dee\uf{U}}     &
\Set
}
\]
in which $\Set$ has finite products and both functors preserve finite
products.  The theory of $R$-modules is a finite product theory, so taking
internal $R$-modules throughout gives a natural transformation
\[
\xymatrix@C+10ex{
R\hyph\Mod
\rtwocell^{\Simp(X, \dashbk)}_{\id}%
{\quad\quad \int\idashbk\dee\uf{U}}     &
R\hyph\Mod
}
\]
This new functor $\Simp(X, \dashbk)$ sends an $R$-module $M$ to $\Simp(X,
M)$ with the pointwise $R$-module structure, and $\int \idashbk
\dee\uf{U}$ defines an $R$-linear map $\Simp(X, M) \to M$.  Applying this
to $M = R$ gives the result.
\done
\end{proof}

\begin{propn}
Let $X$ be a set, $\uf{U}$ an ultrafilter on $X$, and $R$ a rig.  Then
$\int_X \idashbk \dee\uf{U}$ is the unique $R$-linear map $\Simp(X, R) \to
R$ such that for all $Y \sub X$,
\[
\int_X \chi_Y \dee\uf{U}
=
\begin{cases}
1       &\text{if } Y \in \uf{U}        \\
0       &\text{otherwise}
\end{cases}
\]
(that is, $\int_X \chi_Y \dee\uf{U} = \mu_\uf{U}(Y)$).
\end{propn}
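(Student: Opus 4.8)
The plan is to prove existence and uniqueness separately, leaning on the results already established for the set-valued integral.

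For \emph{uniqueness}, suppose $\phi\from \Simp(X,R)\to R$ is any $R$-linear map satisfying $\phi(\chi_Y)=\mu_\uf{U}(Y)$ for all $Y\sub X$. By Lemma~\ref{lemma:simple}, every element of $\Simp(X,R)$ is a finite $R$-linear combination $\sum_j r_j\chi_{Y_j}$ of characteristic functions. Applying $R$-linearity of $\phi$ and then the hypothesis on the $\chi_{Y_j}$, we get $\phi\bigl(\sum_j r_j\chi_{Y_j}\bigr)=\sum_j r_j\mu_\uf{U}(Y_j)$, which is an expression depending only on $\uf{U}$ and the chosen representation, not on $\phi$. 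Hence any two such maps agree. (The only subtlety is that the representation as a linear combination is not unique, but this causes no problem: we are computing the common value of all candidate maps, not defining a map, so whichever representation we pick, all candidates must return the same thing on it.)

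For \emph{existence}, it suffices to check that $\int_X\idashbk\dee\uf{U}$ itself has the stated property. It is $R$-linear by Lemma~\ref{lemma:uf-int-linear}, and the computation $\int_X\chi_Y\dee\uf{U}=\mu_\uf{U}(Y)$ is exactly equation~\eqref{eq:char-fn-int} combined with Lemma~\ref{lemma:uf-prob} (which identifies $\mu_\uf{U}(Y)$ with $1$ when $Y\in\uf{U}$ and $0$ otherwise). So both required facts are already in hand, and existence is immediate.

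The main obstacle, such as it is, is purely bookkeeping: making the uniqueness argument watertight given that the expansion into characteristic functions is non-canonical. The clean way to phrase it is to say that $R$-linearity plus the prescribed values on the spanning set $\{\chi_Y : Y\sub X\}$ determine $\phi$ on the $R$-span of that set, and by Lemma~\ref{lemma:simple} that span is all of $\Simp(X,R)$; so there is at most one such $\phi$, and $\int_X\idashbk\dee\uf{U}$ is one, hence it is the unique one. No genuine difficulty arises beyond assembling the earlier lemmas in the right order.
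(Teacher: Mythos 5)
Your proof is correct and is essentially the paper's own argument: existence comes from Lemma~\ref{lemma:uf-int-linear} together with equation~\eqref{eq:char-fn-int}, and uniqueness from Lemma~\ref{lemma:simple}, since an $R$-linear map is determined by its values on the spanning set of characteristic functions. Your extra remark about the non-uniqueness of the representation is a fine clarification but does not change the substance.
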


\begin{proof}
We have already shown that $\int_X \idashbk \dee\uf{U}$ has the desired
properties (Lemma~\ref{lemma:uf-int-linear} and
equation~\eqref{eq:char-fn-int}).  Uniqueness follows from
Lemma~\ref{lemma:simple}.
\done
\end{proof}

Let $X$ be a set and $R$ a rig.  For any ultrafilter $\uf{U}$ on $X$, the
$R$-linear map $\int \idashbk \dee\uf{U} \from \Simp(X, R) \to R$ has the
property that $\int f \dee\uf{U}$ always belongs to $\im(f)$.  Abstracting,
let us define an \demph{$R$-valued integral on $X$} to be an $R$-linear map
$I\from \Simp(X, R) \to R$ such that $I(f) \in \im(f)$ for all $f \in
\Simp(X, R)$.

Our main result states that an ultrafilter on a set $X$ is essentially the
same thing as an $R$-valued integral on $X$, as long as the rig $R$ is
sufficiently nontrivial.  

\begin{thm}     \label{thm:rig-main}
Let $R$ be a rig in which $3 \neq 1$.  Then for any set $X$, there is a
canonical bijection
\[
U(X) \toby{\sim} \{ R \text{-valued integrals on } X \},
\]
defined by $\uf{U} \mapsto \int_X \idashbk \dee\uf{U}$.
\end{thm}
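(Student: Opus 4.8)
The plan is to prove that the map
\[
\Phi \from U(X) \to \{ R\text{-valued integrals on } X \},
\qquad
\uf{U} \mapsto \int_X \idashbk \dee\uf{U},
\]
is a well-defined bijection, natural in $X$. That $\Phi$ lands in the stated codomain is already in hand: $\int_X \idashbk \dee\uf{U}$ is $R$-linear by Lemma~\ref{lemma:uf-int-linear}, and $\int_X f \dee\uf{U} \in \im(f)$ by construction. Naturality in $X$ is Lemma~\ref{lemma:uf-int-nat}(\ref{part:uf-int-nat-dom}), exactly as in the proof of Proposition~\ref{propn:uf-transf} (here $X \mapsto \{R\text{-valued integrals on }X\}$ is made a functor by precomposition). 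The case $X = \emptyset$ can be disposed of first: there are no ultrafilters on $\emptyset$ and no $R$-valued integrals on $\emptyset$ (the empty function has empty image), so both sides are empty. Assume from now on $X \neq \emptyset$. I will also use at the outset that $3 \neq 1$ forces $0 \neq 1$ and $2 \neq 1$ in $R$: if $0 = 1$ then $3 = 0 = 1$, and if $2 = 1$ then $3 = 2 + 1 = 1 + 1 = 2 = 1$.

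Injectivity is the easy direction. For an ultrafilter $\uf{U}$, the proposition immediately preceding (equivalently, equation~\eqref{eq:char-fn-int} read with values in $R$) gives $\int_X \chi_Y \dee\uf{U} = \mu_\uf{U}(Y)$, which is $1$ if $Y \in \uf{U}$ and $0$ otherwise; since $0 \neq 1$ in $R$, this recovers $\uf{U} = \{ Y \sub X \such \int_X \chi_Y \dee\uf{U} = 1 \}$ from $\Phi(\uf{U})$.

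The heart of the argument is surjectivity. Given an $R$-valued integral $I \from \Simp(X, R) \to R$, I set $\uf{U} = \{ Y \sub X \such I(\chi_Y) = 1 \}$ and claim this is an ultrafilter with $\Phi(\uf{U}) = I$. The first observation is that $I(\chi_Y) \in \{0, 1\} \sub R$ for every $Y \sub X$: for proper nonempty $Y$ this holds because $\im(\chi_Y) \sub \{0, 1\}$; for $Y = X$ because $\chi_X$ is the constant function $1$, forcing $I(\chi_X) = 1$; and for $Y = \emptyset$ because $I(\chi_\emptyset) = I(0) = 0$. In particular, $Y \notin \uf{U}$ is equivalent to $I(\chi_Y) = 0$. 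The key step --- which I expect to be the main obstacle --- is to verify that $\uf{U}$ satisfies the three-part partition condition of Proposition~\ref{propn:uf-concise}(\ref{part:uf-concise-single}), for then $\uf{U}$ is an ultrafilter. Given a partition $X = Y_1 \amalg Y_2 \amalg Y_3$, we have $\chi_{Y_1} + \chi_{Y_2} + \chi_{Y_3} = \chi_X$ in the $R$-module $\Simp(X, R)$, so applying the $R$-linear map $I$ yields $I(\chi_{Y_1}) + I(\chi_{Y_2}) + I(\chi_{Y_3}) = I(\chi_X) = 1$. Each summand lies in $\{0, 1\}$, so if exactly $k$ of the three equal $1$, the sum equals $k \cdot 1$, whence $k = 1$ in $R$. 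As $k \in \{0, 1, 2, 3\}$ and $0, 2, 3$ are all distinct from $1$ in $R$, we get $k = 1$: exactly one of $Y_1, Y_2, Y_3$ belongs to $\uf{U}$. This is precisely where the hypothesis $3 \neq 1$ enters, and it is the Galvin--Horn characterization that makes the verification so short; the conceptual leap is to package $I$ as the $\{0,1\}$-valued function $Y \mapsto I(\chi_Y)$ and invoke the three-partition criterion rather than checking the filter axioms by hand.

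It remains to check $\Phi(\uf{U}) = I$, i.e.\ $\int_X \idashbk \dee\uf{U} = I$ as maps $\Simp(X, R) \to R$. Both are $R$-linear, and by Lemma~\ref{lemma:simple} every element of $\Simp(X, R)$ is an $R$-linear combination of characteristic functions $\chi_Y$, so it suffices that the two maps agree on each $\chi_Y$. But $\int_X \chi_Y \dee\uf{U} = \mu_\uf{U}(Y)$ by the preceding proposition, while $I(\chi_Y)$ equals $1$ if $Y \in \uf{U}$ and $0$ otherwise by the first observation of the surjectivity argument together with the definition of $\uf{U}$; these agree. Hence $\Phi(\uf{U}) = I$, and $\Phi$ is the desired natural bijection.
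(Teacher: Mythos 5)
Your proof is correct and follows essentially the same route as the paper: injectivity via recovering $\uf{U}$ from the values $\int_X \chi_Y \dee\uf{U}$, and surjectivity by setting $\uf{U} = \{Y \such I(\chi_Y) = 1\}$, using linearity on a three-part partition together with Corollary~\ref{cor:uf-3} to see that $\uf{U}$ is an ultrafilter, and then Lemma~\ref{lemma:simple} to identify $I$ with $\int_X \idashbk \dee\uf{U}$. The only differences are harmless elaborations the paper leaves implicit (deducing $0 \neq 1$ and $2 \neq 1$ from $3 \neq 1$, and the vacuous case $X = \emptyset$).
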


\begin{proof}
Injectivity follows from the equation
\[
\uf{U} = \biggl\{ Y \sub X \such \int_X \chi_Y \dee\uf{U} = 1 \biggr\}
\]
($\uf{U} \in U(X)$), which is itself a consequence of~\eqref{eq:char-fn-int}
and the fact that $0 \neq 1$ in $R$.

For surjectivity, let $I$ be an $R$-valued integral on $X$.  Put $\uf{U}
= \{ Y \sub X \such I(\chi_Y) = 1 \}$.  To show that $\uf{U}$ is an
ultrafilter, take a partition $X = Y_1 \amalg Y_2 \amalg Y_3$.  We have
\[
\sum_{i = 1}^3 I(\chi_{Y_i}) 
=
I \biggl( \sum_{i = 1}^3 \chi_{Y_i} \biggr)
=
I(1)
=
1
\]
where the `$1$' in $I(1)$ is the constant function and the last equality
follows from the fact that $I(1) \in \im(1)$.  On the other hand,
$I(\chi_{Y_i}) \in \im(\chi_{Y_i}) \sub \{ 0, 1\}$ for each $i \in \{1, 2,
3\}$, and $0 \neq 1$, $2 \neq 1$, $3 \neq 1$ in $R$, so $I(\chi_{Y_i}) = 1$
for exactly one value of $i \in \{1, 2, 3\}$.  By Corollary~\ref{cor:uf-3},
$\uf{U}$ is an ultrafilter.  Finally, $I = \int_X \idashbk \dee\uf{U}$: for
by linearity, it is enough to check this on characteristic functions, and
this follows from~\eqref{eq:char-fn-int} and the definition of $\uf{U}$.
\done
\end{proof}

\section{Codensity monads as Kan extensions}
\label{sec:kan}

The only ultrafilters on a finite set $B$ are the principal ultrafilters;
hence $U(B) \iso B$.  We prove that $\mnd{U}$ is the universal monad on
$\Set$ with this property.  For the proof, we first need to review some
standard material on codensity, largely covered in early papers such as
\cite{ApTi}, \cite{KockCYR} and~\cite{LintOFS}.  

So far, we have only considered codensity monads for functors whose domain
is essentially small and whose codomain is complete.  We now relax those
hypotheses.  An arbitrary functor $G \from \cat{B} \to \cat{A}$ \demph{has
a codensity monad} if for each $A \in \cat{A}$, the end
\begin{equation}        \label{eq:coden-end}
\int_{B \in \cat{B}} [\cat{A}(A, G(B)),\, G(B)]
\end{equation}
exists.  In that case, we write $T^G(A)$ for this end, so that $T^G$ is a
functor $\cat{A} \to \cat{A}$.  As the end formula reveals, $T^G$ together
with the canonical natural transformation
\begin{equation}        \label{eq:coden-can-transf}
\begin{centerdiag}
\xymatrix@C-2ex@R-2ex{
\cat{B} \ar[rr]^G \ar[rrdd]_G   &       &
\cat{A} \ar[dd]^{T^G} \ar@{}[ld]|-{\swnt\kappa^G}    
\\
&\ &
\\
        &&
\cat{A}
}
\end{centerdiag}
\end{equation}
is the right Kan extension of $G$ along itself.  

It will be convenient to phrase the universal property of the Kan extension
in the following way.  Let $\fmc{G}$ be the category whose objects are
pairs $(S, \sigma)$ of the type
\[
\xymatrix@C-2ex@R-2ex{
\cat{B} \ar[rr]^G \ar[rrdd]_G   &       &
\cat{A} \ar[dd]^S \ar@{}[ld]|-{\swnt\sigma}    
\\
&\ &
\\
        &&
\cat{A}
}
\]
and whose maps $(S', \sigma') \to (S, \sigma)$ are natural transformations
$\theta\from S' \to S$ such that
\[
\begin{array}{c}
\xymatrix@C-1ex@R-1ex{
\cat{B} \ar[rr]^G \ar[rrdd]_G   &&
\cat{A} \ar@{}[ld]|-{\hspace{-1.5em}\swnt\sigma} 
\ddtwocell_S^{S'}{\theta} \\
&\ &
\\
        &&
\cat{A}
}
\end{array}
=
\begin{array}{c}
\xymatrix@C-1ex@R-1ex{
\cat{B} \ar[rr]^G \ar[rrdd]_G   &&
\cat{A} \ar@/^.8pc/[dd]^{S'} \ar@{}[ld]|-{\swnt\sigma'}
\\
&\ &
\\
        &&
\cat{A}.
}
\end{array}
\]
The universal property of $(T^G, \kappa^G)$ is that it is the terminal
object of $\fmc{G}$.

The category $\fmc{G}$ is monoidal under composition.  Being the terminal
object of a monoidal category, $(T^G, \kappa^G)$ has a unique monoid
structure.  This gives $T^G$ the structure of a monad, the \demph{codensity
  monad} of $G$, which we write as $\mnd{T}^G = (T^G, \eta^G, \mu^G)$.
When $\cat{B}$ is essentially small and $\cat{A}$ is complete, this agrees
with the definition in Section~\ref{sec:codensity}.

\begin{example}
Let $\Ring$ be the category of commutative rings, $\Field$ the full
subcategory of fields, and $G\from \Field \incl \Ring$ the inclusion.
Since $\Field$ is not essentially small, it is not instantly clear that $G$
has a codensity monad.  We show now that it does.

Let $A$ be a ring.  Write $A/\Field$ for the comma category in which an
object is a field $k$ together with a homomorphism $A \to k$.
There is a composite forgetful functor
\[
A/\Field \to \Field \incl \Ring,
\]
and the end~\eqref{eq:coden-end}, if it exists, is its limit.  The
connected-components of $A/\Field$ are in natural bijection with the prime
ideals of $A$ (by taking kernels).  Moreover, each component has an initial
object: in the component corresponding to the prime ideal $\ideal{p}$, the
initial object is the composite homomorphism
\[
A \epic A/\ideal{p} \incl \Frac(A/\ideal{p}),
\]
where $\Frac(\dashbk)$ means field of fractions.  Hence the end (or
limit) exists, and it is
\[
T^G(A)
=
\prod_{\ideal{p} \in \Spec(A)} \Frac(A/\ideal{p}).
\]
The unit homomorphism $\eta^G_A \from A \to T^G(A)$ is algebraically
significant: its kernel is the nilradical of $A$, and its image is,
therefore, the free reduced ring on $A$ (\cite{Mats}, Section~1.1).  In
particular, this construction shows that a ring can be embedded into a
product of fields if and only if it has no nonzero nilpotents.  On the
geometric side, $\Spec(T^G(A))$ is the Stone--\v{C}ech compactification of
the discrete space $\Spec(A)$.

For example, 
\[
T^G(\integers) 
= 
\rationals \times 
\prod_{\text{primes } p > 0} \integers/p\integers
\]
(the product of one copy each of the prime fields), and for positive
integers $n$,
\[
T^G(\integers/n\integers) = \integers/\text{rad}(n)\integers
\]
where $\text{rad}(n)$ is the radical of $n$, that is, the product of its
distinct prime factors.
\end{example}

Now consider the case where the functor $G$ is the inclusion of a full
subcategory $\cat{B} \sub \cat{A}$.  Let us say that a monad $\mnd{S} = (S,
\eta^S, \mu^S)$ on $\cat{A}$ \demph{restricts to the identity on $\cat{B}$}
if $\eta^S_B\from B \to S(B)$ is an isomorphism for all $B \in \cat{B}$, or
equivalently if the natural transformation $\eta^S G \from G \to SG$ is an
isomorphism.
When this is so, $(S, (\eta^S G)^{-1})$ is an object of the monoidal
category $\fmc{G}$, and by a straightforward calculation, $((S, (\eta^S
G)^{-1}), \eta^S, \mu^S)$ is a monoid in $\fmc{G}$.  For notational
simplicity, we write this monoid as $(\mnd{S}, (\eta^S G)^{-1})$.

Since $G$ is full and faithful, the natural transformation $\kappa^G$ is an
isomorphism.  But
\[
\begin{array}{c}
\xymatrix@C-1ex@R-1ex{
\cat{B} \ar[rr]^G \ar[rrdd]_G   &&
\cat{A} \ar@{}[ld]|-{\hspace{-1.5em}\swnt\kappa^G} 
\ddtwocell_{T^G}^{1}{\eta^G} \\
&\ &
\\
        &&
\cat{A}
}
\end{array}
=
\begin{array}{c}
\xymatrix@C-1ex@R-1ex{
\cat{B} \ar[rr]^G \ar[rrdd]_G   &&
\cat{A} \ar@/^.8pc/[dd]^{1} \ar@{}[ld]|-{\swnt\id}
\\
&\ &
\\
        &&
\cat{A},
}
\end{array}
\]
so $\eta^G G$ is an isomorphism; that is, $\mnd{T}^G$ restricts to the
identity on $\cat{B}$.  (For example, the set of ultrafilters on a
finite set $B$ is isomorphic to $B$.)  Note that $\kappa^G = (\eta^G
G)^{-1}$.  Also, $(T^G, \kappa^G)$ is the terminal object of $\fmc{G}$, so
$(\mnd{T}^G, \kappa^G)$ is the terminal monoid in $\fmc{G}$.  The following
technical lemma will be useful.

\begin{lemma}   \label{lemma:restricts}
Let $\cat{B}$ be a full subcategory of a category $\cat{A}$, such that the
inclusion functor $G \from \cat{B} \incl \cat{A}$ has a codensity monad.
Let $\mnd{S} = (S, \eta^S, \mu^S)$ be a monad on $\cat{A}$ restricting to
the identity on $\cat{B}$.  For a natural transformation $\alpha \from S
\to T^G$, the following are equivalent:
\begin{enumerate}
\item   \label{part:restricts-monoids}
$\alpha$ is a map $(\mnd{S}, (\eta^S G)^{-1}) \to (\mnd{T}^G, \kappa^G)$ of
monoids in $\fmc{G}$

\item   \label{part:restricts-monads}
$\alpha$ is a map $\mnd{S} \to \mnd{T}^G$ of monads

\item   \label{part:restricts-units}
$\alpha \of \eta^S = \eta^G$.
\end{enumerate}
\end{lemma}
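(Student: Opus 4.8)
The three conditions live in a diagram of categories and functors, and the key observation is that the monoidal category $\fmc{G}$ is designed precisely so that a map of monoids in it is the same data as a map of monads that is also compatible with the structural isomorphisms to $G$. So my plan is to prove the cycle $(\ref{part:restricts-monoids}) \implies (\ref{part:restricts-monads}) \implies (\ref{part:restricts-units}) \implies (\ref{part:restricts-monoids})$, leaning on fullness and faithfulness of $G$ at the one genuinely substantive point.

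First, $(\ref{part:restricts-monoids}) \implies (\ref{part:restricts-monads})$: a monoid map in $\fmc{G}$ is in particular a natural transformation $\alpha \from S \to T^G$, and unwinding the definition of the monoidal structure on $\fmc{G}$ (composition of the underlying endofunctors, with the obvious unit and multiplication inherited from the ambient $2$-category of endofunctors of $\cat{A}$), the monoid-map axioms become exactly $\alpha \of \eta^S = \eta^G$ and $\alpha \of \mu^S = \mu^G \of (\alpha * \alpha)$ — i.e.\ $\alpha$ is a map of monads. The only thing to check is that the "$\fmc{G}$-part" of the monoid-map axiom (compatibility of $\alpha$ with $(\eta^S G)^{-1}$ and $\kappa^G$) does not impose anything extra; but this just says $\alpha G \of (\eta^S G)^{-1} = \kappa^G$, which, since $\kappa^G = (\eta^G G)^{-1}$, is equivalent to $\alpha G \of \eta^G G = \eta^S G$, a whiskered instance of $\alpha \of \eta^S = \eta^G$ and hence already implied.

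Next, $(\ref{part:restricts-monads}) \implies (\ref{part:restricts-units})$ is immediate: a map of monads commutes with units by definition. The real content is $(\ref{part:restricts-units}) \implies (\ref{part:restricts-monoids})$, and this is where I expect the main obstacle. Assuming only $\alpha \of \eta^S = \eta^G$, I must produce a monoid map in $\fmc{G}$. The trick is that $(\mnd{T}^G, \kappa^G)$ is the \emph{terminal} monoid in $\fmc{G}$, so there is a \emph{unique} monoid map $(\mnd{S}, (\eta^S G)^{-1}) \to (\mnd{T}^G, \kappa^G)$; by what we just proved it is a monad map $\beta$ with $\beta \of \eta^S = \eta^G$. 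It then suffices to show $\alpha = \beta$. For this I compare $\alpha$ and $\beta$ as morphisms into the terminal object $(T^G, \kappa^G)$ of the category $\fmc{G}$ (not of monoids, just of objects-with-$2$-cells): I claim both $\alpha$ and $\beta$ are maps $(S, (\eta^S G)^{-1}) \to (T^G, \kappa^G)$ in $\fmc{G}$. Indeed, $\alpha$ being such a map means precisely $\alpha G \of (\eta^S G)^{-1} = \kappa^G = (\eta^G G)^{-1}$, i.e.\ $\eta^S G = \alpha G \of \eta^G G$, which is the whiskering of $(\ref{part:restricts-units})$; likewise for $\beta$. Since $(T^G, \kappa^G)$ is terminal in $\fmc{G}$, we get $\alpha = \beta$, so $\alpha$ is a monoid map, establishing $(\ref{part:restricts-monoids})$.

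The only delicate point — and the place where fullness and faithfulness of $G$ is doing work — is verifying that $(S, (\eta^S G)^{-1})$ really is an object of $\fmc{G}$ and that $\alpha$, $\beta$ are morphisms there; but this is exactly the "straightforward calculation" already flagged in the paragraph before the lemma, namely that $\mnd{S}$ restricting to the identity on $\cat{B}$ makes $(\eta^S G)^{-1}$ a legitimate structural $2$-cell and that the relevant pasting identities hold. Given that, the argument is essentially a terminality-squeeze: every condition forces $\alpha$ to be the unique canonical map into the terminal object, so they are all equivalent.
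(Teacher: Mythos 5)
Your proof is correct and takes essentially the same route as the paper's: the implications (i)$\implies$(ii)$\implies$(iii) are immediate, and for (iii)$\implies$(i) you use $\kappa^G = (\eta^G G)^{-1}$ to see that $\alpha$ is a map $(S, (\eta^S G)^{-1}) \to (T^G, \kappa^G)$ in $\fmc{G}$, then combine terminality of $(T^G, \kappa^G)$ in $\fmc{G}$ with terminality of the monoid $(\mnd{T}^G, \kappa^G)$ to identify $\alpha$ with the unique monoid map $\beta$, exactly as in the paper. One cosmetic slip: the compatibility condition in $\fmc{G}$ should read $\kappa^G \of \alpha G = (\eta^S G)^{-1}$, equivalently $\alpha G \of \eta^S G = \eta^G G$; the composites you display are written in the wrong order (they do not typecheck as stated), though the equivalence you actually rely on is the correct one.
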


\begin{proof}
The implications (\ref{part:restricts-monoids})$\implies$%
(\ref{part:restricts-monads})$\implies$(\ref{part:restricts-units}) are
trivial.  Assuming~(\ref{part:restricts-units}), the fact that $\kappa^G =
(\eta^G G)^{-1}$ implies that $\alpha$ is a map $(S, (\eta^S G)^{-1}) \to
(T^G, \kappa^G)$ in $\fmc{G}$; and $(T^G, \kappa^G)$ is terminal in
$\fmc{G}$, so $\alpha$ is the unique map of this type.  But also
$(\mnd{T}^G, \kappa^G)$ is the terminal monoid in $\fmc{G}$, so there is a
unique map of monoids $\beta\from (\mnd{S}, (\eta^S G)^{-1}) \to
(\mnd{T}^G, \kappa^G)$.  Then $\alpha = \beta$ by uniqueness of $\alpha$,
giving~(\ref{part:restricts-monoids}).  \done
\end{proof}

Given a monad, it is often possible to find another monad with the same
underlying endofunctor and the same unit, but a different multiplication.
(For example, consider monads $M \times \dashbk$ on $\Set$, where $M$ is a
monoid.)  The same is true of codensity monads in general, since by
Proposition~\ref{propn:adjt-como}, every monad can be constructed as a
codensity monad.  However, codensity monads \emph{of full and faithful
functors} have the special property that their multiplication is
immutable, as follows:

\begin{propn}   \label{propn:mult-immutable}
Let $G\from \cat{B} \to \cat{A}$ be a full and faithful functor that has a
codensity monad.  Let $\mnd{S} = (S, \eta^S, \mu^S)$ be a monad on
$\cat{A}$.  Then:
\begin{enumerate}
\item   \label{part:immutable-iso}
Any natural isomorphism $\alpha\from S \to T^G$ satisfying $\alpha \of
\eta^S = \eta^G$ is an isomorphism of monads.
\item   \label{part:immutable-id}
If $S = T^G$ and $\eta^S = \eta^G$ then $\mu^S = \mu^G$.  
\end{enumerate}
\end{propn}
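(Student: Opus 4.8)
The plan is to deduce both parts from Lemma~\ref{lemma:restricts}. Although that lemma is stated for the inclusion of a full subcategory, its proof uses only that $G$ is full and faithful (via the identity $\kappa^G = (\eta^G G)^{-1}$ and the uniqueness of terminal objects and terminal monoids in $\fmc{G}$), so it will apply to our $G$; alternatively one may first replace $G$ by the inclusion of its essential image. The one hypothesis of the lemma not immediately at hand is that $\mnd{S}$ restricts to the identity on $\cat{B}$, i.e.\ that $\eta^S G$ is an isomorphism; once that is established, everything is formal.

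For~(\ref{part:immutable-iso}), suppose $\alpha\from S \to T^G$ is a natural isomorphism with $\alpha \of \eta^S = \eta^G$. I would first whisker this equation by $G$ to get $(\alpha G)\of(\eta^S G) = \eta^G G$, hence $\eta^S G = (\alpha G)^{-1}\of(\eta^G G)$. Since $G$ is full and faithful, $\kappa^G$ is invertible with $\kappa^G = (\eta^G G)^{-1}$, so $\eta^G G$ is an isomorphism; and $\alpha G$ is an isomorphism because $\alpha$ is. Hence $\eta^S G$ is an isomorphism, so Lemma~\ref{lemma:restricts} applies to $\mnd{S}$ and $\alpha$: its condition~(\ref{part:restricts-units}) is exactly the hypothesis $\alpha \of \eta^S = \eta^G$, so the lemma tells us $\alpha$ is a morphism of monads $\mnd{S} \to \mnd{T}^G$. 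Finally, a monad morphism whose underlying natural transformation is invertible has an inverse that is again a monad morphism (post- and pre-compose the unit and multiplication compatibility squares with $\alpha^{-1}$), so $\alpha$ is an isomorphism of monads.

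Part~(\ref{part:immutable-id}) will then be the case $\alpha = \id_{T^G}$ of~(\ref{part:immutable-iso}): under the hypotheses $S = T^G$ and $\eta^S = \eta^G$, the identity is a natural isomorphism $S \to T^G$ with $\id \of \eta^S = \eta^G$, hence an isomorphism of monads $\mnd{S} \to \mnd{T}^G$, and unwinding the statement that $\id$ preserves multiplication gives $\mu^S = \mu^G$. The only step with any content is the verification that $\eta^S G$ is invertible, and that is the sole place full-faithfulness of $G$ gets used (through invertibility of $\eta^G G$); I do not anticipate a genuine obstacle.
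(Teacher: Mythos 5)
Your proposal is correct and follows essentially the same route as the paper: reduce to the full-subcategory case, note that the hypotheses force $\eta^S G$ to be an isomorphism (via invertibility of $\eta^G G$), and then apply Lemma~\ref{lemma:restricts} with $\alpha$ an isomorphism or the identity. Your explicit verifications (whiskering by $G$, and that an invertible monad map is an isomorphism of monads) are just the details the paper leaves implicit.
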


\begin{proof}
We might as well assume that $G$ is the inclusion of a full subcategory.
Since $\mnd{T}^G$ restricts to the identity on $\cat{B}$, so does
$\mnd{S}$, under the hypotheses of either~(\ref{part:immutable-iso})
or~(\ref{part:immutable-id}).  Lemma~\ref{lemma:restricts} then gives both
parts, taking $\alpha$ to be an isomorphism or the identity, respectively.
\done
\end{proof}

Lemma~\ref{lemma:restricts} also implies:

\begin{propn}   \label{propn:restricts-terminal}
Let $\cat{B}$ be a full subcategory of a category $\cat{A}$, such that the
inclusion functor $G \from \cat{B} \incl \cat{A}$ has a codensity monad.
Then $\mnd{T}^G$ is the terminal monad on $\cat{A}$ restricting to the
identity on $\cat{B}$.
\end{propn}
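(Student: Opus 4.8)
The plan is to show two things: first, that $\mnd{T}^G$ itself restricts to the identity on $\cat{B}$; and second, that any monad $\mnd{S}$ on $\cat{A}$ which restricts to the identity on $\cat{B}$ admits a unique monad map $\mnd{S} \to \mnd{T}^G$. The first point has already been established in the discussion preceding Lemma~\ref{lemma:restricts}: since $G$ is full and faithful, the canonical transformation $\kappa^G$ exhibiting $(T^G, \kappa^G)$ as the right Kan extension of $G$ along itself is an isomorphism, and the pasting identity relating $\kappa^G$ and $\eta^G$ forces $\eta^G G$ to be an isomorphism with $\kappa^G = (\eta^G G)^{-1}$. So $\mnd{T}^G$ is indeed among the monads restricting to the identity on $\cat{B}$, and it only remains to prove terminality.

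For terminality, let $\mnd{S} = (S, \eta^S, \mu^S)$ be any monad on $\cat{A}$ restricting to the identity on $\cat{B}$. I want a unique monad map $\mnd{S} \to \mnd{T}^G$. Here is where Lemma~\ref{lemma:restricts} does all the work: by part~(\ref{part:restricts-monoids})$\Leftrightarrow$(\ref{part:restricts-monads}) of that lemma, monad maps $\mnd{S} \to \mnd{T}^G$ correspond exactly to maps of monoids $(\mnd{S}, (\eta^S G)^{-1}) \to (\mnd{T}^G, \kappa^G)$ in the monoidal category $\fmc{G}$. But $(\mnd{T}^G, \kappa^G)$ is the terminal monoid in $\fmc{G}$ --- this too was recorded in the text just before the lemma, as a consequence of $(T^G, \kappa^G)$ being the terminal object of $\fmc{G}$ and hence carrying a unique monoid structure, which is precisely $(\mnd{T}^G, \kappa^G)$. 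Therefore there is exactly one such monoid map, hence exactly one monad map $\mnd{S} \to \mnd{T}^G$, which is what terminality of $\mnd{T}^G$ among monads restricting to the identity on $\cat{B}$ demands.

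I do not expect a serious obstacle here, since the proposition is essentially a repackaging of facts already assembled: the identification of monad maps with monoid maps in $\fmc{G}$ (Lemma~\ref{lemma:restricts}) and the terminality of $(\mnd{T}^G, \kappa^G)$ as a monoid in $\fmc{G}$. The one point that deserves a sentence of care is the reduction to the case where $G$ is a genuine subcategory inclusion rather than merely full and faithful --- as in the proof of Proposition~\ref{propn:mult-immutable}, one replaces $\cat{B}$ by its essential image and checks that "restricts to the identity on $\cat{B}$" and "codensity monad of $G$" are unaffected up to the relevant isomorphisms; after that the argument above applies verbatim.
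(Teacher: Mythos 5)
Your proof is correct and follows essentially the same route as the paper: the paper likewise deduces from Lemma~\ref{lemma:restricts}(\ref{part:restricts-monoids})$\textiff$(\ref{part:restricts-monads}) that monad maps $\mnd{S} \to \mnd{T}^G$ correspond to monoid maps $(\mnd{S}, (\eta^S G)^{-1}) \to (\mnd{T}^G, \kappa^G)$ in $\fmc{G}$, and then invokes terminality of $(\mnd{T}^G, \kappa^G)$ among monoids in $\fmc{G}$, with the fact that $\mnd{T}^G$ restricts to the identity on $\cat{B}$ having been recorded in the preceding discussion exactly as you cite it. Your closing remark about reducing from a full and faithful functor to a genuine inclusion is harmless but unnecessary, since the proposition (like Lemma~\ref{lemma:restricts}) is already stated for a full subcategory inclusion.
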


\begin{proof}
Let $\mnd{S} = (S, \eta^S, \mu^S)$ be a monad on $\cat{A}$ restricting to
the identity on $\cat{B}$.  Then $(\mnd{S}, (\eta^S G)^{-1})$ is a monoid
in $\fmc{G}$, and $(\mnd{T}^G, \kappa^G)$ is the terminal such, so there
exists a unique map $(\mnd{S}, (\eta^S G)^{-1}) \to (\mnd{T}^G, \kappa^G)$
of monoids in $\fmc{G}$.  But by
(\ref{part:restricts-monoids})$\textiff$(\ref{part:restricts-monads}) of
Lemma~\ref{lemma:restricts}, an equivalent statement is that there exists a
unique map $\mnd{S} \to \mnd{T}^G$ of monads.  
\done
\end{proof}

This gives a further characterization of the ultrafilter monad:  

\begin{thm}     \label{thm:uf-terminal-restricting}
The ultrafilter monad is the terminal monad on $\Set$ restricting to the
identity on $\FinSet$.
\done
\end{thm}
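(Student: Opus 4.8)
The plan is to deduce this immediately from two results already established: the theorem of Kennison and Gildenhuys (the case $\cat{B} = \FinSet$ of Theorem~\ref{thm:main}), which identifies the ultrafilter monad with the codensity monad of $\FinSet \incl \Set$, and Proposition~\ref{propn:restricts-terminal}, which says that the codensity monad of any full subcategory inclusion is the terminal monad restricting to the identity on that subcategory.

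First I would verify the hypotheses of Proposition~\ref{propn:restricts-terminal} with $\cat{A} = \Set$ and $\cat{B} = \FinSet$: since $\FinSet$ is essentially small and $\Set$ is complete, the end~\eqref{eq:coden-end} exists for every set $A$, so the inclusion $G\from \FinSet \incl \Set$ has a codensity monad $\mnd{T}^{\FinSet}$, and Proposition~\ref{propn:restricts-terminal} then asserts that $\mnd{T}^{\FinSet}$ is the terminal monad on $\Set$ restricting to the identity on $\FinSet$. Next I would apply Theorem~\ref{thm:main} with $\cat{B} = \FinSet$ (a full subcategory of $\FinSet$ containing the three-element set) to obtain an isomorphism of monads $\mnd{T}^{\FinSet} \iso \mnd{U}$, and then transport the terminality statement across this isomorphism.

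I do not expect a genuine obstacle here; all the content resides in Theorem~\ref{thm:main} and Proposition~\ref{propn:restricts-terminal}. The only point worth a remark is that the property ``is the terminal monad on $\Set$ restricting to the identity on $\FinSet$'' is invariant under isomorphism of monads: an isomorphism $\mnd{T}^{\FinSet} \iso \mnd{U}$ carries units to units, so $\mnd{U}$ too restricts to the identity on $\FinSet$ (which is in any case visible directly, every ultrafilter on a finite set being principal), and an isomorphic copy of a terminal object is again terminal. Equivalently, for any monad $\mnd{S}$ on $\Set$ restricting to the identity on $\FinSet$, Proposition~\ref{propn:restricts-terminal} supplies a unique monad map $\mnd{S} \to \mnd{T}^{\FinSet}$, hence, composing with the isomorphism, a unique monad map $\mnd{S} \to \mnd{U}$.
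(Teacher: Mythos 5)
Your proposal is correct and is exactly the argument the paper intends: the theorem is stated with no written proof precisely because it follows immediately from Proposition~\ref{propn:restricts-terminal} applied to $\FinSet \incl \Set$ together with Theorem~\ref{thm:main}, transporting terminality along the monad isomorphism $\mnd{T}^{\FinSet} \iso \mnd{U}$. Your extra remarks (existence of the codensity monad, invariance of terminality under isomorphism) are just the details the paper leaves implicit.
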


To put this result into perspective, note that the \emph{initial} monad on
$\Set$ restricting to the identity on $\FinSet$ is itself the identity, and
that a \emph{finitary} monad on $\Set$ restricting to the identity on
$\FinSet$ can only be the identity.  In this sense, the ultrafilter monad
is as far as possible from being finitary.

\section{Codensity monads as substitutes for adjunction-induced monads}
\label{sec:co-adj}

In the Introduction it was asserted that the codensity monad of a functor
$G$ is a substitute for the monad induced by $G$ and its left adjoint,
valid in situations where no adjoint exists.  The crudest justification is
the following theorem, which goes back to the earliest work on codensity
monads.

\begin{propn}   \label{propn:adjt-como}
Let $G$ be a functor with a left adjoint, $F$.  Then $G$ has a codensity
monad, which is isomorphic to $GF$ with its usual monad structure.
\end{propn}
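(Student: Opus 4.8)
The plan is to identify the codensity monad of $G$ with $GF$ by recognising $GF$, suitably augmented, as the terminal object of the category $\fmc{G}$ of Section~\ref{sec:kan}, and then checking that the monoid structure that this terminal object necessarily carries is exactly the usual monad structure on $GF$. Write $\eta\from 1_{\cat{A}} \Rightarrow GF$ and $\epsilon\from FG \Rightarrow 1_{\cat{B}}$ for the unit and counit of $F \ladj G$, and for $A \in \cat{A}$ put $H_A = \cat{A}(A, G\dashbk)\from \cat{B}\to \Set$. My first step is to show that the end~\eqref{eq:coden-end} exists and equals $GF(A)$. That end exists if and only if the functor $X \mapsto \ftrcat{\cat{B}}{\Set}(H_A, H_X)$ on $\cat{A}$ is representable, in which case $T^G(A)$ is a representing object. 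But the adjunction isomorphism $\cat{A}(A, G(B)) \iso \cat{B}(F(A), B)$, natural in $B$, gives $H_A \iso \cat{B}(F(A), \dashbk)$, so $\ftrcat{\cat{B}}{\Set}(H_A, H_X) \iso \ftrcat{\cat{B}}{\Set}(\cat{B}(F(A), \dashbk), H_X) \iso H_X(F(A)) = \cat{A}(X, GF(A))$ by the Yoneda lemma. Hence this functor is represented by $GF(A)$, so the end exists and is canonically $GF(A)$; thus $G$ has a codensity monad and $T^G \iso GF$ as functors.

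Next I would pin down the terminal structure of $\fmc{G}$ directly, which also fixes the comparison. The pair $(GF, G\epsilon)$, with $G\epsilon\from GFG \Rightarrow G$ the whiskering of the counit, is an object of $\fmc{G}$; I claim it is terminal. Given any $(S, \sigma) \in \fmc{G}$, the candidate for the unique morphism $(S, \sigma) \to (GF, G\epsilon)$ is the natural transformation $\theta$ with components $\theta_A = \sigma_{F(A)} \of S(\eta_A)\from S(A) \to GF(A)$. Naturality of $\theta$ is immediate from that of $\sigma$ and $\eta$; the compatibility $G\epsilon \of \theta G = \sigma$ follows from naturality of $\sigma$ at the counit components $\epsilon_B$ together with the triangle identity $G\epsilon \of \eta G = 1_G$; and uniqueness follows by applying the naturality of any competing $\theta'$ at the morphisms $\eta_A$ and invoking the other triangle identity $\epsilon F \of F\eta = 1_F$. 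So $(GF, G\epsilon)$ and $(T^G, \kappa^G)$ are the same terminal object of $\fmc{G}$.

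It remains to match the monad structures. By definition the codensity monad structure on $T^G$ is the unique monoid structure carried by the terminal object of the monoidal category $\fmc{G}$ (its tensor being composition of endofunctors), so it suffices to verify that the usual data $\eta\from 1_{\cat{A}} \Rightarrow GF$ and $\mu = G\epsilon F\from GFGF \Rightarrow GF$ assemble $(GF, G\epsilon)$ into a monoid in $\fmc{G}$. That $\eta$ is a morphism $(1_{\cat{A}}, \id_G) \to (GF, G\epsilon)$ is precisely the triangle identity $G\epsilon \of \eta G = 1_G$; that $\mu$ is a morphism $(GF, G\epsilon) \otimes (GF, G\epsilon) \to (GF, G\epsilon)$ reduces, on unwinding the tensor, to the identity $G\epsilon \of \mu G = G\epsilon \of (GF)(G\epsilon)$, which is naturality of $\epsilon$ at its own components; and the monoid axioms for this data are exactly the (standard) monad axioms for the adjunction-induced monad $GF$. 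By uniqueness of the monoid structure on a terminal object, this monoid is $\mnd{T}^G$, so the isomorphism $T^G \iso GF$ is an isomorphism of monads. The step that deserves genuine care, rather than routine bookkeeping with units, counits and whiskerings, is the existence claim in the first paragraph --- confirming that the specific end~\eqref{eq:coden-end} exists, and not merely that $G$ has a possibly non-pointwise right Kan extension along itself; the Yoneda-plus-adjunction computation above does exactly that.
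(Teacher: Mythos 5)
Your proof is correct. For the functor-level identification it is essentially the paper's argument in different clothing: the paper computes $GF(A) \iso \int_B [\cat{B}(F(A),B),G(B)] \iso \int_B [\cat{A}(A,G(B)),G(B)]$ directly by the end form of the Yoneda lemma plus the adjunction, whereas you phrase the same Yoneda-plus-adjunction computation as a representability statement for the cone functor $X \mapsto \ftrcat{\cat{B}}{\Set}(H_A,H_X)$; both settle the existence of the end~\eqref{eq:coden-end}, i.e.\ the pointwise Kan extension, which you rightly flag as the point needing care. Where you genuinely diverge is in the second half: the paper dismisses the compatibility of monad structures as ``straightforward to check'', while you verify it structurally, by showing that $(GF, G\epsilon)$ is terminal in $\fmc{G}$ (your triangle-identity arguments for existence and uniqueness of the map from any $(S,\sigma)$ are correct) and that $(\eta, G\epsilon F)$ makes it a monoid there, so that the unique monoid structure on the terminal object must be the adjunction-induced one. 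This buys a clean, diagram-chase-free proof of the monad isomorphism using the machinery of Section~\ref{sec:kan} (in the spirit of Proposition~\ref{propn:mult-immutable}), at the cost of invoking that machinery for a fact the paper treats as a routine direct verification; either way the result stands.
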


\begin{proof}
If $G$ is a functor $\cat{B} \to \cat{A}$ then by the Yoneda lemma,
\[
GF(A) 
\iso 
\int_B [\cat{B}(F(A), B),\, G(B)]
\iso
\int_B [\cat{A}(A, G(B)),\, G(B)]
=
T^G(A).
\]
Hence $T^G \iso GF$, and it is straightforward to check that the
isomorphism respects the monad structures. 
\done
\end{proof}

A more subtle justification is provided by the following results,
especially Corollary~\ref{cor:reflection}.  Versions of them appeared in
Section~II.1 of Dubuc~\cite{Dubu}.

We will need some further notation.  Given a category $\cat{A}$, write
$\Mnd(\cat{A})$ for the category of monads on $\cat{A}$ and $\CAT/\cat{A}$
for the (strict) slice of $\CAT$ over $\cat{A}$.  For $\mnd{S} \in
\Mnd(\cat{A})$, write $U^{\mnd{S}}\from \cat{A}^\mnd{S} \to \cat{A}$ for
the forgetful functor on the category of $\mnd{S}$-algebras.  The
assignment $\mnd{S} \mapsto (\cat{A}^{\mnd{S}}, U^{\mnd{S}})$ defines a
functor $\Alg\from \Mnd(\cat{A})^\op \to \CAT/\cat{A}$.

Now let $G\from \cat{B} \to \cat{A}$ be a functor with a codensity monad.
There is a functor $K^G\from \cat{B} \to \cat{A}^{\mnd{T}^G}$, the
\demph{comparison functor} of $G$, defined by
\[
B 
\ \longmapsto\ 
\left(
\slob{T^G G(B)}{G(B)}{\kappa^G_B}
\right)
\]
(where $\kappa^G$ is as in~\eqref{eq:coden-can-transf}).  When $G$ has a
left adjoint $F$, this is the usual comparison functor of the monad $GF$.
In any case, the diagram
\begin{equation}        \label{eq:comparison-lifts}
\begin{centerdiag}
\xymatrix{
\cat{B} \ar[r]^{K^G} \ar[rd]_G  &
\cat{A}^{\mnd{T}^G} \ar[d]^{U^{\mnd{T}^G}}      \\
        &
\cat{A}
}
\end{centerdiag}
\end{equation}
commutes.

\begin{propn}[Dubuc]   \label{propn:cat-mnd-adjn}
Let $\cat{B} \toby{G} \cat{A}$ be a functor that has a codensity monad.
Then 
\[
(\CAT/\cat{A})
\left(
\slob{\cat{B}}{\cat{A}}{G},
\ 
\slob{\cat{A}^\mnd{S}}{\cat{A}}{U^\mnd{S}}
\right)
\iso
\Mnd(\cat{A})\,(\mnd{S}, \mnd{T}^G)
\]
naturally in $\mnd{S} \in \Mnd(\cat{A})$.
\end{propn}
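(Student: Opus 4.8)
The statement to be proved is an adjointness-type bijection: maps in $\CAT/\cat{A}$ from $(G\from\cat{B}\to\cat{A})$ to $(U^\mnd{S}\from\cat{A}^\mnd{S}\to\cat{A})$ correspond naturally to monad maps $\mnd{S}\to\mnd{T}^G$. The natural approach is to construct the bijection explicitly in both directions and then check naturality in $\mnd{S}$.

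First I would unwind what a map in $\CAT/\cat{A}$ of the indicated type is. It is a functor $H\from\cat{B}\to\cat{A}^\mnd{S}$ with $U^\mnd{S}H = G$; equivalently, for each $B\in\cat{B}$ an $\mnd{S}$-algebra structure $h_B\from SG(B)\to G(B)$, natural in $B$, satisfying the algebra axioms. So such maps are in bijection with natural transformations $h\from SG\to G$ (over $\cat{A}$) for which each $h_B$ makes $G(B)$ an $\mnd{S}$-algebra. I would phrase this as: the data is a natural transformation $SG\to G$ satisfying the unit and associativity conditions with respect to $\eta^S$ and $\mu^S$.

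Next comes the key move: use the universal property of $T^G$ as a right Kan extension, i.e.\ as the terminal object of $\fmc{G}$ (Section~\ref{sec:kan}). Given $(H, h)$ as above, $(SG\to G)$ is precisely a $2$-cell exhibiting $(S, h')$ as an object of $\fmc{G}$ for a suitable reading — more carefully, the pair $(S,\sigma)$ where $\sigma$ encodes $h$ — so there is a unique map in $\fmc{G}$ to $(T^G,\kappa^G)$, giving a unique natural transformation $\alpha\from S\to T^G$ compatible with the structure maps. One then checks, using that $\fmc{G}$ is monoidal under composition and $(\mnd{T}^G,\kappa^G)$ is its terminal monoid, that the algebra axioms on $h$ force $\alpha$ to be a map of monads $\mnd{S}\to\mnd{T}^G$; this is essentially the content already packaged in Lemma~\ref{lemma:restricts} when $G$ is a full subcategory inclusion, and the same calculation works in general. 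Conversely, given a monad map $\alpha\from\mnd{S}\to\mnd{T}^G$, I would produce a $\CAT/\cat{A}$-map by composing the comparison functor $K^G\from\cat{B}\to\cat{A}^{\mnd{T}^G}$ of~\eqref{eq:comparison-lifts} with the restriction-of-scalars functor $\alpha^*\from\cat{A}^{\mnd{T}^G}\to\cat{A}^\mnd{S}$ induced by $\alpha$; since $U^\mnd{S}\alpha^* = U^{\mnd{T}^G}$ and $U^{\mnd{T}^G}K^G = G$ by~\eqref{eq:comparison-lifts}, the composite $\alpha^*K^G$ lies over $\cat{A}$.

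Then I would verify these two constructions are mutually inverse — chasing the explicit algebra structures through $\alpha^*K^G$ recovers $h$, and the uniqueness clause in the Kan-extension property recovers $\alpha$ — and finally check naturality in $\mnd{S}$, which amounts to compatibility of restriction-of-scalars with composition of monad maps and is routine. \textbf{The main obstacle} I anticipate is keeping the bookkeeping of $2$-cells straight in the first direction: correctly identifying which $2$-cell $\sigma$ the algebra structure $h$ determines so that the monoidal (monoid-in-$\fmc{G}$) structure really does translate the $\mnd{S}$-algebra axioms into the monad-map condition on $\alpha$. Once that translation is pinned down, everything else is formal; the cleanest exposition will probably reduce the general case to a full-subcategory inclusion (replacing $\cat{A}$ by the comma category or noting that only the full image of $G$ matters), exactly as in Proposition~\ref{propn:mult-immutable}, and then invoke Lemma~\ref{lemma:restricts} directly.
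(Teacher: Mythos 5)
Your proposal is correct and takes essentially the same route as the paper: identify a map over $\cat{A}$ with a natural transformation $\lambda\from SG \to G$ satisfying the algebra axioms, use terminality of $(T^G, \kappa^G)$ in $\fmc{G}$ (and of the monoid $(\mnd{T}^G, \kappa^G)$) to produce the unique monad map, and recover the inverse direction by composing the comparison functor $K^G$ with restriction of scalars along the monad map. One small caution: Lemma~\ref{lemma:restricts} as stated applies only to monads restricting to the identity on $\cat{B}$, so it cannot be invoked verbatim here; but, as you note, the same terminal-monoid calculation works in general, and that is exactly what the paper's proof does.
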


\begin{proof}
Diagram~\eqref{eq:comparison-lifts} states that $K^G$ is a map 
$(\cat{B}, G) \to (\cat{A}^{\mnd{T}^G}, U^{\mnd{T}^G})$ in $\CAT/\cat{A}$.
Let $\mnd{S} \in \Mnd(\cat{A})$ and let $L\from (\cat{B}, G) \to
(\cat{A}^\mnd{S}, U^\mnd{S})$ be a map in $\CAT/\cat{A}$.  We show that
there is a unique map of monads $\overline{L} \from \mnd{S} \to \mnd{T}^G$
satisfying 
\begin{equation}        \label{eq:cat-mnd-factn}
L 
=
\Bigl( 
(\cat{B}, G) \toby{K^G} 
(\cat{A}^{\mnd{T}^G}, U^{\mnd{T}^G}) \toby{\cat{A}^{\overline{L}}} 
(\cat{A}^\mnd{S}, U^\mnd{S})
\Bigr).
\end{equation}

For each $B \in \cat{B}$, we have an $\mnd{S}$-algebra $L(B) = \left(
\slob{SG(B)}{G(B)}{\lambda_B} \right)$.  This defines a natural
transformation
\[ 
\begin{centerdiag}
\xymatrix@C-2ex@R-2ex{
\cat{B} \ar[rr]^G \ar[rrdd]_G   &       &
\cat{A} \ar[dd]^S \ar@{}[ld]|-{\swnt\lambda}    
\\
&\ &
\\
        &&
\cat{A}.
}
\end{centerdiag}
\]
By the universal property of $(T^G, \kappa^G)$, there is a unique map
$\overline{L}\from (S, \lambda) \to (T^G, \kappa^G)$ in $\fmc{G}$.  The algebra
axioms on $L(B)$ imply that $(\mnd{S}, \lambda)$ is a monoid in $\fmc{G}$;
and since $(\mnd{T}^G, \kappa^G)$ is the terminal monoid in $\fmc{G}$, the
map $\overline{L}$ is in fact a map of monads $\mnd{S} \to \mnd{T}^G$.
Equation~\eqref{eq:cat-mnd-factn} states exactly that $\overline{L}$ is a map
$(S, \lambda) \to (T^G, \kappa^G)$ in $\fmc{G}$, so the proof is complete.
\done
\end{proof}

\begin{example} \label{eg:endo}
Every object of a sufficiently complete category has an endomorphism monad.
Indeed, let $\cat{A}$ be a category with small powers, and let $A \in
\cat{A}$.  The functor $A\from \One \to \cat{A}$ has a codensity monad,
given by $X \mapsto [\cat{A}(X, A), A]$.  This is the \demph{endomorphism
  monad} $\END(A)$ of $A$~\cite{KockDDM}.  The name is explained by
Proposition~\ref{propn:cat-mnd-adjn}, which tells us that for any monad
$\mnd{S}$ on $\cat{A}$, the $\mnd{S}$-algebra structures on $A$ correspond
one-to-one with the monad maps $\mnd{S} \to \END(A)$.
\end{example}

Proposition~\ref{propn:cat-mnd-adjn} can be rephrased explicitly as an
adjunction.  Given a category $\cat{A}$, denote by $\cm{\cat{A}}$ the full
subcategory of $\CAT/\cat{A}$ consisting of those functors into $\cat{A}$
that have a codensity monad.  Since every monadic functor has a left
adjoint and therefore a codensity monad, $\Alg$ determines a functor
$\Mnd(\cat{A})^\op \to \cm{\cat{A}}$.  On the other hand, $\mnd{T}^G$
varies contravariantly with $G$, by either direct construction or
Proposition~\ref{propn:cat-mnd-adjn}.  Thus, we have a functor
\[
\mnd{T}^\bl\from \cm{\cat{A}}^\op \to \Mnd(\cat{A}).
\]

\begin{example}
Let $\{2\}$ denote the non-full subcategory of $\Set$ consisting of the
two-element set and its identity map.  Then the inclusion
\[
\left(
\begin{centerdiag}
\xymatrix@R-1ex{\{2\}\vphantom{{}_{p_p}} \ar@{^{(}->}[d]\\ \Set}
\end{centerdiag}
\right)
\quad
\begin{centerdiag}
\xymatrix{\ \ar@{^{(}->}[r] &\ }
\end{centerdiag}
\quad
\left(
\begin{centerdiag}
\xymatrix{\FinSet\vphantom{{}_p} \ar@{^{(}->}[d]\\ \Set}
\end{centerdiag}
\right)
\]
in $\CAT/\Set$ is mapped by $\mnd{T}^\bl$ to the inclusion $\mnd{U} \incl
PP$ of the ultrafilter monad into the double power set monad.  (In the
notation of Example~\ref{eg:endo}, $PP = \END(2)$.)
\end{example}

Proposition~\ref{propn:cat-mnd-adjn} immediately implies that the
construction of codensity monads is adjoint to the construction of
categories of algebras:

\begin{thm}     \label{thm:co-adj}
Let $\cat{A}$ be a category.  Then $\Alg$ and $\mnd{T}^\bl$, as
contravariant functors between $\Mnd(\cat{A})$ and $\cm{\cat{A}}$, are
adjoint on the right.
\done 
\end{thm}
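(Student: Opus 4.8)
The plan is to deduce this directly from Proposition~\ref{propn:cat-mnd-adjn}. What needs to be checked is that the ``one-to-one correspondence'' supplied there is in fact a natural isomorphism exhibiting $\Alg$ and $\mnd{T}^\bl$ as mutually right adjoint contravariant functors; in other words, that the bijection
\[
(\CAT/\cat{A})\bigl((\cat{B},G),\ \Alg(\mnd{S})\bigr)
\iso
\Mnd(\cat{A})(\mnd{S},\mnd{T}^G)
\]
is natural not only in $\mnd{S}\in\Mnd(\cat{A})$ (which is asserted in Proposition~\ref{propn:cat-mnd-adjn}) but also in $(\cat{B},G)\in\cm{\cat{A}}$. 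Here ``adjoint on the right'' means precisely that $\Alg\from\Mnd(\cat{A})^\op\to\cm{\cat{A}}$ and $\mnd{T}^\bl\from\cm{\cat{A}}^\op\to\Mnd(\cat{A})$ satisfy $\cm{\cat{A}}((\cat{B},G),\Alg(\mnd{S}))\iso\Mnd(\cat{A})(\mnd{S},\mnd{T}^G)$ naturally in both variables; equivalently, that $\mnd{T}^\bl\from\cm{\cat{A}}^\op\to\Mnd(\cat{A})$ is left adjoint to $\Alg^\op\from\Mnd(\cat{A})\to\cm{\cat{A}}^\op$ in the ordinary (covariant) sense.

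First I would spell out precisely what the two functors do on morphisms. A morphism $\mnd{S}'\to\mnd{S}$ in $\Mnd(\cat{A})$ is a monad map; $\Alg$ sends it to the evident ``restriction of scalars'' functor $\cat{A}^{\mnd{S}}\to\cat{A}^{\mnd{S}'}$ over $\cat{A}$. A morphism in $\cm{\cat{A}}$ from $(\cat{B}',G')$ to $(\cat{B},G)$ is a functor $P\from\cat{B}'\to\cat{B}$ with $G\of P=G'$; one must note that $\mnd{T}^\bl$ sends this to a monad map $\mnd{T}^{G}\to\mnd{T}^{G'}$, which exists by the universal property: $(T^{G}G', \kappa^{G}P)=(T^{G}GP,\kappa^{G}P)$ is an object of $\fmc{G'}$, hence receives a unique map from the terminal object $(T^{G'},\kappa^{G'})$, and that map is a map of monads because $(\mnd{T}^{G'},\kappa^{G'})$ is the terminal \emph{monoid} in $\fmc{G'}$ while $(\mnd{T}^{G},\kappa^{G}P)$ is a monoid there. (This is exactly the ``either direct construction or Proposition~\ref{propn:cat-mnd-adjn}'' remark preceding the theorem, now made explicit.) With the functors in hand, the naturality of the bijection of Proposition~\ref{propn:cat-mnd-adjn} in the $(\cat{B},G)$ variable is the statement that for a morphism $P$ as above, precomposition with $P$ on the left side of the bijection corresponds, under the bijection, to postcomposition with $\mnd{T}^{P}\from\mnd{T}^{G}\to\mnd{T}^{G'}$ on the right side.

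The key step is to unwind this compatibility through the construction in the proof of Proposition~\ref{propn:cat-mnd-adjn}. Given $L\from(\cat{B},G)\to\Alg(\mnd{S})$, its transpose $\overline{L}\from\mnd{S}\to\mnd{T}^{G}$ was obtained from the natural transformation $\lambda\from SG\To G$ packaging the $\mnd{S}$-algebra structures $L(B)$, via the universal property of $(T^{G},\kappa^{G})$. The transpose of $L\of P\from(\cat{B}',G')\to\Alg(\mnd{S})$ is obtained the same way from $\lambda P\from SG'\To G'$. So what must be shown is that the unique factorization of $\lambda P$ through $\kappa^{G'}$ equals the composite of $\overline{L}$ (the factorization of $\lambda$ through $\kappa^{G}$) with the comparison map $\mnd{T}^{P}$; this is a one-line diagram chase using the defining property of $\mnd{T}^{P}$ and uniqueness of factorizations in $\fmc{G'}$. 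Granting this, the bijection is natural in both variables, which is the definition of the asserted adjunction.

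The main obstacle is bookkeeping rather than conceptual depth: one must be careful that all the ``unique maps'' floating around ($\overline{L}$, $\mnd{T}^{P}$, the transpose of $L\of P$) are being compared in the same category $\fmc{G'}$ and that the relevant objects are genuinely monoids there, so that uniqueness arguments can be invoked at the level of monoids and not merely at the level of objects of $\fmc{G'}$. Once the variance conventions (the various $\op$'s on $\Mnd(\cat{A})$ and $\cm{\cat{A}}$) are pinned down consistently, there is nothing left to do: the theorem is a formal consequence of Proposition~\ref{propn:cat-mnd-adjn} together with the parametrized uniqueness built into right Kan extensions.
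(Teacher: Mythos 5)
Your proposal is correct and follows essentially the same route as the paper: the paper defines $\mnd{T}^\bl$ on morphisms (by direct construction or via Proposition~\ref{propn:cat-mnd-adjn}) and then treats the adjunction as an immediate consequence of the natural bijection in that proposition, exactly as you do. The only difference is that you spell out the naturality in the $(\cat{B},G)$ variable (and the verification that the comparison map $\mnd{T}^P$ is a monad map via terminality of the monoid $(\mnd{T}^{G'},\kappa^{G'})$ in $\fmc{G'}$), which the paper leaves implicit; this is exactly the bookkeeping the paper's ``immediately implies'' suppresses.
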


We can usefully express this in another way still.  Recall that the functor
$\Alg\from \Mnd(\cat{A})^\op \to \CAT/\cat{A}$ is full and
faithful~\cite{StreFTM}.  The image is the full subcategory
$\monadic{\cat{A}}$ of $\CAT/\cat{A}$ consisting of the monadic functors
into $\cat{A}$.

\begin{cor}     \label{cor:reflection}
For any category $\cat{A}$, the inclusion
\[
\monadic{\cat{A}} \incl \cm{\cat{A}}
\]
has a left adjoint, given by
\[
G 
\ \longmapsto\ 
\left(
\slob{\cat{A}^{\mnd{T}^G}}{\cat{A}}{U^{\mnd{T}^G}}
\right).
\]
\ \done
\end{cor}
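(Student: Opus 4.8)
The plan is to read the corollary off from Theorem~\ref{thm:co-adj} (equivalently, Proposition~\ref{propn:cat-mnd-adjn}) together with the fact, recalled just above the corollary, that $\Alg$ is full and faithful with image $\monadic{\cat{A}}$. First I would record two easy preliminaries. Every monadic functor has a left adjoint, so by Proposition~\ref{propn:adjt-como} it has a codensity monad; hence $\monadic{\cat{A}}$ really is a subcategory of $\cm{\cat{A}}$, and since both are \emph{full} subcategories of $\CAT/\cat{A}$, the inclusion $i \from \monadic{\cat{A}} \incl \cm{\cat{A}}$ is full and faithful. Also, since $\Alg \from \Mnd(\cat{A})^\op \to \CAT/\cat{A}$ is full, faithful, and has image $\monadic{\cat{A}}$, it corestricts to an equivalence $\Mnd(\cat{A})^\op \eqv \monadic{\cat{A}}$. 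The proposed left adjoint $L \from \cm{\cat{A}} \to \monadic{\cat{A}}$ is then the composite $\Alg \of \mnd{T}^\bl$, which is \emph{co}variant (the composite of two contravariant functors) and which lands in $\monadic{\cat{A}}$; explicitly
\[
L(G) = \Alg(\mnd{T}^G) = \left( \slob{\cat{A}^{\mnd{T}^G}}{\cat{A}}{U^{\mnd{T}^G}} \right),
\]
exactly the assignment in the statement.

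Next I would compute hom-sets. Fix $G \in \cm{\cat{A}}$; since $\Alg$ is essentially surjective onto $\monadic{\cat{A}}$, an arbitrary object of $\monadic{\cat{A}}$ may be written as $\Alg(\mnd{S})$ for some monad $\mnd{S}$ on $\cat{A}$. Then
\[
\monadic{\cat{A}}\bigl(L(G),\, \Alg(\mnd{S})\bigr)
= \monadic{\cat{A}}\bigl(\Alg(\mnd{T}^G),\, \Alg(\mnd{S})\bigr)
\iso \Mnd(\cat{A})\bigl(\mnd{S},\, \mnd{T}^G\bigr)
\iso (\CAT/\cat{A})\bigl(G,\, \Alg(\mnd{S})\bigr)
= \cm{\cat{A}}\bigl(G,\, i\Alg(\mnd{S})\bigr),
\]
where the first isomorphism is full-and-faithfulness of the contravariant functor $\Alg$, the second is Proposition~\ref{propn:cat-mnd-adjn} (i.e.\ the content of Theorem~\ref{thm:co-adj}), and the final equality holds because $\cm{\cat{A}}$ and $\monadic{\cat{A}}$ are full in $\CAT/\cat{A}$ and $i$ is the inclusion.

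Finally I would check naturality, which is the only delicate point. Proposition~\ref{propn:cat-mnd-adjn} provides the second isomorphism naturally in $\mnd{S}$, and its construction (via the universal property of $(T^G, \kappa^G)$ and precomposition with $K^G$) is visibly natural in $G$ as well; the first isomorphism is natural in both $\mnd{T}^G$ and $\mnd{S}$ since full-and-faithfulness of $\Alg$ is a natural assertion. Transporting naturality in $\mnd{S}$ along the equivalence $\Mnd(\cat{A})^\op \eqv \monadic{\cat{A}}$ turns it into naturality in the object $\Alg(\mnd{S})$, so the composite bijection $\monadic{\cat{A}}(L(G), M) \iso \cm{\cat{A}}(G, iM)$ is natural in $G \in \cm{\cat{A}}$ and $M \in \monadic{\cat{A}}$ — which is precisely the adjunction $L \ladj i$, with $L$ the claimed functor. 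The main (and essentially only) obstacle is this variance-and-naturality bookkeeping, in particular confirming that naturality in the monad $\mnd{S}$ upgrades to naturality in the monadic-functor object $\Alg(\mnd{S})$; the equivalence $\Alg$ handles this, and everything else is formal.
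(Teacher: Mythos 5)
Your argument is correct and is essentially the paper's own: the corollary is obtained there as an immediate consequence of Theorem~\ref{thm:co-adj} (equivalently Proposition~\ref{propn:cat-mnd-adjn}) together with the recalled fact that $\Alg\from\Mnd(\cat{A})^\op\to\CAT/\cat{A}$ is full and faithful with image $\monadic{\cat{A}}$, so that the right adjunction $\Alg \dashv \mnd{T}^\bl$ transports along the equivalence $\Mnd(\cat{A})^\op \eqv \monadic{\cat{A}}$ to the stated reflection. You merely spell out the hom-set and naturality bookkeeping that the paper leaves implicit (and the naturality in $G$ you worry about is in any case automatic once the bijection is natural in the monadic-functor variable).
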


In other words, among all functors into $\cat{A}$ admitting a codensity
monad, the monadic functors form a reflective subcategory.  The reflection
turns a functor $G$ into the monadic functor corresponding to the codensity
monad of $G$.  This is the more subtle sense in which the codensity monad
of a functor $G$ is the best approximation to the monad induced by $G$ and
its (possibly non-existent) left adjoint.

\begin{cor}     \label{cor:CptHff}
In $\CAT/\Set$, the initial map from $(\FinSet \incl \Set)$ to a monadic
functor is
\[
\left(
\begin{centerdiag}
\xymatrix{\FinSet\vphantom{{}_p} 
\ar@{^{(}->}[d] \\ \Set}
\end{centerdiag}
\right)
\quad
\begin{centerdiag}
\xymatrix{\ \ar@{^{(}->}[r] &\ }
\end{centerdiag}
\quad
\left(
\begin{centerdiag}
\xymatrix{\CptHff \ar[d] \\ \Set}
\end{centerdiag}
\right).
\]
\ \done
\end{cor}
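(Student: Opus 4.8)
The plan is to read the statement off from Corollary~\ref{cor:reflection}, feeding in two facts already available: that the codensity monad of $\FinSet \incl \Set$ is the ultrafilter monad (Theorem~\ref{thm:main} with $\cat{B} = \FinSet$), and Manes's theorem~\cite{ManeTTC} that this monad is the monad of compact Hausdorff spaces. So first I would write $G = (\FinSet \incl \Set)$. By Theorem~\ref{thm:main} applied to $\cat{B} = \FinSet$, the functor $G$ has a codensity monad with $\mnd{T}^G \iso \mnd{U}$, so $G$ is an object of $\cm{\Set}$. Corollary~\ref{cor:reflection} then says that the inclusion $\monadic{\Set} \incl \cm{\Set}$ is reflective and sends $G$ to $\bigl(\Set^{\mnd{T}^G} \to \Set\bigr) = \bigl(\Set^{\mnd{U}} \to \Set\bigr)$, with reflection unit the comparison functor $K^G$ of~\eqref{eq:comparison-lifts} (its universal property is Proposition~\ref{propn:cat-mnd-adjn}). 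Since the unit of a reflective subcategory is initial among all morphisms out of $G$ into that subcategory, $K^G$ is exactly the initial map from $(\FinSet \incl \Set)$ to a monadic functor.

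To recognise $\bigl(\Set^{\mnd{U}}\to\Set\bigr)$ as $\bigl(\CptHff \to \Set\bigr)$, I would invoke Manes~\cite{ManeTTC}: the forgetful functor $\CptHff \to \Set$ is monadic with associated monad $\mnd{U}$, so its comparison functor is an equivalence $E\from \CptHff \eqv \Set^{\mnd{U}}$ commuting with the forgetful functors to $\Set$. Transporting the reflection along $E$ (up to this equivalence, which is the mild looseness flagged after the statement), the functor $\bigl(\CptHff \to \Set\bigr)$ is likewise a reflection of $G$, with unit a functor $\FinSet \to \CptHff$ that composes with $E$ to give $K^G$; this unit is therefore the initial map from $(\FinSet \incl \Set)$ to a functor monadic over $\Set$.

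It remains to see that this unit is the displayed inclusion. Since $G$ is full and faithful, $\kappa^G = (\eta^G G)^{-1}$ (as noted just before Lemma~\ref{lemma:restricts}), so for a finite set $B$ the $\mnd{U}$-algebra $K^G(B)$ has underlying set $B$ and structure map $\kappa^G_B \from U(B) \to B$, namely the inverse of the unit $\eta^G_B\from B \to U(B)$, $x \mapsto \uf{U}_x$; that is, it sends each (necessarily principal) ultrafilter $\uf{U}_x$ on $B$ to $x$. Under $E$ this $\mnd{U}$-algebra corresponds to the finite set $B$ with its discrete topology, which is a compact Hausdorff space, and since $E$ commutes with the forgetful functors the triangle over $\Set$ is the one drawn. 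Hence the unit is the inclusion $\FinSet \incl \CptHff$ of finite sets as finite discrete spaces, as claimed.

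The argument is essentially bookkeeping, and the only step calling for any real care is the last one: verifying that, along Manes's equivalence, the comparison functor $K^G$ is the evident inclusion of $\FinSet$ into $\CptHff$ — equivalently, that the $\mnd{U}$-algebras supported on finite sets are precisely the finite discrete spaces. One should also keep in mind that ``initial map to a monadic functor'' is being used in the slightly loose sense noted after the statement, i.e.\ up to the equivalence $\CptHff \eqv \Set^{\mnd{U}}$, and that otherwise it is just the reflection unit supplied by Corollary~\ref{cor:reflection}.
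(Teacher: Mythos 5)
Your proposal is correct and follows exactly the route the paper intends: the corollary is stated without proof precisely because it is the reflection unit of Corollary~\ref{cor:reflection} applied to $\FinSet \incl \Set$, combined with Theorem~\ref{thm:main} (the codensity monad is $\mnd{U}$) and Manes's identification $\Set^{\mnd{U}} \eqv \CptHff$. Your extra verification that the comparison functor $K^G$ transports to the inclusion of finite sets as finite discrete spaces is a welcome filling-in of the detail the paper leaves implicit.
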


As a footnote, we observe that being codense is, in a sense, the opposite
of being monadic.  Indeed, if $G\from \cat{B} \to \cat{A}$ is codense then
$\cat{A}^{\mnd{T}^G} \eqv \cat{A}$, whereas if $G$ is monadic then
$\cat{A}^{\mnd{T}^G} \eqv \cat{B}$.  More precisely:

\begin{propn}
A functor is both codense and monadic if and only if it is an equivalence.
\end{propn}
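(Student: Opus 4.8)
The plan is to prove both implications. The easy direction first: if $G\from\cat{B}\to\cat{A}$ is an equivalence, then it certainly has a left adjoint (its quasi-inverse), so by Proposition~\ref{propn:adjt-como} its codensity monad is $GF$, which is isomorphic to the identity monad; hence $\cat{A}^{\mnd{T}^G}\eqv\cat{A}$ and $U^{\mnd{T}^G}$ is an equivalence, so $G$ is monadic. And an equivalence is always codense (the right adjoint $\Hom(\dashbk,G)$ is full and faithful, being essentially $\Hom(\dashbk,\cat{A})$ composed with restriction along an equivalence). So the substantive content is the converse.

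For the converse, suppose $G$ is both codense and monadic. Codensity of $G$ means that $\mnd{T}^G$ is (isomorphic to) the identity monad on $\cat{A}$, so $\cat{A}^{\mnd{T}^G}$ is just $\cat{A}$ itself, with $U^{\mnd{T}^G}=\id_\cat{A}$. Now examine the comparison functor $K^G\from\cat{B}\to\cat{A}^{\mnd{T}^G}=\cat{A}$ from Section~\ref{sec:co-adj}, which satisfies $U^{\mnd{T}^G}\of K^G=G$, i.e.\ $K^G=G$ under this identification. But monadicity of $G$ says precisely that the comparison functor $K^G$ is an equivalence. Therefore $G$ itself is an equivalence.

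The one point that needs a little care is matching up the two notions of codensity monad in play. In Section~\ref{sec:co-adj}, $\mnd{T}^G$ is treated via its Kan-extension description (Section~\ref{sec:kan}), and we need that a codense functor (in the sense of Section~\ref{sec:codensity}, namely $\Hom(\dashbk,G)$ full and faithful) indeed has codensity monad isomorphic to the identity in the Kan-extension sense. This is exactly the content of the discussion around equation~\eqref{eq:coden-can-transf}: codensity of $G$ is equivalent to $(T^G,\kappa^G)\iso(\id_\cat{A},\id)$ as the terminal object of $\fmc{G}$, and when the codomain is complete and the domain essentially small this recovers the monad of Section~\ref{sec:codensity}. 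Since we are free to assume whatever ambient smallness/completeness makes both ``codense'' and ``monadic'' meaningful, I expect the main obstacle is simply being careful that ``monadic'' is used in its standard sense (comparison functor is an equivalence) so that it plugs directly into the identification $\cat{A}^{\mnd{T}^G}\eqv\cat{A}$; once that bookkeeping is in place the argument is a one-line consequence of the definitions.
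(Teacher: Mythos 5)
Your proposal is correct and follows essentially the same route as the paper: factor $G = U^{\mnd{T}^G} \of K^G$, note that codensity makes $U^{\mnd{T}^G}$ an equivalence (since $\mnd{T}^G \iso \id$) and monadicity makes $K^G$ an equivalence, while the forward direction is immediate. The extra bookkeeping you flag about reconciling the two descriptions of the codensity monad is handled implicitly in the paper in just the way you suggest.
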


\begin{proof}
An equivalence is certainly codense and monadic.  Conversely, for any
functor $G\from \cat{B} \to \cat{A}$ with a codensity monad,
diagram~\eqref{eq:comparison-lifts} states that 
\[
G 
=
\Bigl(
\xymatrix@1@C+2ex{
\cat{B} \ar[r]^{K^G} &
\cat{A}^{\mnd{T}^G} \ar[r]^{U^{\mnd{T}^G}} &
\cat{A}
}
\Bigr).
\]
If $G$ is monadic then $G$ has a codensity monad and the comparison functor
$K^G$ is an equivalence; on the other hand, if $G$ is codense then
$\mnd{T}^G$ is isomorphic to the identity, so $U^{\mnd{T}^G}$ is an
equivalence.  The result follows.  
\done
\end{proof}

\section{Double dual vector spaces}
\label{sec:dd}

In this section we prove that the codensity monad of the inclusion
\[
\text{(finite-dimensional vector spaces)} 
\incl
\text{(vector spaces)}
\]
is double dualization.  Much of the proof is analogous to the proof that
the codensity monad of $\FinSet \incl \Set$ is the ultrafilter monad.  (See
the table in the Introduction.)  Nevertheless, aspects of the analogy
remain unclear, and finding a common generalization remains an open
question.

Fix a field $k$ for the rest of this section.  Write $\Vect$ for the
category of $k$-vector spaces, $\FDVect$ for the full subcategory of
finite-dimensional vector spaces, and $\mnd{T} = (T, \eta, \mu)$ for the
codensity monad of $\FDVect \incl \Vect$.  The dualization functor
$\blank^*$ is, as a contravariant functor from $\Vect$ to $\Vect$,
self-adjoint on the right.  This gives the double dualization functor
$\blank^{**}$ the structure of a monad on $\Vect$.  We prove that $\mnd{T}
\iso \blank^{**}$.

Pursuing the analogy, we regard elements $\uf{U}$ of a double dual space
$X^{**}$ as akin to measures on $X$, and we will define an integral
operator $\int_X \idashbk \dee\uf{U}$.  Specifically, let $X \in \Vect$ and
$\uf{U} \in X^{**}$.  We wish to define, for each $B \in \FDVect$, a map
\begin{equation}        \label{eq:vect-integral}
\int_X \idashbk \dee\uf{U}\from \Vect(X, B) \to B.
\end{equation}
In the ultrafilter context, integration has the property that $\int_X
\chi_Y \dee\uf{U} = \mu_{\uf{U}}(Y)$ whenever $\uf{U}$ is an ultrafilter on
a set $X$ and $Y \in P(X)$ (equation~\eqref{eq:char-fn-int}).  Analogously,
we require now that $\int_X \xi \dee\uf{U} = \uf{U}(\xi)$ whenever $\uf{U}
\in X^{**}$ and $\xi \in X^*$; that is, when $B = k$, the integration
operator~\eqref{eq:vect-integral} is $\uf{U}$ itself.  Integration should
also be natural in $B$.  We show that these two requirements determine
$\int_X \idashbk \dee\uf{U}$ uniquely.

\begin{propn}   \label{propn:int-vect-ax}
Let $X$ be a vector space and $\uf{U} \in X^{**}$.  Let $B$ be a
finite-dimensional vector space.  Then there is a unique map of sets 
\[
\int_X \idashbk \dee\uf{U} \from \Vect(X, B) \to B
\]
such that for all $\beta \in B^*$, the square
\[
\xymatrix{
\Vect(X, B) \ar[r]^{\beta\of\dashbk} 
\ar[d]_{\int_X \idashbk \dee\uf{U}}     &
\Vect(X, k) \ar[d]^{\uf{U}}     \\
B \ar[r]_\beta  &
k \\
}
\]
commutes.  When $B = k$, moreover, $\int_X \idashbk \dee\uf{U} = \uf{U}$.
\end{propn}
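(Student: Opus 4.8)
The plan is to use the finite-dimensionality of $B$ through the canonical isomorphism $B \iso B^{**}$. Unwinding the diagram, the defining condition on $\int_X \idashbk \dee\uf{U}$ is that
\[
\beta\Bigl( \int_X f \dee\uf{U} \Bigr) = \uf{U}(\beta \of f)
\]
for every $f \in \Vect(X, B)$ and every $\beta \in B^*$. So for each fixed $f$ I must produce an element of $B$ whose image under evaluation in $B^{**}$ is the functional $\beta \mapsto \uf{U}(\beta \of f)$, then check this determines the map uniquely and gives the claimed value when $B = k$.

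First I would fix $f \in \Vect(X, B)$ and define $\phi_f \from B^* \to k$ by $\phi_f(\beta) = \uf{U}(\beta \of f)$. This is the composite of precomposition by $f$, which is a linear map $B^* \to X^*$, with $\uf{U} \in X^{**} = \Vect(X^*, k)$, so $\phi_f$ is linear; that is, $\phi_f \in B^{**}$. Since $B$ is finite-dimensional, the canonical evaluation map $\mathrm{ev}_B \from B \to B^{**}$ is an isomorphism, so I may set $\int_X f \dee\uf{U} = \mathrm{ev}_B^{-1}(\phi_f)$. By construction $\beta\bigl( \int_X f \dee\uf{U} \bigr) = \phi_f(\beta) = \uf{U}(\beta \of f)$ for all $\beta \in B^*$, which is exactly commutativity of the square; this gives existence.

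For uniqueness, suppose $J \from \Vect(X, B) \to B$ also makes the square commute for every $\beta$. Then for each $f$, the elements $J(f)$ and $\int_X f \dee\uf{U}$ of $B$ have the same image $\uf{U}(\beta \of f)$ under every $\beta \in B^*$, and since $B^*$ separates the points of $B$ they are equal. For the ``moreover'' clause, specialize to $B = k$ and take $\beta = \id_k \in \Vect(k, k) = k^*$: commutativity then reads $\int_X f \dee\uf{U} = \id_k\bigl( \int_X f \dee\uf{U} \bigr) = \uf{U}(\id_k \of f) = \uf{U}(f)$, so, identifying $\Vect(X, k)$ with $X^*$ and recalling $\uf{U} \in X^{**} = \Vect(X^*, k)$, the map $\int_X \idashbk \dee\uf{U}$ coincides with $\uf{U}$.

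I do not expect a genuine obstacle here: the content is just that a vector in a finite-dimensional space is recoverable from its pairings with all covectors. The only essential use of finite-dimensionality is the surjectivity of $\mathrm{ev}_B$, which is what secures existence; uniqueness and the computation at $B = k$ go through for any $B$ whatsoever.
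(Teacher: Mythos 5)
Your proof is correct, and it reaches the same essential point as the paper --- that an element of a finite-dimensional space is recoverable from its pairings with all covectors --- but by a slightly different route. The paper argues that the property is invariant under isomorphism, reduces to $B = k^n$, and computes with the projections $\pr_1, \ldots, \pr_n$ to show that the defining condition is equivalent to the explicit formula $\int_X f \dee\uf{U} = (\uf{U}(f_1), \ldots, \uf{U}(f_n))$, which gives existence and uniqueness simultaneously; this coordinate formula is then reused immediately after the proposition to observe that $\int_X \idashbk \dee\uf{U}$ is linear in $f$, a fact needed later. You instead stay coordinate-free: the assignment $\beta \mapsto \uf{U}(\beta \of f)$ is an element $\phi_f \in B^{**}$, and the canonical evaluation $\mathrm{ev}_B \from B \to B^{**}$ being an isomorphism in finite dimensions lets you define $\int_X f \dee\uf{U} = \mathrm{ev}_B^{-1}(\phi_f)$, with uniqueness from $B^*$ separating points and the $B = k$ case by taking $\beta = \id_k$. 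Your version makes it cleanly visible exactly where finite-dimensionality enters (surjectivity of $\mathrm{ev}_B$, for existence only), and your closing remark that existence can genuinely fail without it is right; the paper's version buys, at the cost of choosing coordinates, the explicit formula from which linearity of the integration operator is read off. Your construction also yields that linearity with no extra work ($\phi_f$ depends linearly on $f$ and $\mathrm{ev}_B^{-1}$ is linear), though you did not need to say so for this statement.
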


\begin{proof}
The main statement asserts that $B$ has a certain property; but if 
some vector space isomorphic to $B$ has this property then plainly $B$ does
too.  So it is enough to prove it when $B = k^n$ for some $n \in \nat$.  

Write $\pr_1, \ldots, \pr_n\from k^n \to k$ for the projections, and for $f
\in \Vect(X, k^n)$, write $f_i = \pr_i \of f$.  For any map of sets $\int_X
\idashbk \dee\uf{U}\from \Vect(X, k^n) \to k^n$, 
\begin{align}
\beta \biggl( \int_X \dashbk \dee\uf{U} \biggr) &
=
\uf{U} (\beta \of \dashbk)
\text{ for all } \beta \in (k^n)^*  
\nonumber       \\
\iff
\beta \biggl( \int_X f \dee\uf{U} \biggr)       &
= 
\uf{U}(\beta \of f)
\text{ for all } \beta \in (k^n)^* \text{ and } f \in \Vect(X, k^n)
\nonumber       \\
\iff    
\pr_i \biggl( \int_X f \dee\uf{U} \biggr)       &
= 
\uf{U} ( \pr_i \of f )
\text{ for all } i \in \{1, \ldots, n\} \text{ and } f \in \Vect(X, k^n)
\nonumber       \\
\iff    
\int_X f \dee\uf{U}     &     
= 
(\uf{U}(f_1), \ldots, \uf{U}(f_n))
\text{ for all } f \in \Vect(X, k^n).
\label{eq:int-is-linear}
\end{align}
This proves both existence and uniqueness.  The result on $B = k$ also
follows. 
\done
\end{proof}

Equation~\eqref{eq:int-is-linear} implies that $\int_X \idashbk \dee\uf{U}$
is, in fact, linear with respect to the usual vector space structure on
$\Vect(X, B)$.  (In principle, the notation $\Vect(X, B)$ denotes a mere
set.)  Thus, a linear map 
\[
\uf{U}\from \Vect(X, k) \to k
\]
gives rise canonically to a linear map
\[
\int_X \idashbk \dee\uf{U} \from \Vect(X, B) \to B
\]
for each finite-dimensional vector space $B$.

Integration is natural in two ways, as for sets and ultrafilters
(Lemma~\ref{lemma:uf-int-nat}).  Indeed, writing $\undvect{\cdot}\from
\FDVect \to \Set$ for the underlying set functor, we have the following. 

\begin{lemma}   \label{lemma:int-vect-nat}
\begin{enumerate}
\item   \label{part:int-vect-nat-cod}
Let $X$ be a vector space and $\uf{U} \in X^{**}$.  Then integration
against $\uf{U}$ defines a natural transformation 
\[
\xymatrix@C+5em{
\FDVect 
\rtwocell^{\Vect(X, \dashbk)}_{\undvect{\cdot}}%
{\quad\quad\int\idashbk\dee\uf{U}} &
\Set.
}
\]

\item   \label{part:int-vect-nat-dom}
For any map $X \toby{p} Y$ in $\Vect$ and any $\uf{U} \in X^{**}$, the
triangle 
\[
\xymatrix{
\Vect(X, \dashbk) \ar[rd]_{\int_X \idashbk \dee\uf{U}}  &       &
\Vect(Y, \dashbk) \ar[ll]_{\dashbk \of p} 
\ar[ld]^{\int_Y \idashbk \dee(p^{**}(\uf{U}))}     \\
        &
\undvect{\cdot}
}
\]
in $\ftrcat{\FDVect}{\Set}$ commutes.
\end{enumerate}
\end{lemma}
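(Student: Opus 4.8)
The plan is to reduce both parts to the universal characterization of the integral in Proposition~\ref{propn:int-vect-ax}, in exact analogy with the way Lemma~\ref{lemma:uf-int-nat} was reduced to the explicit construction of $\int_X\idashbk\dee\uf{U}$ from an ultrafilter. The key point is that an element of a finite-dimensional vector space $B$ is determined by its values under all functionals $\beta\in B^*$; so to show that two elements of $B$ (or two maps into $B$) agree, it is enough to apply every $\beta\in B^*$ and invoke the defining identity $\beta\bigl(\int_X f\dee\uf{U}\bigr)=\uf{U}(\beta\of f)$ for $f\in\Vect(X,B)$.

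For part~(\ref{part:int-vect-nat-cod}), I would take $\theta\from B\to C$ in $\FDVect$ and $f\in\Vect(X,B)$, and check $\theta\bigl(\int_X f\dee\uf{U}\bigr)=\int_X(\theta\of f)\dee\uf{U}$ by pairing with an arbitrary $\gamma\in C^*$. The left-hand side becomes $(\gamma\of\theta)\bigl(\int_X f\dee\uf{U}\bigr)=\uf{U}\bigl((\gamma\of\theta)\of f\bigr)$, applying Proposition~\ref{propn:int-vect-ax} to the functional $\gamma\of\theta\in B^*$; the right-hand side becomes $\uf{U}\bigl(\gamma\of(\theta\of f)\bigr)$, applying it to $\gamma\in C^*$ and the map $\theta\of f\in\Vect(X,C)$. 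These agree by associativity of composition, and since $C^*$ separates the points of $C$, the naturality square commutes. For part~(\ref{part:int-vect-nat-dom}), given $p\from X\to Y$, $\uf{U}\in X^{**}$, $B\in\FDVect$ and $g\in\Vect(Y,B)$, I would again pair $\int_X(g\of p)\dee\uf{U}=\int_Y g\dee\bigl(p^{**}(\uf{U})\bigr)$ with an arbitrary $\beta\in B^*$: the left side gives $\uf{U}\bigl(\beta\of g\of p\bigr)$, and the right side gives $\bigl(p^{**}(\uf{U})\bigr)(\beta\of g)$. Using the concrete description $p^{**}(\uf{U})=\uf{U}\of p^*$ as a functional on $Y^*$ (with $p^*\from Y^*\to X^*$ precomposition by $p$), the right side is $\uf{U}\bigl((\beta\of g)\of p\bigr)$, which matches; this yields the change-of-variables identity and hence commutativity of the triangle in $\ftrcat{\FDVect}{\Set}$.

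I do not expect a genuine obstacle: this is a routine diagram chase, and the only things to keep straight are the identifications $p^{**}(\uf{U})=\uf{U}\of p^*$ and $(g\of p)^*=p^*\of g^*$, together with the fact that $B^*$ separates points of $B$. As a more conceptual alternative I could observe that $\int_X f\dee\uf{U}$ is simply the image of the functional $\uf{U}\of f^*\in B^{**}$ under the canonical isomorphism $B^{**}\iso B$ (available because $B$ is finite-dimensional), so that both parts fall out of the naturality of $\blank^*$, of composition, and of the canonical map into the double dual; but I would carry out the elementary argument above in order to keep the parallel with the set-theoretic case (Lemma~\ref{lemma:uf-int-nat}) transparent.
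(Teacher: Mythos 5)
Your proposal is correct and follows essentially the same route as the paper: both parts are reduced, via the fact that $B^*$ separates the points of $B$, to the defining property $\beta\bigl(\int_X f \dee\uf{U}\bigr) = \uf{U}(\beta \of f)$ and the uniqueness clause of Proposition~\ref{propn:int-vect-ax}, with the identification $p^{**}(\uf{U}) = \uf{U} \of p^*$ handling the change-of-variables triangle exactly as in the paper's diagram chase.
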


\begin{proof}
For~(\ref{part:int-vect-nat-cod}), we must prove that for any map $C
\toby{\theta} B$ in $\FDVect$, the square
\[
\xymatrix{
\Vect(X, C) \ar[r]^{\theta\of\dashbk} \ar[d]_{\int \idashbk \dee\uf{U}} &
\Vect(X, B) \ar[d]^{\int \idashbk \dee\uf{U}} \\
C \ar[r]_\theta &
B
}
\]
commutes.  Since the points of $B$ are separated by linear functionals, it
is enough to prove that the square commutes when followed by any linear
$\beta\from B \to k$, and this is a consequence of
Proposition~\ref{propn:int-vect-ax}.

For~(\ref{part:int-vect-nat-dom}), let $B \in \FDVect$.  By the uniqueness
part of Proposition~\ref{propn:int-vect-ax}, it is enough to show that for
all $\beta \in B^*$, the outside of the diagram
\[
\xymatrix{
\Vect(Y, B) \ar[rr]^{\beta\of\dashbk}
\ar[d]_{\dashbk \of p}   &
&
\Vect(Y, k) \ar[ld]_{p^*} \ar[dd]^{p^{**}(\uf{U})}        \\
\Vect(X, B) \ar[r]^{\beta\of\dashbk} 
\ar[d]_{\int \idashbk \dee\uf{U}}     &
\Vect(X, k) \ar[rd]_{\uf{U}}    \\
B \ar[rr]_\beta &
&
k
}
\]
commutes; and the inner diagrams demonstrate that it does.
\done
\end{proof}

Now consider the codensity monad $\mnd{T}$ of $\FDVect \incl
\Vect$.  By definition,
\[
T(X)
=
\int_{B \in \FDVect} [\Vect(X, B), B]
\]
($X \in \Vect$).  Thus, an element $I \in T(X)$ is a family
\[
\Bigl( \Vect(X, B) \toby{I_B} B \Bigr)_{B \in \FDVect}
\]
natural in $B$.  (\latin{A priori}, each $I_B$ is a mere map of sets,
not necessarily linear; but see Lemma~\ref{lemma:linearity-for-free}
below.)  Since the forgetful functor $\Vect \to \Set$ preserves
limits, the underlying set of $T(X)$ is just the set of natural
transformations
\begin{equation}        \label{eq:nat-vect}
\begin{centerdiag}
\xymatrix@C+1em{
\FDVect
\rtwocell^{\Vect(X, \dashbk)}_{\undvect{\cdot}}  &
\Set.
}
\end{centerdiag}
\end{equation}

\begin{propn}   \label{propn:vect-transf}
There is a natural transformation $\blank^{**} \to T$ with components
\begin{equation}        \label{eq:vect-transf}
\begin{mapdefn}
X^{**}  &\to            &T(X)   \\
\uf{U}  &\longmapsto    &\int_X \idashbk \dee\uf{U}     
\end{mapdefn}
\end{equation}
($X \in \Vect$).
\end{propn}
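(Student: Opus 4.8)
The plan is to imitate the proof of Proposition~\ref{propn:uf-transf}, extracting both claims directly from Lemma~\ref{lemma:int-vect-nat} together with the identification~\eqref{eq:nat-vect} of the underlying set of $T(X)$ with a set of natural transformations. First I would check that~\eqref{eq:vect-transf} is well defined. Fix $X \in \Vect$ and $\uf{U} \in X^{**}$. By Lemma~\ref{lemma:int-vect-nat}(\ref{part:int-vect-nat-cod}), integration against $\uf{U}$ is a natural transformation from $\Vect(X, \dashbk)$ to $\undvect{\cdot}$; but by~\eqref{eq:nat-vect} the underlying set of $T(X)$ \emph{is} the set of such natural transformations, so this datum is precisely an element of $T(X)$. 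Hence $\uf{U} \mapsto \int_X \idashbk \dee\uf{U}$ is a function $X^{**} \to T(X)$.

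Next I would verify naturality in $X$. Given $p\from X \to Y$ in $\Vect$, the functor $\blank^{**}$ sends $p$ to $p^{**}$, while the codensity monad $T$ sends $p$ to the map $T(X) \to T(Y)$ whose effect on the $B$-component ($B \in \FDVect$) is precomposition with $\dashbk \of p\from \Vect(Y, B) \to \Vect(X, B)$. Tracing an element $\uf{U} \in X^{**}$ around the naturality square, commutativity reduces to the identity
\[
\int_Y g \dee(p^{**}\uf{U}) = \int_X (g \of p) \dee\uf{U}
\]
for all $B \in \FDVect$ and $g \in \Vect(Y, B)$, which is exactly the commuting triangle of Lemma~\ref{lemma:int-vect-nat}(\ref{part:int-vect-nat-dom}).

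This already gives a natural transformation of the underlying $\Set$-valued functors; since $\blank^{**}$ and $T$ are endofunctors of $\Vect$, the last thing to confirm is that each component $X^{**} \to T(X)$ is linear. The abstract characterization of $\int_X \idashbk \dee\uf{U}$ is not by itself enough for this, so here I would fall back on the explicit formula~\eqref{eq:int-is-linear}: identifying an arbitrary $B \in \FDVect$ with some $k^n$ (as in Proposition~\ref{propn:int-vect-ax}), that formula computes $\int_X f \dee\uf{U}$ coordinatewise as $(\uf{U}(f_1), \ldots, \uf{U}(f_n))$, which depends $k$-linearly on $\uf{U}$; since $T(X)$ inherits its vector space structure from the $B \in \FDVect$ pointwise (being a limit of them), the component is linear. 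I do not anticipate a genuine obstacle: the real work lies in Proposition~\ref{propn:int-vect-ax} and Lemma~\ref{lemma:int-vect-nat}, and the only delicate point is keeping the two descriptions of $T(X)$ --- as an end and as a set of natural transformations --- matched up, together with the short linearity verification just indicated.
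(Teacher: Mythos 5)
Your proposal is correct and takes essentially the same route as the paper: well-definedness of each component via Lemma~\ref{lemma:int-vect-nat}(\ref{part:int-vect-nat-cod}) together with the identification~\eqref{eq:nat-vect}, naturality in $X$ via Lemma~\ref{lemma:int-vect-nat}(\ref{part:int-vect-nat-dom}), and a check that each component $X^{**} \to T(X)$ is linear. The only cosmetic difference is in the linearity step, where the paper invokes the uniqueness part of Proposition~\ref{propn:int-vect-ax} while you read it off the coordinate formula~\eqref{eq:int-is-linear}; since that formula is exactly what the uniqueness argument produces, the two justifications coincide.
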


\begin{proof}
Lemma~\ref{lemma:int-vect-nat}(\ref{part:int-vect-nat-cod}) guarantees
that~\eqref{eq:vect-transf} is a well-defined function for each $X$.  The
uniqueness part of Proposition~\ref{propn:int-vect-ax} implies that it is
linear for each $X$.
Lemma~\ref{lemma:int-vect-nat}(\ref{part:int-vect-nat-dom}) tells us that
it is natural in $X$.  
\done
\end{proof}

We are nearly ready to show that the natural
transformation~\eqref{eq:vect-transf} is an isomorphism of monads.  But we
observed after Proposition~\ref{propn:int-vect-ax} that integration against
an ultrafilter is linear, so if this is isomorphism is to hold, the maps
$I_B$ must also be linear.  We prove this now.

\begin{lemma}   \label{lemma:linearity-for-free}
Let $X \in \Vect$ and $I \in T(X)$.  Then for each $B \in \FDVect$, the map
\[
I_B \from \Vect(X, B) \to B
\]
is linear with respect to the usual vector space structure on $\Vect(X, B)$.
\end{lemma}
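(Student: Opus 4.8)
The plan is to reduce linearity of each $I_B$ to a statement about the composite functor $k\hyph\Mod$, exactly mirroring the trick used in Lemma~\ref{lemma:uf-int-linear}. The key observation is that an element $I \in T(X)$ is, by~\eqref{eq:nat-vect}, a natural transformation $\Vect(X, \dashbk) \to \undvect{\cdot}$ between functors $\FDVect \to \Set$, and both of these functors factor through $k\hyph\Mod$ (the category of $k$-modules, i.e.\ $k$-vector spaces, but now with the \emph{algebraic} theory of modules made explicit as a finite-product theory). What needs care is that the relevant finite products exist and are preserved.

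First I would check that $\FDVect$ has finite products (it is closed in $\Vect$ under the biproduct $B \oplus C$) and that the two functors in question preserve them: the representable $\Vect(X, \dashbk)$ preserves products as any hom-functor does, and the underlying-set functor $\undvect{\cdot}\from\FDVect \to \Set$ preserves products because the biproduct of finite-dimensional spaces has underlying set the cartesian product. Since the theory of $k$-modules is a finite-product (Lawvere) theory, taking internal $k$-modules commutes with applying these product-preserving functors; concretely, both $\Vect(X,\dashbk)$ and $\undvect{\cdot}$ lift to functors $\FDVect \to k\hyph\Mod$, where $\Vect(X,B)$ carries its pointwise module structure and $\undvect{B}$ carries its native structure, and the natural transformation $I$ lifts to a natural transformation between these lifted functors. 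That lifted naturality is exactly the assertion that each $I_B\from\Vect(X,B)\to B$ is $k$-linear.

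The one point requiring a genuine argument — the main obstacle — is justifying why $I$, which a priori is only a natural transformation of $\Set$-valued functors, automatically respects the module structure when we pass to $k\hyph\Mod$. The slick way is to note that the module operations on a finite-dimensional $B$ are themselves morphisms in $\FDVect$: addition $B\oplus B \to B$ and scalar multiplication $B \to B$ (for each scalar) are linear maps, hence arrows of $\FDVect$. Naturality of $I$ with respect to these arrows, together with the fact that $\Vect(X,B\oplus B)\iso\Vect(X,B)\times\Vect(X,B)$ compatibly with the product decomposition $\undvect{B\oplus B}\iso\undvect{B}\times\undvect{B}$ (using product-preservation as above), forces $I_B$ to carry the pointwise addition and scalar action on $\Vect(X,B)$ to the operations on $B$. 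Spelling this out for a single scalar $\lambda$: apply naturality to the arrow $\lambda\cdot(\dashbk)\from B\to B$ to get $I_B(\lambda\cdot f)=\lambda\cdot I_B(f)$; and for addition, apply naturality to $+\from B\oplus B\to B$ after identifying a pair $(f,g)\in\Vect(X,B)^2$ with the element of $\Vect(X,B\oplus B)$ it determines, to get $I_B(f+g)=I_B(f)+I_B(g)$. This completes the proof; the remaining bookkeeping (that these identifications are the correct ones, and that one may restrict attention to $B=k^n$ by the usual isomorphism-invariance) is routine.
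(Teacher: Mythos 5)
Your proposal is correct and follows essentially the same route as the paper: both functors in~\eqref{eq:nat-vect} preserve finite products, so the natural transformation $I$ is automatically compatible with the product structure, and naturality applied to the arrows $+\from B\oplus B\to B$ and $c\cdot\dashbk\from B\to B$ of $\FDVect$ yields linearity of $I_B$. The extra framing via lifting to $k$-modules as a finite-product theory is just a more abstract phrasing of the same argument, which your final paragraph then spells out exactly as the paper does.
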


\begin{proof}
In diagram~\eqref{eq:nat-vect}, both categories have finite products and
both functors preserve them.  Any natural transformation between such
functors is automatically monoidal with respect to the product structures.
From this it follows that whenever $\theta \from B_1 \times \cdots \times
B_n \to B$ is a linear map in $\FDVect$, and whenever $f_i \in \Vect(X,
B_i)$ for $i = 1, \ldots, n$, we have
\[
I_B(\theta \of (f_1, \ldots, f_n)) 
=
\theta(I_{B_1}(f_1), \ldots, I_{B_n}(f_n)).
\]
Let $B \in \FDVect$.  Taking $\theta$ to be first $+\from B \times B \to
B$, then $c \cdot \dashbk \from B \to B$ for each $c \in k$, shows that $I_B$
is linear.  
\done
\end{proof}

\begin{thm}     \label{thm:vect-main}
The codensity monad of $\FDVect \incl \Vect$ is isomorphic to the double
dualization monad $\blank^{**}$ on $\Vect$.  
\end{thm}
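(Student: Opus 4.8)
The plan is to show that the natural transformation $\blank^{**} \to T$ built in Proposition~\ref{propn:vect-transf} is a natural isomorphism that respects the units, and then to invoke Proposition~\ref{propn:mult-immutable}(\ref{part:immutable-iso}): since $G = (\FDVect \incl \Vect)$ is full and faithful and has a codensity monad (being the inclusion of an essentially small subcategory into a complete category), any unit-preserving natural isomorphism from a monad on $\Vect$ to $\mnd{T}^G$ is automatically an isomorphism of monads. This parallels the proof of Theorem~\ref{thm:main}, with immutability of multiplication playing the role there played by B\"orger's Corollary~\ref{cor:Boerger-monad} (cf.\ Remark~\ref{rmk:monad-strategies-set}).

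First I would check that, for each $X \in \Vect$, the map $\uf{U} \mapsto \int_X \idashbk \dee\uf{U}$ is a bijection $X^{**} \to T(X)$. Given $I \in T(X)$, uniqueness of a preimage is immediate from Proposition~\ref{propn:int-vect-ax}: its $B = k$ clause says $\int_X \idashbk \dee\uf{U}$ agrees with $\uf{U}$ in degree $k$, so any preimage must equal $I_k$. For existence, set $\uf{U} := I_k$, which by Lemma~\ref{lemma:linearity-for-free} is a linear map $X^* \to k$, hence an element of $X^{**}$. To see $I_B = \int_X \idashbk \dee\uf{U}$ for every $B \in \FDVect$, I would appeal to the uniqueness part of Proposition~\ref{propn:int-vect-ax}: it is enough to check $\beta \of I_B = \uf{U} \of (\beta \of \dashbk)$ for all $\beta \in B^*$, and since $\uf{U} = I_k$ this is exactly the naturality of the family $I$ along the morphism $\beta\from B \to k$ of $\FDVect$. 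Combined with the linearity and naturality in $X$ already recorded in Proposition~\ref{propn:vect-transf}, this makes~\eqref{eq:vect-transf} a natural isomorphism $\alpha\from \blank^{**} \to T$ of endofunctors of $\Vect$.

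Next I would verify the unit condition. Writing $\mnd{S} = (S, \eta^S, \mu^S)$ for the double dualization monad, $\eta^S_X$ sends $x \in X$ to evaluation at $x$, while $\eta^G_X$ sends $x$ to the natural family $(f \mapsto f(x))_B$; so $\alpha \of \eta^S = \eta^G$ amounts to $\int_X f \dee(\mathrm{ev}_x) = f(x)$ for all $B \in \FDVect$ and $f \in \Vect(X, B)$. The defining square of Proposition~\ref{propn:int-vect-ax} gives $\beta\bigl(\int_X f \dee\mathrm{ev}_x\bigr) = \mathrm{ev}_x(\beta \of f) = \beta(f(x))$ for every $\beta \in B^*$, and functionals separate the points of the finite-dimensional space $B$, so the two values coincide. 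Then $\alpha\from S \to T^G$ is a natural isomorphism with $\alpha \of \eta^S = \eta^G$, and Proposition~\ref{propn:mult-immutable}(\ref{part:immutable-iso}) completes the proof. The only place any real content sits is the componentwise surjectivity of~\eqref{eq:vect-transf}; the argument above reduces it to the naturality of $I$ along the maps to $k$ together with Lemma~\ref{lemma:linearity-for-free} (which is what guarantees $I_k$ is a legitimate element of $X^{**}$), and Proposition~\ref{propn:mult-immutable} relieves us of checking compatibility with multiplication by hand.
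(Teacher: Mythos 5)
Your proposal is correct and follows essentially the same route as the paper: it shows the transformation of Proposition~\ref{propn:vect-transf} is componentwise bijective (uniqueness from the $B = k$ clause of Proposition~\ref{propn:int-vect-ax}, existence by setting $\uf{U} = I_k$ via Lemma~\ref{lemma:linearity-for-free} and naturality of $I$ along the maps $\beta \from B \to k$), checks unit-compatibility by identifying $\eta^G_X(x)$ with evaluation at $x$, and invokes Proposition~\ref{propn:mult-immutable}(\ref{part:immutable-iso}) to get monad compatibility for free. This matches the paper's argument step for step.
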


\begin{proof}
First we show that the natural transformation $\blank^{**} \to T$ of
Proposition~\ref{propn:vect-transf} is a natural isomorphism, then we show
that it preserves the monad structure.

Let $X$ be a vector space and $I \in T(X)$.  We must show that there is a
unique $\uf{U} \in X^{**}$ such that $I = \int_X \idashbk \dee\uf{U}$.
Uniqueness is immediate from the last part of
Proposition~\ref{propn:int-vect-ax}.  For existence, put 
\[
\uf{U} = I_k \from \Vect(X, k) \to k,
\]
which by Lemma~\ref{lemma:linearity-for-free} is linear (that is, an
element of $X^{**}$).  Naturality of $I$ implies that the square in
Proposition~\ref{propn:int-vect-ax} commutes when $\int_X \idashbk
\dee\uf{U}$ is replaced by $I_B$, so by the uniqueness part of that
proposition, $\int_X \idashbk \dee\uf{U} = I_B$ for all $B \in \FDVect$.

Next, the isomorphism $\blank^{**} \to T$ respects the monad structures.
To prove this, we begin by checking directly that the isomorphism respects
the units of the monads: that is, whenever $X \in \Vect$, the triangle
\[
\xymatrix{
        &
X \ar[ld]_{\text{unit}} \ar[rd]^{\eta_X}        &       
        \\
X^{**}  &
        &
T(X) \ar[ll]|-{\iso}
}
\]
commutes.  Let $x \in X$.  Then $\eta_X(x) \in T(X)$ has $B$-component
\[
\begin{mapdefn}
\Vect(X, B)     &\to            &B      \\
f               &\longmapsto    &f(x)   
\end{mapdefn}
\]
($B \in \FDVect$).  In particular, its $k$-component $\eta_X(x)_k \in
X^{**}$ is evaluation of a functional at $x$, as required.

It now follows from
Proposition~\ref{propn:mult-immutable}(\ref{part:immutable-iso}) that the
natural isomorphism $\blank^{**} \to T$ is an isomorphism of monads.  
\done
\end{proof}

\begin{remark}  \label{rmk:monad-strategies-vs}
The strategy just used to show that the isomorphism is compatible with the
monad structures could also have been used in the case of sets and
ultrafilters (Theorem~\ref{thm:main}).  There we instead used B\"orger's
result that the ultrafilter endofunctor $U$ has a unique monad structure,
which itself was deduced from the fact that $U$ is the terminal endofunctor
on $\Set$ preserving finite coproducts.

Results similar to B\"orger's can also be proved for vector spaces, but
they are complicated by the presence of nontrivial endomorphisms of the
identity functor on $\Vect$ (namely, multiplication by any scalar $\neq
1$).  These give rise to nontrivial endomorphisms of every nonzero
endofunctor of $\Vect$.  Hence double dualization cannot be the terminal
$\oplus$-preserving endofunctor.  However, it \emph{is} the terminal
$\oplus$-preserving endofunctor $S$ equipped with a natural transformation
$1 \to S$ whose $k$-component is an isomorphism.  The proof is omitted.
\end{remark}

We have already seen that the notion of compact Hausdorff space arises
canonically from the notion of finiteness of a set: compact Hausdorff
spaces are the algebras for the codensity monad of $\FinSet \incl \Set$.
What is the linear analogue?

\begin{defn}
A \demph{linearly compact vector space} over $k$ is a $k$-vector space in
$\Top$ with the following properties:
\begin{enumerate}
\item the topology is \demph{linear}: the open affine subspaces form a
  basis for the topology 
\item every family of closed affine subspaces with the finite intersection
  property has nonempty intersection
\item the topology is Hausdorff.
\end{enumerate}
We write $\LCVect$ for the category of linearly compact vector spaces and
continuous linear maps. 
\end{defn}

For example, a finite-dimensional vector space can be given the structure
of a linearly compact vector space in exactly one way: by equipping it with
the discrete topology.

Linearly compact vector spaces were introduced by Lefschetz (Chapter~II,
Definition~27.1 of~\cite{Lefs}).  A good modern reference is the book of
Bergman and Hausknecht~\cite{BeHa}.

\begin{thm}     \label{thm:lcvect}
The category of algebras for the codensity monad of $\FDVect \incl \Vect$
is equivalent to $\LCVect$, the category of linearly compact vector spaces.
\end{thm}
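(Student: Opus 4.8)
The plan is to identify the category of algebras for the codensity monad $\mnd{T} \iso \blank^{**}$ with $\LCVect$ by constructing a comparison functor in each direction and checking they are mutually inverse equivalences. By Theorem~\ref{thm:vect-main}, a $\mnd{T}$-algebra is exactly a $\blank^{**}$-algebra, i.e.\ a vector space $A$ equipped with a linear map $h\from A^{**} \to A$ satisfying the two algebra axioms ($h \of \eta_A = \id_A$ and $h \of h^{**} = h \of \mu_A$, where $\mu$ is the standard multiplication of the double-dualization monad, given at $A$ by precomposition-transpose of $\eta_{A^*}$). The task is to show that such structures are precisely linearly compact topologies on $A$.

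First I would recall the key classical fact about linearly compact vector spaces (available in Lefschetz~\cite{Lefs} or Bergman--Hausknecht~\cite{BeHa}): the functor $V \mapsto V^*$ (continuous dual, with the linear topology of pointwise convergence on finite-dimensional subspaces of the predual) sets up a contravariant equivalence between $\Vect$ and $\LCVect$, under which a \emph{discrete} vector space $W$ goes to $W^*$ with its natural linearly compact topology, and conversely a linearly compact space $L$ goes to its dual $L^*$, which is discrete; moreover $L \iso (L^*)^*$ canonically. Thus every linearly compact vector space is of the form $W^*$ for a unique (up to iso) discrete $W$, and $\LCVect \eqv \Vect^{\op}$. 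Using this, I would define a functor $\LCVect \to \Vect^{\mnd{T}}$ as follows: given a linearly compact space $L$, write $L = W^*$; then the double dual $L^{**} = W^{***}$, and there is a canonical retraction $W^{***} \to W^*$ (the transpose of $\eta_W\from W \to W^{**}$, i.e.\ $\eta_W^* \from W^{***} \to W^*$), which is a $\blank^{**}$-algebra structure on $L = W^*$ because for any vector space the unit and this retraction exhibit $V^*$ as an algebra (a standard check: the ``triple dual'' identity $\eta_{V^*}$ is split by $\eta_V^*$, giving precisely the equations for a coalgebra-of-the-comonad / algebra-of-the-monad). In the other direction, given a $\blank^{**}$-algebra $(A, h)$, I would show $h$ makes $A$ a retract of $A^{**}$ in $\Vect$, hence $A \iso W^*$ for some $W$ (namely $W = (\ker h)^\perp$ or, more cleanly, $W = A^*$ with $A \incl A^{**}$ split by $h$ forcing $A \iso (A^*)^*$-as-a-summand; one must check the summand is all of the double dual, which uses that $h$ is an algebra map, not merely a retraction); equip $A$ with the linearly compact topology transported along this isomorphism, and check it is well defined independently of choices.

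The bookkeeping steps — that these assignments are functorial (a continuous linear map, resp.\ an algebra homomorphism, is sent to the appropriate kind of map), that they are mutually inverse up to natural isomorphism, and that on finite-dimensional objects everything reduces to the discrete topology as asserted in the example before the theorem — are routine once the dictionary is set up, so I would state them and defer the diagram chases. I would also remark that this result is essentially dual to Manes's theorem~\cite{ManeTTC} that $\CptHff$ is the category of algebras for the ultrafilter monad, and that the formal skeleton is the same: in both cases the monad in question is ``double dualization into the two-element / one-dimensional object'', and its algebras are the objects recoverable as duals, which are exactly the ``compact'' ones.

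\textbf{The main obstacle} I expect is the non-formal input: showing that \emph{every} $\blank^{**}$-algebra $(A,h)$ arises, as a linearly compact space, from its own dual — equivalently, that the algebra axioms force $A \to A^{**}$ to be not just a split mono but split \emph{onto a sub-double-dual that reconstructs $A$ topologically}. Purely categorically, being a retract of $V^{**}$ is weaker than being $W^*$ for discrete $W$; the extra strength comes from the associativity axiom $h \of h^{**} = h \of \mu_A$ together with the specific structure of $\mu$ for double dualization. Concretely I anticipate needing the classical characterization (from~\cite{Lefs} or~\cite{BeHa}) of linearly compact spaces as exactly the vector spaces that are \emph{complete} for their linear topology and have ``enough'' closed subspaces — i.e.\ the duality $\Vect \eqv \LCVect^{\op}$ must be invoked as a black box, since reproving it is outside the scope of this paper. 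Granting that duality, the algebra-axiom argument becomes a finite computation; without it, one would be forced to redevelop the theory of linearly compact spaces from scratch. So the honest structure of the proof is: \emph{cite the Lefschetz--Bergman--Hausknecht duality; translate the monad $\blank^{**}$ and its algebras through it; observe that ``$\blank^{**}$-algebra'' then reads as ``object of $\LCVect$''}; and the single genuinely substantive verification is matching the algebra identities to the completeness/Hausdorff conditions, which I would carry out by hand for $B = k^n$ and bootstrap via Lemma~\ref{lemma:linearity-for-free}.
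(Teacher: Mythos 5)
Your overall skeleton agrees with the paper's: both reduce the theorem to (a) the identification $\mnd{T} \iso \blank^{**}$ from Theorem~\ref{thm:vect-main}, (b) the Lefschetz duality $\Vect^\op \eqv \LCVect$ cited as a black box, and (c) the identification of the Eilenberg--Moore category of $\blank^{**}$ with $\Vect^\op$. Where you diverge is step (c): you propose to build mutually inverse comparison functors by hand, sending $W^* \mapsto (W^*, \eta_W^*)$ one way and, the other way, extracting from an algebra $(A,h)$ a vector space $W$ with $A \iso W^*$. The paper instead proves (c) in one stroke by showing that the dualization functor $\blank^*\from \Vect^\op \to \Vect$ is \emph{monadic}, verifying Beck's conditions: $\Vect^\op$ has coequalizers; dualization preserves them because $k$ is an injective object of $\Vect$, so $\blank^*$ is exact; and dualization reflects isomorphisms (dualize $0 \to \ker f \to X \to Y \to \coker f \to 0$ and conclude $\ker f \iso 0 \iso \coker f$). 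Monadicity immediately gives $\Vect^{\mnd{T}} \eqv \Vect^\op$, with no need to construct or analyse the inverse functor explicitly.

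The genuine gap in your write-up is precisely the step you flag as the obstacle, and your proposed resolution of it does not work as stated. Granting the Lefschetz--Bergman--Hausknecht duality does \emph{not} make the claim ``every $\blank^{**}$-algebra $(A,h)$ is isomorphic to some $(W^*,\eta_W^*)$'' a finite computation: that duality is a statement about topological vector spaces and says nothing about the Eilenberg--Moore category; the whole content of (c) is exactly the monadicity of $\blank^*$, which requires a real argument (the paper's uses the injectivity of $k$ in an essential way). Your candidate constructions ($W = (\ker h)^\perp$, or ``$W = A^*$ with the summand shown to be everything'') are left as gestures, and the tools you invoke to finish --- checking on $B = k^n$ and Lemma~\ref{lemma:linearity-for-free} --- are irrelevant here: they belong to the proof of Theorem~\ref{thm:vect-main}, i.e.\ to identifying the monad, which is already done, not to identifying its algebras. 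A direct construction along your lines can be pushed through, but it amounts to reproving monadicity by hand; as written, the associativity-axiom argument that is supposed to force $A$ to be a full dual (rather than merely a retract of $A^{**}$) is missing, so the proof is incomplete at its only substantive point.
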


\begin{proof}
The codensity monad is the double dualization monad, which by definition is
the monad obtained from the dualization functor $\blank^*\from \Vect^\op
\to \Vect$ and its left adjoint.  The dualization functor is, in fact,
monadic.  A proof can be extracted from Linton's proof that the dualization
functor on Banach spaces is monadic~\cite{LintAFS3}.  Alternatively, we can
use the following direct argument, adapted from a proof by
Trimble~\cite{TrimMO}.

We apply the monadicity theorem of Beck.  First, $\Vect^\op$ has all
coequalizers.  Second, the dualization functor preserves them:
for the object $k$ of the abelian category $\Vect$ is injective, so by
Lemma~2.3.4 of~\cite{Weib}, the dualization functor is exact.  Third,
dualization reflects isomorphisms.  Indeed, let $f\from X \to Y$ be a
linear map such that $f^*\from Y^* \to X^*$ is an isomorphism.  Dualizing
the exact sequence
\[
0 \to \ker f \to X \toby{f} Y \to \coker f \to 0
\]
yields another exact sequence, in which the middle map is an isomorphism.
Hence $(\ker f)^* \iso 0 \iso (\coker f)^*$.  From this it follows that $\ker
f \iso 0 \iso \coker f$, so $f$ is an isomorphism, as required. 

On the other hand, it was shown by Lefschetz that $\Vect^\op \eqv \LCVect$
(Chapter~II, number~29 of \cite{Lefs}; or see Proposition~24.8
of~\cite{BeHa}).  This proves the theorem.  
\done
\end{proof}

A slightly more precise statement can be made.  Lefschetz's equivalence
$\Vect^\op \to \LCVect$ sends a vector space $X$ to its dual $X^*$,
suitably topologized.  Hence, under the equivalence $\Vect^\mnd{T} \eqv
\LCVect$, the forgetful functor $U^\mnd{T}\from \Vect^\mnd{T} \to \Vect$
corresponds to the obvious forgetful functor $\LCVect \to \Vect$.

In summary,
\begin{center}
\it
sets are to compact Hausdorff spaces\\
as\\
vector spaces are to linearly compact vector spaces.  
\end{center}
It seems not to be known whether this is part of a larger pattern.  Is it
the case, for example, that for all algebraic theories, the codensity monad
of the inclusion
\[
\text{(finitely presentable algebras)}
\incl
\text{(algebras)}
\]
is equivalent to a suitably-defined category of `algebraically compact'
topological algebras?

\section{Ultraproducts}
\label{sec:ups}

The ultraproduct construction, especially important in model theory, can
also be seen as a codensity monad.

Let $X$ be a set, $S = (S_x)_{x \in X}$ a family of sets, and $\uf{U}$ an
ultrafilter on $X$.  The \demph{ultraproduct} $\ulp{\uf{U}} S$ is the
colimit of the functor $(\uf{U}, \sub)^\op \to \Set$ defined on objects by
$H \mapsto \prod_{x \in H} S_x$ and on maps by projection.
(See~\cite{FaHa} or Section~1.2 of~\cite{Eklo}).  Explicitly,
\[
\ulp{\uf{U}} S
=
\biggl( \sum_{H \in \uf{U}} \prod_{x \in H} S_x \biggr) \biggm/ \sim
\]
where $\sum$ means coproduct and
\[
(s_x)_{x \in H} \sim (t_x)_{x \in K} 
\iff
\{ x \in H \cap K \such s_x = t_x \} \in \uf{U}.
\]
For a trivial example, if $\uf{U}$ is the principal ultrafilter on $x$ then
$\ulp{\uf{U}} S = S_x$.

Logic texts often assume that all the sets $S_x$ are nonempty \cite{ChKe,
HodgMT}, in which case the ultraproduct can be described more
simply as $(\prod_{x \in X} S_x)/\!\!\sim$.  The appendix of
Barr~\cite{BarrMS} explains why the present definition is the right one in the
general case.

Ultraproducts can also be understood sheaf-theoretically (as in~2.6.2
of~\cite{Schou}).  A family $(S_x)_{x \in X}$ of sets amounts to a sheaf
$S$ on the discrete space $X$, with stalks $S_x$.  The unit map $\eta_X
\from X \to U(X)$ embeds the discrete space $X$ into its Stone--\v{C}ech
compactification, and pushing forward gives a sheaf $(\eta_X)_* S$ on
$U(X)$.  The stalk of this sheaf over $\uf{U}$ is exactly the ultraproduct
$\ulp{\uf{U}} S$. 

Since the category $(\uf{U}, \sub)^\op$ is filtered, the definition of
ultraproduct can be generalized from sets to the objects of any other
category $\cat{E}$ with small products and filtered colimits.  Thus, a
family $S = (S_x)_{x \in X}$ of objects of $\cat{E}$, indexed over a set
$X$, gives rise to a new family $(\ulp{\uf{U}} S)_{\uf{U} \in U(X)}$ of
objects of $\cat{E}$.

For the rest of this section, fix a category $\cat{E}$ with small products
and filtered colimits.

Let $\Fam(\cat{E})$ be the category in which an object is a set $X$
together with a family $(S_x)_{x \in X}$ of objects of $\cat{E}$, and a map
$(S_x)_{x \in X} \to (R_y)_{y \in Y}$ is a map of sets $f\from X \to Y$
together with a map $\phi_x\from R_{f(x)} \to S_x$ for each $x \in X$.
(Note the direction of the last map; this marks a difference from other
authors' use of the $\Fam$ notation.)  Let $\FinFam(\cat{E})$ be the full
subcategory consisting of those families $(S_x)_{x \in X}$ for which the
indexing set $X$ is finite.

The main theorem states, essentially, that the codensity monad of the
inclusion $\FinFam(\cat{E}) \incl \Fam(\cat{E})$ is given on objects by 
\begin{equation}        \label{eq:up-objs}
(S_x)_{x \in X} \longmapsto \Bigl( \ulp{\uf{U}} S \Bigr)_{\uf{U} \in U(X)}.  
\end{equation}
So the ultraproduct construction arises naturally from the notion of
finiteness of a family of objects.

In particular, the ultraproduct construction determines a monad.
This monad, the \demph{ultraproduct monad} $\mnd{V}$ on $\Fam(\cat{E})$,
was first described by Ellerman~\cite{Elle} and Kennison~\cite{Kenn}.  We
review their definition, then prove that the codensity monad of
$\FinFam(\cat{E}) \incl \Fam(\cat{E})$ exists and is isomorphic to
$\mnd{V}$.

Our first task is to define the underlying functor $V\from \Fam(\cat{E})
\to \Fam(\cat{E})$.  On objects, $V$ is given by~\eqref{eq:up-objs}.  Now
take a map $(f, \phi)\from (S_x)_{x \in X} \to (R_y)_{y \in Y}$ in
$\Fam(\cat{E})$, which by definition consists of maps
\[
X \toby{f} Y,
\qquad
R_{f(x)} \toby{\phi_x} S_x 
\quad
(x \in X).
\]
Its image under $V$ consists of maps
\[
U(X) \toby{f_*} U(Y),
\qquad
\ulp{f_*\uf{U}} R \to \ulp{\uf{U}} S 
\quad
(\uf{U} \in U(X)).
\]
The first of these maps, $f_*$, is $U(f)$.  The second is the map
\begin{equation}        \label{eq:up-func-map}
\colt{K \in f_*\uf{U}} \prod_{y \in K} R_y
\to
\colt{H \in \uf{U}} \prod_{x \in H} S_x
\end{equation}
whose $K$-component is the composite
\[
\xymatrix@C+5em{
\displaystyle
\prod\limits_{y \in K} R_y     
\ar[r]^-{\prod\limits_{y \in K} (\phi_x)_{x \in f^{-1}(y)}}       &
\displaystyle
\prod\limits_{y \in K} \prod\limits_{x \in f^{-1}(y)} S_x
\iso
\prod\limits_{x \in f^{-1}K} S_x       
\ar[r]^-{\copr_{f^{-1}K}} &
\displaystyle
\ulp{\uf{U}}S
}
\]
where $\copr$ denotes a coprojection.  In the case $\cat{E} = \Set$, the
map~\eqref{eq:up-func-map} sends the equivalence class of a family
$(r_y)_{y \in K}$ to the equivalence class of the family $\bigl(
\phi_x(r_{f(x)}) \bigr)_{x \in f^{-1}K}$.

Next we describe the unit of the ultraproduct monad.  Its component at an
object $(S_x)_{x \in X}$ consists of maps
\[
X \toby{\eta_X} U(X),
\qquad
\ulp{\eta_X(x)} S \to S_x
\quad
(x \in X).
\]
The first map is the unit of the ultrafilter monad $\mnd{U}$, in which
$\eta_X(x)$ is the principal ultrafilter on $x$.  The second map is the
canonical isomorphism.  

Proposition~\ref{propn:mult-immutable} will save us from needing to know the
multiplication of the ultraproduct monad.

To prove the main theorem, our first step is to recast the definition of
ultraproduct.  The usual definition treats an ultrafilter as a collection
of subsets; but to connect with the codensity characterization of
ultrafilters, we need a definition of ultraproduct that treats ultrafilters
as integration operators.

\begin{lemma}   \label{lemma:ulp-int}
Let $\cat{B}$ be a full subcategory of $\FinSet$ containing at least one
set with at least three elements.  Let $(S_x)_{x \in X}$ be an object of
$\Fam(\cat{E})$, and let $\uf{U} \in U(X)$.  Then there is a canonical
isomorphism
\begin{equation}        \label{eq:two-ulps}
\ulp{\uf{U}} S
\iso
\colt{\substack{B \in \cat{B},\\ f \from X \to B}}
\prod_{\ x \in f^{-1}(\int f \dee\uf{U})} S_x
\end{equation}
where the right-hand side is a colimit over the category of elements of
$\Set(X, \dashbk)\from \cat{B} \to \Set$.  
\end{lemma}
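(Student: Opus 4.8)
The plan is to compare both sides of~\eqref{eq:two-ulps} by showing they have the same universal property, namely that of the ultraproduct, realised as a colimit over $(\uf{U}, \sub)^\op$. First I would recall that the ultraproduct $\ulp{\uf{U}}S$ is by definition $\colt{H \in \uf{U}} \prod_{x \in H} S_x$, a filtered colimit over $(\uf{U}, \sub)^\op$. The right-hand side of~\eqref{eq:two-ulps} is a colimit over the category of elements $\elt{\Set(X, \dashbk)}$ of the functor $\Set(X, \dashbk)\from \cat{B} \to \Set$; an object is a pair $(B, f)$ with $B \in \cat{B}$ and $f \from X \to B$, and a morphism $(B, f) \to (C, g)$ is a map $\theta \from B \to C$ in $\cat{B}$ with $\theta \of f = g$. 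So the strategy is to exhibit a final functor (in the sense of colimit-cofinality) between $(\uf{U}, \sub)^\op$ and $\elt{\Set(X, \dashbk)}$, compatible with the two diagrams, and conclude that the colimits agree.

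The functor I would use goes $(\uf{U}, \sub)^\op \to \elt{\Set(X, \dashbk)}$ as follows. Fix once and for all a set $\Omega \in \cat{B}$ with at least three elements, say $\{0, 1, \ldots\}$; given $H \in \uf{U}$, send it to the pair $(\Omega, \chi_H)$ where $\chi_H \from X \to \Omega$ is the characteristic function of~\eqref{eq:char-fn-defn}. Since $H \in \uf{U}$, we have $\int_X \chi_H \dee\uf{U} = 1$ by~\eqref{eq:char-fn-int}, so $\chi_H^{-1}(\int \chi_H \dee\uf{U}) = H$, and hence this object of $\elt{\Set(X, \dashbk)}$ contributes exactly the term $\prod_{x \in H} S_x$ to the right-hand colimit — matching the left-hand diagram on the nose. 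The inclusion $H' \sub H$ in $\uf{U}$ does not literally give a morphism $(\Omega, \chi_H) \to (\Omega, \chi_{H'})$ in the category of elements, so the cleanest route is to keep the source diagram as is, define the comparison as a cocone rather than as a strict functor, and instead verify directly that $\{\prod_{x \in H} S_x\}_{H \in \uf{U}}$ is a cofinal subdiagram of $\{\prod_{x \in f^{-1}(\int f \dee\uf{U})} S_x\}_{(B,f)}$ via these maps.

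Concretely, cofinality amounts to two checks. First, every object $(B, f)$ of $\elt{\Set(X, \dashbk)}$ maps, in the diagram, into some $\prod_{x \in H} S_x$ with $H \in \uf{U}$: indeed take $H = f^{-1}(\int_X f \dee\uf{U}) \in \uf{U}$ — this membership is precisely the defining property of $\int f \dee\uf{U}$ — and observe that the projection $\prod_{x \in f^{-1}(\int f \dee\uf{U})} S_x$ is literally the term indexed by $(B,f)$, so it maps identically to the term indexed by $(\Omega, \chi_H)$ once we check $H = \chi_H^{-1}(\int \chi_H \dee\uf{U})$, which we did above. Second, the connecting maps between such terms in the colimit system are compatible: given $H' \sub H$ in $\uf{U}$, the two composites from $\prod_{x \in H} S_x$ — one via any zigzag in $\elt{\Set(X, \dashbk)}$ realising the projection down to $\prod_{x \in H'} S_x$, and the direct projection — agree, because all maps in the diagram are built from products and projections and the relevant square of index sets commutes. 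The main obstacle is exactly this bookkeeping: matching the morphisms of $(\uf{U}, \sub)^\op$ with zigzags of $\theta$'s in the category of elements so that the two cocones induce mutually inverse maps on colimits. Once cofinality is established, $\ulp{\uf{U}} S = \colt{H \in \uf{U}} \prod_{x \in H} S_x \iso \colt{(B,f)} \prod_{x \in f^{-1}(\int f \dee\uf{U})} S_x$, and since this argument uses only that $\cat{B}$ contains a set with at least three elements — so that $\int_X \idashbk \dee\uf{U}$ is defined and recovers $\uf{U}$ via Theorem~\ref{thm:main} and Proposition~\ref{propn:uf-concise} — the isomorphism is canonical, completing the proof.
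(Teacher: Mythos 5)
Your overall strategy --- comparing the two colimits via the assignment $H \mapsto (\Omega, \chi_H)$ and the identity $\chi_H^{-1}\bigl(\int \chi_H \dee\uf{U}\bigr) = H$ --- is the same as the paper's, but the step you defer as ``bookkeeping'' is the actual mathematical content of the lemma, and it is precisely where the three-element hypothesis enters (not, as you suggest, to make $\int_X \idashbk \dee\uf{U}$ defined or to invoke Theorem~\ref{thm:main}: integration against an ultrafilter is defined for every simple function, and Theorem~\ref{thm:main} plays no role here). Two things must be exhibited \emph{inside} the category of elements. First, for any $(B,f)$ with $f^{-1}(\int f \dee\uf{U}) = H$, a morphism identifying its term with that of $(\Omega,\chi_H)$: this is post-composition with $\chi_{\{\int f \dee\uf{U}\}} \from B \to \Omega$, which is a map $(B,f) \to (\Omega,\chi_H)$ inducing the identity on terms; without it, the comparison map out of $\ulp{\uf{U}}S$ is not even well defined, since \latin{a priori} it depends on which $(B,f)$ represents $H$. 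Second, and crucially, for $H \sub H'$ in $\uf{U}$ one must realise the transition map $\prod_{x \in H'} S_x \to \prod_{x \in H} S_x$ of the ultraproduct system by morphisms of $\elt{\Set(X,\dashbk)}$ connecting $(\Omega,\chi_{H'})$ and $(\Omega,\chi_H)$. The paper's device is the map $f \from X \to B$ taking three distinct values $a,b,c$ on $H$, $H' \without H$ and $X \without H'$ respectively; the two maps $\chi_{\{a\}}, \chi_{\{a,b\}} \from B \to \Omega$ make $(B,f)$ the apex of a span over $(\Omega,\chi_H)$ and $(\Omega,\chi_{H'})$, whose induced maps on terms are the identity and the projection $\prod_{H'} \to \prod_{H}$, and the cocone identities then give exactly the required compatibility. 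A two-element $\Omega$ does not suffice for this: the only characteristic functions of the form $g \of \chi_H$ are $\chi_\emptyset$, $\chi_H$, $\chi_{X \without H}$, $\chi_X$, so no direct morphism relates $(\Omega,\chi_H)$ and $(\Omega,\chi_{H'})$, and your appeal to ``all maps are built from products and projections'' does not produce the needed span.

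A smaller point: as you yourself note, $H \mapsto (\Omega,\chi_H)$ is not a functor $(\uf{U},\sub)^\op \to \elt{\Set(X,\dashbk)}$, so a finality argument in the technical sense would further require checking nonemptiness \emph{and connectedness} of the relevant comma categories, which you do not address. The paper sidesteps this by constructing the two cocones explicitly and verifying that the induced maps on colimits are mutually inverse; once you have the span construction above (and the well-definedness morphism $\chi_{\{\int f \dee\uf{U}\}}$), that is the cleaner way to complete your argument.
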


\begin{proof}
Write $\aulp{\uf{U}} S$ for the right-hand side of~\eqref{eq:two-ulps}.
Thus, whenever $X \toby{f} B \toby{g} B'$ with $B, B' \in \cat{B}$,
we have a commutative triangle
\begin{equation}        \label{eq:up-cocone}
\begin{centerdiag}
\xymatrix@R-6ex@C+2em{
\displaystyle\prod_{x \in (gf)^{-1}(\int gf \dee\uf{U})} S_x    
\ar[rd]^-{\copr_{gf}}
\ar[dd]_{\pr}   &
        \\
        &
\displaystyle\aulp{\uf{U}} S    \\
\displaystyle\prod_{x \in f^{-1}(\int f \dee\uf{U})} S_x
\ar[ru]_-{\copr_f}
}
\end{centerdiag}
\end{equation}
where the vertical map is a product projection and the other maps are
colimit coprojections.

Define $\theta\from \ulp{\uf{U}} S \to \aulp{\uf{U}} S$ as follows.
For each $H \in \uf{U}$, choose some $B \in \cat{B}$ and $f\from X \to B$
such that $f^{-1}(\int f \dee\uf{U}) = H$; then the $H$-component of
$\theta$ is
\[
\theta_H = \copr_f \from
\prod_{x \in H} S_x 
\to
\aulp{\uf{U}} S.
\]
We have to check (i)~that $\theta_H$ is well-defined for each individual
$H$, and (ii)~that $\theta_H$ is natural in $H$, thus defining a map
$\theta\from \ulp{\uf{U}} S \to \aulp{\uf{U}} S$.

For~(i), let $H \in \uf{U}$.  Choose $\Omega \in \cat{B}$ with at least two
elements, say $0$ and $1$, and define characteristic functions by the usual
formula~\eqref{eq:char-fn-defn}.  There is at least one pair $(B, f)$ such
that $f^{-1}(\int f \dee\uf{U}) = H$: for example, $(\Omega, \chi_H)$.  Let
$(B, f)$ be another such pair.  In the triangle~\eqref{eq:up-cocone} with
$g = \chi_{\{\int f \dee\uf{U}\}}$, we have $gf = \chi_H$, so the
vertical map is an identity.  Hence $\copr_{\chi_H} = \copr_{f}$, as
required.

For~(ii), let $H, H' \in \uf{U}$ with $H \sub H'$; we must prove the
commutativity of
\begin{equation}        \label{eq:phi-nat}
\begin{centerdiag}
\xymatrix@R-6ex@C+2em{
\displaystyle\prod_{x \in H'} S_x    
\ar[rd]^-{\theta_{H'}}
\ar[dd]_{\pr}   &
        \\
        &
\displaystyle\aulp{\uf{U}} S.   \\
\displaystyle\prod_{x \in H} S_x
\ar[ru]_-{\theta_H}
}
\end{centerdiag}
\end{equation}
Choose $B \in \cat{B}$ with at least three elements, say $a$,
$b$ and $c$, and define $f\from X \to B$ by
\[
f(x)
=
\begin{cases}
a       &\text{if } x \in H        \\
b       &\text{if } x \in H'\without H  \\
c       &\text{if } x \not\in H'.
\end{cases}
\]
Then $\chi_{\{a, b\}} \of f = \chi_{H'}$, and the commutative
triangle~\eqref{eq:up-cocone} with $g = \chi_{\{a, b\}}$ is 
exactly~\eqref{eq:phi-nat}.  

Let $\tilde{\theta}\from \aulp{\uf{U}} S \to \ulp{\uf{U}} S$ be the unique map
such that, whenever $B \in \cat{B}$ and $f\from X \to B$, the $(B,
f)$-component of $\tilde{\theta}$ is the coprojection
\[
\prod_{x \in f^{-1}(\int f \dee\uf{U})} S_x 
\to
\colt{H \in \uf{U}} \prod_{x \in H} S_x
=
\ulp{\uf{U}} S.
\]
It is straightforward to check that $\tilde{\theta}$ is a two-sided inverse
of $\theta$. 
\done
\end{proof}

We now turn to the category $\Fam(\cat{E})$.  Recall that any functor
$\Sigma\from \cat{A}^\op \to \CAT$ has a category of elements (or
Grothendieck construction) $\elt{\Sigma}$, whose objects are pairs $(A, S)$
with $A \in \cat{A}$ and $S \in \Sigma(A)$, and whose maps $(A, S) \to (B,
R)$ are pairs $(f, \phi)$ with $f\from A \to B$ and $\phi\from S \to f^*
R$.  (We write $f^*$ for $\Sigma(f)$.)  It comes with a projection functor
$\pr\from \elt{\Sigma} \to \cat{A}$.  For example, there is a functor
$\Sigma\from \Set^\op \to \CAT$ given on objects by
\begin{equation}        \label{eq:slice}
\Sigma(X) = \bigl(\cat{E}^X\bigr)^\op
\end{equation}
and on maps $f\from X \to Y$ by taking $\Sigma(f)$ to be the dual of the
reindexing functor $f^*\from \cat{E}^Y \to \cat{E}^X$.  Its category of
elements is $\Fam(\cat{E})$.

To compute the codensity monad of $\FinFam(\cat{E}) \incl \Fam(\cat{E})$,
we will need to know about limits in $\Fam(\cat{E})$.  (Compare Section~2
of~\cite{Kenn}.)  We work with categories of elements more generally, using
the following standard lemma.

\begin{lemma}   \label{lemma:elts-lim}
Let $\scat{I}$ be a category.  Let $\cat{A}$ be a category with limits over
$\scat{I}$.  Let $\Sigma\from \cat{A}^\op \to \CAT$ be a functor such that
$\Sigma(A)$ has limits over $\scat{I}$ for each object $A$ of $\cat{A}$,
and $f^*$ preserves limits over $\scat{I}$ for each map $f$ in $\cat{A}$.
Then $\elt{\Sigma}$ has limits over $\scat{I}$, and the projection
$\elt{\Sigma} \to \cat{A}$ preserves them.
\end{lemma}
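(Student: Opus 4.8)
The plan is to use the standard construction of limits in a category of elements (a Grothendieck construction): first form the limit of the underlying diagram down in $\cat{A}$, then transport the whole diagram into the fibre over that limit and take a limit there.

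Concretely, fix a diagram $D\from\scat{I}\to\elt{\Sigma}$ and write $D(i)=(A_i,S_i)$ with $A_i\in\cat{A}$, $S_i\in\Sigma(A_i)$, and $D(u)=(a_u,\phi_u)$ for $u\from i\to j$ in $\scat{I}$, where $a_u\from A_i\to A_j$ and $\phi_u\from S_i\to a_u^*S_j$ in $\Sigma(A_i)$ (writing $f^*$ for $\Sigma(f)$, so that composition in $\elt{\Sigma}$ is $(g,\psi)\of(f,\phi)=(g\of f,\,(f^*\psi)\of\phi)$). First I would take a limit $A$ of $\pr\of D\from\scat{I}\to\cat{A}$, with projections $p_i\from A\to A_i$, so that $a_u\of p_i=p_j$ for all $u$. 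Applying $p_i^*$ to $\phi_u$ and using $p_i^*a_u^*=(a_u\of p_i)^*=p_j^*$ produces maps $p_i^*\phi_u\from p_i^*S_i\to p_j^*S_j$ in $\Sigma(A)$; functoriality of $D$ and of $\Sigma$ makes $i\mapsto p_i^*S_i$ into a functor $\scat{I}\to\Sigma(A)$, which has a limit $S$ (with projections $q_i\from S\to p_i^*S_i$) since $\Sigma(A)$ has $\scat{I}$-limits. The candidate limit of $D$ is then $(A,S)$, with cone $\bigl((p_i,q_i)\from(A,S)\to(A_i,S_i)\bigr)_{i}$.

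The verification has two halves. That $(p_i,q_i)$ is a cone follows by unwinding $D(u)\of(p_i,q_i)$ through the composition rule: the first component is $a_u\of p_i=p_j$, and the second is $(p_i^*\phi_u)\of q_i=q_j$, which is exactly the cone identity for $(q_i)$. For the universal property, given any cone $\bigl((r_i,\rho_i)\from(B,R)\to(A_i,S_i)\bigr)_i$ over $D$, the $r_i$ form a cone over $\pr\of D$, yielding a unique $r\from B\to A$ with $p_i\of r=r_i$; since $r^*$ preserves $\scat{I}$-limits, $r^*S$ with projections $r^*(q_i)$ is a limit of $i\mapsto r^*p_i^*S_i=r_i^*S_i$, the $\rho_i$ assemble into a cone over this functor (the fibrewise part of the cone condition on $(r_i,\rho_i)$), and so there is a unique $\rho\from R\to r^*S$ in $\Sigma(B)$ with $r^*(q_i)\of\rho=\rho_i$. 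Then $(r,\rho)\from(B,R)\to(A,S)$ is the required factorization, and uniqueness of $r$ and of $\rho$ gives uniqueness of $(r,\rho)$. Finally $\pr$ carries this cone to $(p_i)$, which is the limit cone in $\cat{A}$ by construction, so $\pr$ preserves $\scat{I}$-limits.

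I do not expect any genuine obstacle. The only thing demanding care is the bookkeeping forced by the variance in $\Sigma\from\cat{A}^\op\to\CAT$ — in particular the composition law in $\elt{\Sigma}$ and the contravariant identities $p_i^*a_u^*=p_j^*$ and $r^*p_i^*=r_i^*$ — and the discipline of invoking the universal property first downstairs in $\cat{A}$ and then upstairs in the relevant fibre, using throughout the hypotheses that $\Sigma(A)$ has $\scat{I}$-limits and that reindexing functors preserve them.
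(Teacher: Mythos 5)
Your proposal is correct and follows essentially the same route as the paper: take the limit $(A \toby{p_i} A_i)$ in $\cat{A}$, reindex to get the diagram $(p_i^* S_i)$ in $\Sigma(A)$, and take its limit there. The paper merely sketches this construction, while you additionally spell out the universal-property check, correctly locating where the hypothesis that the reindexing functors preserve $\scat{I}$-limits is used.
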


\begin{proof}
Let $(A_i, S_i)_{i \in \scat{I}}$ be a diagram over $\scat{I}$ in
$\elt{\Sigma}$; thus, $A_i \in \cat{A}$ and $S_i \in \Sigma(A_i)$ for each
$i \in \scat{I}$.  Take a limit cone $(A \toby{p_i} A_i)_{i \in \scat{I}}$
in $\cat{A}$.  We obtain a diagram $\bigl(p_i^*(S_i)\bigr)_{i \in
  \scat{I}}$ in $\Sigma(A)$, and its limit in $\Sigma(A)$ is a limit of the
original diagram in $\elt{\Sigma}$.  \done
\end{proof}

Taking the category of elements is a functorial process: given $\Sigma\from
\cat{A}^\op \to \CAT$ and $G\from \cat{B} \to \cat{A}$, we obtain a
commutative square
\[
\xymatrix{
\elt{\Sigma\of G}
\ar[r]^-{G'}
\ar[d]_{\pr}    &
\elt{\Sigma}
\ar[d]^{\pr}    \\
\cat{B}
\ar[r]_{G}      &
\cat{A}
}
\]
where $G'(B, R) = (GB, R)$ whenever $B \in \cat{B}$ and $R \in \Sigma(GB)$.
For example, if $\Sigma$ is the functor~\eqref{eq:slice} and $G$ is the
inclusion $\FinSet \incl \Set$ then $G'$ is the inclusion $\FinFam(\cat{E})
\incl \Fam(\cat{E})$.  The next two results will enable us to compute the
codensity monad of $G'$.

\begin{propn}   \label{propn:elts-coden}
Let $G$ be a functor from an essentially small category $\cat{B}$ to a
complete category $\cat{A}$.  
Let $\Sigma\from \cat{A}^\op \to \CAT$ be a functor such that for each
object $A$ of $\cat{A}$, the category $\Sigma(A)$ is complete, and for each
map $f\from A \to A'$ in $\cat{A}$, the functor $f^* = \Sigma(f) \from
\Sigma(A') \to \Sigma(A)$ has a left adjoint $f_!$.  

Then $G'\from \elt{\Sigma\of G} \to \elt{\Sigma}$ has a codensity monad,
given at $(A, S) \in \elt{\Sigma}$ by
\[
T^{G'}(A, S) 
=
\biggl(
T^G(A),
\lt{\substack{B \in \cat{B},\\ f\from A \to GB}} \pr_f^* f_! (S)
\biggr)
\]
where the limit is over the category of elements of $\cat{A}(A,
G\dashbk)\from \cat{B} \to \Set$, and $\pr_f\from T^G(A) \to G(B)$ is
projection. 
\end{propn}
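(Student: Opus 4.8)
The plan is to compute the end that defines the codensity monad of $G'$ by reindexing it as a limit over a category of elements, and then to cut that limit down to a small one by means of an initial functor. First, $\elt{\Sigma}$ is complete: this is Lemma~\ref{lemma:elts-lim} applied to $\Sigma$ itself, since $\cat{A}$ is complete, each $\Sigma(A)$ is complete, and each $f^*$ preserves limits (it has the left adjoint $f_!$). Hence, if the end exists it is computed pointwise, and by the standard reindexing of a codensity-type end as a limit over a category of elements,
\[
T^{G'}(A,S)
\ =\
\int_{(B,R)\in\elt{\Sigma\of G}} \bigl[\, \elt{\Sigma}\bigl((A,S),\ G'(B,R)\bigr),\ G'(B,R) \,\bigr]
\ =\
\lim_{\scat{J}} G' ,
\]
where $\scat{J}$ is the category of elements of the functor $\elt{\Sigma}\bigl((A,S),\ G'(\dashbk)\bigr)\from\elt{\Sigma\of G}\to\Set$, and $G'$ on the right denotes the evident functor $\scat{J}\to\elt{\Sigma}$. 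Transposing across $f_!\dashv f^*$, an object of $\scat{J}$ is a quadruple $(B,R,f,\psi)$ with $B\in\cat{B}$, $R\in\Sigma(GB)$, $f\from A\to GB$ in $\cat{A}$, and $\psi\from f_!(S)\to R$ in $\Sigma(GB)$; its image under $G'$ is $(GB,R)$, and the morphisms of $\scat{J}$ are the evident ones lying over the morphisms of $\elt{\Sigma\of G}$.

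Next I introduce the functor $s\from\scat{I}\to\scat{J}$, where $\scat{I}$ is the category of elements of $\cat{A}(A,G\dashbk)\from\cat{B}\to\Set$, defined by $s(B,f)=\bigl(B,\ f_!(S),\ f,\ \id_{f_!(S)}\bigr)$; a morphism $g\from(B,f)\to(B',f')$ of $\scat{I}$ (so $Gg\of f=f'$) is sent to the morphism of $\scat{J}$ lying over $g$ whose $\Sigma$-component is the adjunction unit $f_!(S)\to(Gg)^*(Gg)_! f_!(S)=(Gg)^* f'_!(S)$. This $s$ is a section of the projection $\scat{J}\to\scat{I}$, $(B,R,f,\psi)\mapsto(B,f)$, and the crucial claim is that $s$ is \emph{initial}. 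Indeed, for a fixed object $e=(B,R,f,\psi)$ of $\scat{J}$, a morphism $s(B',f')\to e$ lying over a morphism $g\from(B',f')\to(B,f)$ of $\scat{I}$ is forced, on its $\Sigma$-component, to be the transpose $f'_!(S)\to(Gg)^* R$ of $\psi$ across $(Gg)_!\dashv(Gg)^*$, and conversely this choice always satisfies the required equation. Thus $s\downarrow e$ is isomorphic to the slice category $\scat{I}/(B,f)$; to see that the morphisms correspond, one uses the standard compatibility of adjoint transposition with a composite adjunction $(Gg)_!(Gh)_!\dashv(Gh)^*(Gg)^*$. A slice has a terminal object, so $s\downarrow e$ is nonempty and connected, and therefore $s$ is initial.

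Because $s$ is initial and $\scat{I}$ is essentially small, $\lim_{\scat{J}} G'$ exists and equals $\lim_{(B,f)\in\scat{I}} G'\bigl(s(B,f)\bigr)$; in particular $G'$ has a codensity monad. Finally, $G'\of s$ sends $(B,f)$ to $(GB,\ f_!(S))$ and $g$ to $\bigl(Gg,\ \text{unit}\bigr)$, so the explicit pointwise construction of limits in $\elt{\Sigma}$ from the proof of Lemma~\ref{lemma:elts-lim} evaluates this limit as follows: form the limit $T^G(A)$ of $(B,f)\mapsto GB$ in $\cat{A}$, with projections $\pr_f\from T^G(A)\to GB$, and then form, inside $\Sigma(T^G(A))$, the limit of the diagram $(B,f)\mapsto\pr_f^*\bigl(f_!(S)\bigr)$ whose transition maps are the images under $\pr_f^*$ of the units. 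This is precisely $\Bigl(T^G(A),\ \lt{\substack{B\in\cat{B},\\ f\from A\to GB}}\pr_f^* f_!(S)\Bigr)$, as required; and since every step is natural in $(A,S)$, this also identifies the functor $T^{G'}$, the monad structure being the one it automatically carries as a right Kan extension along itself. The main obstacle is the initiality of $s$: the real content is that a morphism of $\scat{J}$ into an object of the form $s(\dashbk)$ has its $\Sigma$-component completely determined, by adjoint transposition, from its base component, so that $s\downarrow e$ collapses to an ordinary slice; the supporting transpose identities are routine but must be tracked with care.
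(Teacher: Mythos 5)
Your proof is correct and takes essentially the same route as the paper: both arguments transpose the structure map $\phi\from S \to f^*R$ across $f_! \dashv f^*$, observe that the ``free'' object $(f_!S, \id)$ is the universal choice in its fibre so that the defining limit collapses to a limit of $(B,f)\mapsto (GB, f_!S)$ over the category of elements of $\cat{A}(A,G\dashbk)$, and then evaluate that limit via Lemma~\ref{lemma:elts-lim}. The only difference is packaging: the paper expresses the collapse as an iterated limit whose inner limit is over the coslice $f_!S/\Sigma(GB)$, which has an initial object, whereas you verify directly that the section $(B,f)\mapsto(B,f_!S,f,\id)$ is an initial functor --- the same reduction in different clothing.
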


\begin{proof}
$G'$ has a codensity monad if for each $(A, S) \in \elt{\Sigma}$, the limit
\[
T^{G'}(A, S) 
=
\lt{\substack{(B, R) \in \elt{\Sigma\of G},\\(f, \phi)\from (A, S) \to (GB, R)}}
(GB, R)
\]
in $\elt{\Sigma}$ exists.  This is a limit over the category of elements of 
\[
\elt{\Sigma}\bigl((A, S), G'\dashbk\bigr)\from
\elt{\Sigma\of G} \to \Set.
\]
An object of this category of elements consists of an object $B$ of
$\cat{B}$, an object $R$ of $\Sigma(GB)$, a map $f\from A \to GB$ in
$\cat{A}$, and a map $\phi\from S \to f^* R$ in $\Sigma(A)$ (or
equivalently, a map $\bar{\phi}\from f_! S \to R$ in $\Sigma(GB)$).  It
follows that
\[
T^{G'}(A, S)
=
\lt{\substack{B \in \cat{B},\\ f \from A \to GB\ }} 
\lt{\substack{R \in \Sigma(GB),\\ \bar{\phi} \from f_! S \to R}}
(GB, R)
\]
provided that the right-hand side exists.  Here the outer limit is over
the category of elements of $\cat{A}(A, G\dashbk)\from \cat{B} \to \Set$,
and the inner limit is over the coslice category $f_! S/\Sigma(GB)$.  But
the coslice category has an initial object, the identity on $f_! S$, so
\[
T^{G'}(A, S)
=
\lt{\substack{B \in \cat{B},\\ f \from A \to GB}} (GB, f_! S)
\]
provided that this limit in $\elt{\Sigma}$ exists.  By the completeness
hypotheses and Lemma~\ref{lemma:elts-lim}, it does exist, and by the
construction of limits given in the proof of that lemma,
\[
T^{G'}(A, S)
=
\Biggl(
T^G(A), \lt{\substack{B \in \cat{B},\\ f \from A \to GB}} \pr_f^* f_! S
\Biggr),
\]
as required.
\done
\end{proof}

The proposition describes only the underlying functor of the codensity
monad $\mnd{T}^{G'}$.  The component of the unit at an object $(A, S)$ of
$\elt{\Sigma}$ consists first of a map $A \to T^G(A)$, which is just the
unit map $\eta^G_A$ of the codensity monad of $G$, and then of a map
\[
i_S \from
S \to 
\bigl(\eta^G_A\bigr)^* 
\left(
\lt{\substack{B \in \cat{B},\\ f\from A \to GB}} \pr_f^* f_! S
\right).
\]
Since $\bigl(\eta^G_A\bigr)^*$ has a left adjoint, the codomain of $i_S$ is
\[
\lt{B, f} \bigl(\eta^G_A\bigr)^* \pr_f^* f_! S
=
\lt{B, f} \bigl(\pr_f \of \eta^G_A\bigr)^* f_! S
=
\lt{B, f} f^* f_! S.
\]
Under these isomorphisms, 
the $(B, f)$-component of $i_S$ is the unit map $S \to f^* f_! S$.

We will need a variant of Proposition~\ref{propn:elts-coden}.

\begin{propn}   \label{propn:elts-coden-filt}
Proposition~\ref{propn:elts-coden} holds under the following alternative
hypotheses: $\cat{B}$ is now required to have finite limits and $G$ to
preserve them, but for each $A \in \cat{A}$, the category $\Sigma(A)$ is
only required to have \emph{cofiltered} limits.
\end{propn}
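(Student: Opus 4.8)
The plan is to run the proof of Proposition~\ref{propn:elts-coden} essentially unchanged, isolating the one place where completeness of the fibres $\Sigma(A)$ was used and replacing it by a cofilteredness argument. First note that $\mnd{T}^G$ itself still exists, since that needs only $\cat{B}$ essentially small and $\cat{A}$ complete, and those hypotheses are retained. Now fix $(A, S) \in \elt{\Sigma}$. Exactly as before, the putative value $T^{G'}(A, S)$ is a limit in $\elt{\Sigma}$ over the category of elements of $\elt{\Sigma}\bigl((A, S), G'\dashbk\bigr)$; transposing each $\phi\from S \to f^*R$ to $\bar{\phi}\from f_! S \to R$ (which uses only the left adjoints $f_!$, still available), splitting the limit into an outer and an inner one, and noting that each inner coslice $f_! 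S/\Sigma(GB)$ has the initial object $\mathrm{id}_{f_! S}$, one reduces \emph{formally} — without invoking any (co)completeness of the $\Sigma(A)$ — to showing that the limit
\[
\lt{\substack{B \in \cat{B},\\ f \from A \to GB}} (GB, f_! S)
\]
exists in $\elt{\Sigma}$, the shape of this limit being the category $\scat{I}$ of elements of $\cat{A}(A, G\dashbk)\from \cat{B} \to \Set$.

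The new ingredient is that $\scat{I}$ is cofiltered. Indeed, $\cat{A}(A, G\dashbk)$ is the composite of $G$, which preserves finite limits by the new hypothesis, with $\cat{A}(A, \dashbk)$, which preserves all limits; hence it preserves finite limits, and since $\cat{B}$ now has finite limits, a finite-limit-preserving functor $\cat{B} \to \Set$ is flat, so its category of elements is cofiltered.

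It now suffices to apply Lemma~\ref{lemma:elts-lim} with index category $\scat{I}$. Its hypotheses hold for this $\scat{I}$: $\cat{A}$ has limits over $\scat{I}$ because it is complete; each $\Sigma(A')$ has limits over $\scat{I}$ because $\scat{I}$ is cofiltered and, by hypothesis, $\Sigma(A')$ has all cofiltered limits; and each reindexing functor $f^*$ preserves limits over $\scat{I}$ because it is a right adjoint (with left adjoint $f_!$) and so preserves all limits. The lemma then supplies the limit displayed above, and the construction in its proof identifies the two components as $T^G(A) = \lt{B, f} GB$ and $\lt{B, f} \pr_f^* f_! S$ (the latter formed in $\Sigma(T^G(A))$), which is exactly the asserted formula. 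The description of the unit of $\mnd{T}^{G'}$ recorded after Proposition~\ref{propn:elts-coden} goes through verbatim, using only the adjunctions $f_! \dashv f^*$.

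The step requiring most care is this last verification of Lemma~\ref{lemma:elts-lim}'s hypotheses for the specific cofiltered category $\scat{I}$: one must observe that $f^*$ still preserves the relevant limits (immediate, being a right adjoint), and that the hypothesis ``$\Sigma(A)$ has cofiltered limits'' is being invoked for the fibre $\Sigma(T^G(A))$ over the \emph{new} apex object $T^G(A)$, not merely for the fibres $\Sigma(GB)$. Granting these points, the proof is a transcription of that of Proposition~\ref{propn:elts-coden}.
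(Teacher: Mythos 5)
Your proof is correct and is essentially the paper's own argument: the key point in both is that, since $\cat{B}$ has finite limits and $G$ (hence $\cat{A}(A,G\dashbk)$) preserves them, the indexing category of elements is cofiltered, after which the proof of Proposition~\ref{propn:elts-coden} is repeated verbatim, with cofiltered limits in the fibres sufficing where Lemma~\ref{lemma:elts-lim} is invoked. Your extra remarks (flatness, and that the hypothesis is applied to the fibre over $T^G(A)$) merely spell out details the paper leaves implicit.
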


\begin{proof}
Since $\cat{B}$ has finite limits and $G$ preserves them, the category of
elements of $\cat{A}(A, G\dashbk)\from \cat{B} \to \Set$ is cofiltered for
each $A \in \cat{A}$.   The proof is now identical to that of
Proposition~\ref{propn:elts-coden}.
\done
\end{proof}

The case $\cat{E} = \Set$ of the following theorem is due to the referee
(who gave a different proof).

\begin{thm}     \label{thm:ulp}
The inclusion $\FinFam(\cat{E}) \incl \Fam(\cat{E})$ has a codensity monad,
isomorphic to the ultraproduct monad on $\Fam(\cat{E})$.
\end{thm}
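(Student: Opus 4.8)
The plan is to recognize the inclusion $\FinFam(\cat E) \incl \Fam(\cat E)$ as the functor $G'$ obtained from the functor $\Sigma \from \Set^\op \to \CAT$ of~\eqref{eq:slice} and the inclusion $G \from \FinSet \incl \Set$ via the category-of-elements construction, and then to apply Proposition~\ref{propn:elts-coden-filt}. First I would check its hypotheses: $\Set$ is complete and $\FinSet$ is essentially small; $\FinSet$ has finite limits and $G$ preserves them; for each set $X$ the category $\Sigma(X) = (\cat E^X)^\op$ has cofiltered limits, since $\cat E^X$ has filtered colimits formed pointwise from those of $\cat E$; and for each map $f \from X \to Y$ of sets, the functor $\Sigma(f)$ has a left adjoint, namely the opposite of the dependent-product functor $(R_x)_{x \in X} \mapsto \bigl( \prod_{x \in f^{-1}(y)} R_x \bigr)_{y \in Y}$, which is right adjoint to reindexing along $f$ and exists since $\cat E$ has small products. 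Proposition~\ref{propn:elts-coden-filt} then gives both that $G'$ has a codensity monad $\mnd T^{G'}$ and a formula for its underlying functor:
\[
T^{G'}\bigl( (S_x)_{x \in X} \bigr)
=
\Bigl( T^G(X),\ \lt{B \in \FinSet,\ f \from X \to B} \pr_f^* f_!(S) \Bigr).
\]

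Next I would identify this with the ultraproduct functor $V$. By the theorem of Kennison and Gildenhuys, $T^G$ is the ultrafilter functor $U$, and under this identification the projection $\pr_f \from U(X) \to B$ carries an ultrafilter $\uf U$ to $\int_X f \dee\uf U$, while $f_!(S)$ has $b$-component $\prod_{x \in f^{-1}(b)} S_x$. Since the category of elements of $\Set(X, \dashbk) \from \FinSet \to \Set$ is cofiltered and limits in $(\cat E^{U(X)})^\op$ are formed pointwise, the $\uf U$-component of $\lt{B, f} \pr_f^* f_!(S)$ is the filtered colimit $\colt{B \in \FinSet,\ f \from X \to B} \prod_{x \in f^{-1}(\int_X f \dee\uf U)} S_x$, which Lemma~\ref{lemma:ulp-int} (with $\cat B = \FinSet$) identifies canonically with $\ulp{\uf U} S$. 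Thus $T^{G'}$ agrees on objects with $V$, as in~\eqref{eq:up-objs}. To upgrade this to a natural isomorphism of functors I would unwind the two functorial actions: a morphism $(f, \phi)$ of $\Fam(\cat E)$ acts on $T^{G'}$ through the limit construction underlying Proposition~\ref{propn:elts-coden}, and on $V$ through the map~\eqref{eq:up-func-map}; reconciling them is a diagram chase showing that the coprojections of Lemma~\ref{lemma:ulp-int} intertwine these actions, using the cocone~\eqref{eq:up-cocone}.

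Finally I would promote the natural isomorphism $V \iso T^{G'}$ to an isomorphism of monads. Since $G'$ is the inclusion of a full subcategory --- hence full and faithful --- and has a codensity monad, Proposition~\ref{propn:mult-immutable}(\ref{part:immutable-iso}) reduces this to checking that the isomorphism respects units, so the multiplication of $\mnd V$ need never be computed. Both units have first component the unit $\eta_X \from X \to U(X)$ of the ultrafilter monad, which is exactly $\eta^G_X$ as read off from Proposition~\ref{propn:elts-coden-filt}; and the unit of $\mnd T^{G'}$ described after Proposition~\ref{propn:elts-coden} has $x$-component, on each $(B, f)$-summand, the projection $\prod_{x' \in f^{-1}(f(x))} S_{x'} \to S_x$, that is, the canonical isomorphism $\ulp{\eta_X(x)} S \to S_x$ --- precisely the second component of the unit of $\mnd V$.

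The step I expect to be the main obstacle is the naturality check in the second paragraph: matching the functorial action of $\mnd T^{G'}$ on morphisms of $\Fam(\cat E)$ against the somewhat intricate formula~\eqref{eq:up-func-map} for $V$ on morphisms, while keeping the indexing conventions of Lemma~\ref{lemma:ulp-int} and Proposition~\ref{propn:elts-coden} aligned. A route that partly sidesteps it: once $\mnd T^{G'}$ is known to exist, note that $\mnd V$ restricts to the identity on $\FinFam(\cat E)$ (ultrafilters on a finite set are principal, and an ultraproduct along a principal ultrafilter is canonically the corresponding factor), so Proposition~\ref{propn:restricts-terminal} furnishes a unique monad map $\mnd V \to \mnd T^{G'}$, and it then remains only to see this map is a pointwise isomorphism --- again the content of Lemma~\ref{lemma:ulp-int}.
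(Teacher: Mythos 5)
Your proposal is correct and follows essentially the same route as the paper: verify the hypotheses of Proposition~\ref{propn:elts-coden-filt} for $\Sigma$ as in~\eqref{eq:slice}, identify the resulting formula with the ultraproduct functor via Theorem~\ref{thm:main} and Lemma~\ref{lemma:ulp-int}, and then match units so that Proposition~\ref{propn:mult-immutable} yields the monad isomorphism. The functoriality-in-morphisms check you flag as the main obstacle is exactly the point the paper passes over silently, and your fallback via Proposition~\ref{propn:restricts-terminal} is a legitimate way to finesse it.
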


\begin{proof}
We apply Proposition~\ref{propn:elts-coden-filt} when $G$ is the inclusion
$\FinSet \incl \Set$ and $\Sigma$ is the functor of~\eqref{eq:slice}.
First we verify the hypotheses of that proposition, using our standing
assumption that $\cat{E}$ is a category with small products and filtered
colimits.  For each map of sets $f\from X \to X'$, the functor $f^*\from
\cat{E}^{X'} \to \cat{E}^X$ has a right adjoint $f_*$, given at $S \in
\cat{E}^X$ by
\[
f_*(S) = 
\Biggl(
\prod_{x \in f^{-1}(x')} S_x
\Biggr)_{x' \in X'}.
\]
Hence $f^* \from \Sigma(X') \to \Sigma(X)$ has a \emph{left} adjoint.  The
other hypotheses are immediate.  

By Proposition~\ref{propn:elts-coden-filt}, the inclusion $G'\from
\FinFam(\cat{E}) \incl \Fam(\cat{E})$ has a codensity monad, given at
$S \in \cat{E}^X$ by
\[
T^{G'}(S)
=
\colt{\substack{B \in \FinSet,\\ f\from X \to B}} \pr_f^* f_*(S)
\]
where the colimit is taken in $\cat{E}^{T^G(X)} = \Sigma\bigl(T^G
X\bigr)^\op$.  By Theorem~\ref{thm:main}, $T^G$ is the ultrafilter monad.
For $B \in \FinSet$ and $f\from X \to B$, the projection
\[
\pr_f\from T^G(X) = U(X) \to B
\]
is $\uf{U} \mapsto \int_X f \dee\uf{U}$; hence
\[
\pr_f^* f_* S = 
\Biggl( 
\prod_{x \in f^{-1}(\int f \dee\uf{U})} S_x 
\Biggr)_{\uf{U} \in U(X)}.
\]
So by Lemma~\ref{lemma:ulp-int}, $T^{G'}(S)$ is canonically isomorphic to
$\bigl(\ulp{\uf{U}} S\bigr)_{\uf{U} \in U(X)} = V(S)$.  

This shows that the codensity monad $\mnd{T}^{G'}$ of
$\FinFam(\cat{E}) \incl \Fam(\cat{E})$ exists and has the same
underlying functor as the ultraproduct monad $\mnd{V}$.  Using the
description of the unit of $\mnd{T}^{G'}$ given after
Proposition~\ref{propn:elts-coden}, one can check that their units also
agree.  It follows from Proposition~\ref{propn:mult-immutable} that the
monads $\mnd{T}^{G'}$ and $\mnd{V}$ are isomorphic.
\done
\end{proof}

\begin{examples}
\begin{enumerate}
\item
The codensity monad of $\FinFam(\Set) \incl \Fam(\Set)$ is the
ultraproduct monad on $\Fam(\Set)$.  

\item
The same is true when $\Set$ is replaced by $\Ring$, $\Grp$, or the
category $\cat{E}$ of algebras for any other finitary algebraic theory.  In
such categories, small products and filtered colimits are computed as in
$\Set$, so ultraproducts are computed as in $\Set$ too.

\item
Take the category $\cat{E}$ of structures and homomorphisms for a
(finitary) signature, in the sense of model theory.  This has products and
filtered colimits, both computed as in $\Set$, and the codensity monad of
$\FinFam(\cat{E}) \incl \Fam(\cat{E})$ is the ultraproduct construction for
such structures.  It remains to be seen whether {\L}o\'s's theorem (the
fundamental theorem on ultraproducts) can usefully be understood in this
way.
\end{enumerate}
\end{examples}

There is an alternative version of Theorem~\ref{thm:ulp}.  Let $\cat{B}$ be
a full subcategory of $\FinSet$ containing at least one set with at least
three elements, and write $\Fam_\cat{B}(\cat{E})$ for the full subcategory
of $\Fam(\cat{E})$ consisting of the families $(S_x)_{x \in X}$ with $X \in
\cat{B}$.  Assume that $\cat{E}$ has all small colimits (not just
filtered colimits).  Then the codensity monad of the inclusion
$\Fam_\cat{B}(\cat{E}) \incl \Fam(\cat{E})$ is the ultraproduct monad.   The
proof is the same as that of Theorem~\ref{thm:ulp}, but replacing $\FinSet$
by $\cat{B}$ and Proposition~\ref{propn:elts-coden-filt} by
Proposition~\ref{propn:elts-coden}.

We finish by describing the algebras for the ultraproduct monad,
restricting our attention to $\cat{E} = \Set$.

Let $\Sheaf$ be the category in which an object is a topological space $X$
equipped with a sheaf $S$ of sets, and a map $(X, S) \to (Y, R)$ is a
continuous map $f\from X \to Y$ together with a map $f^* R \to S$ of
sheaves on $X$.  It has a full subcategory consisting of the objects $(X,
S)$ where $X$ is discrete; this is nothing but $\Fam(\Set)$.  It also
has a full subcategory $\CHSheaf$ consisting of the objects $(X, S)$ for
which $X$ is compact and Hausdorff.

The following corollary was also pointed out by the referee.

\begin{cor}     \label{cor:chsheaf}
The category of algebras for the codensity monad of $\FinFam(\Set) \incl
\Fam(\Set)$ is equivalent to $\CHSheaf$, the category of sheaves on compact
Hausdorff spaces.
\end{cor}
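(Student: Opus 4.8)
By Theorem~\ref{thm:ulp}, the codensity monad in question is the ultraproduct monad $\mnd{V}$ on $\Fam(\Set)$, so it suffices to prove that $\Fam(\Set)^{\mnd{V}} \eqv \CHSheaf$. Recall that $\Fam(\Set)$ sits inside $\Sheaf$ as the full subcategory on the objects $(X,S)$ with $X$ discrete, while $\CHSheaf$ is the full subcategory on the objects with $X$ compact Hausdorff. The plan is to exhibit a forgetful functor $W \from \CHSheaf \to \Fam(\Set)$ that is monadic with associated monad $\mnd{V}$ --- in parallel with the treatment of $\LCVect$ in Theorem~\ref{thm:lcvect}. Define $W$ by sending $(Y,R)$ to the underlying set of $Y$ equipped with the family of stalks of $R$ (``discretification''); on a morphism $(f, f^* R \to S)$ it acts by the map of sets underlying $f$ together with the induced stalkwise maps. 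Define $D \from \Fam(\Set) \to \CHSheaf$ by $D(X,S) = (\beta X,\ (\eta_X)_* S)$, the pushforward of the sheaf $S$ along the Stone--\v{C}ech embedding $\eta_X \from X \incl \beta X = U(X)$; this lands in $\CHSheaf$ since a pushforward is a sheaf and $\beta X$ is compact Hausdorff.

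First I would establish $D \ladj W$: a morphism $D(X,S) \to (Y,R)$ consists of a continuous map $g \from \beta X \to Y$ together with a map $g^* R \to (\eta_X)_* S$ of sheaves on $\beta X$; the first datum transposes across the universal property of the Stone--\v{C}ech compactification (using that $Y$ is compact Hausdorff), and the second across the sheaf adjunction $\eta_X^* \ladj (\eta_X)_*$, yielding naturally and bijectively a morphism $(X,S) \to W(Y,R)$. The monad on $\Fam(\Set)$ induced by this adjunction has underlying functor $W \of D$; since the stalk of $(\eta_X)_* S$ at an ultrafilter $\uf{U}$ is the ultraproduct $\ulp{\uf{U}} S$ (as recalled in the discussion of ultraproducts above), $W \of D$ agrees with $V$ on objects, and a routine check extends this to morphisms. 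The unit of the adjunction at $(X,S)$ is, componentwise, $\eta_X$ together with the canonical isomorphisms $\ulp{\eta_X(x)} S \iso S_x$ --- precisely the unit of the ultraproduct monad. Since $\mnd{V}$ is the codensity monad of the full and faithful inclusion $\FinFam(\Set) \incl \Fam(\Set)$ (Theorem~\ref{thm:ulp}), Proposition~\ref{propn:mult-immutable} then identifies the induced monad with $\mnd{V}$. Hence there is a comparison functor $K \from \CHSheaf \to \Fam(\Set)^{\mnd{V}}$, and the theorem reduces to monadicity of $W$.

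For monadicity I would apply Beck's theorem. That $W$ reflects isomorphisms is immediate: a morphism sent by $W$ to an isomorphism is a continuous bijection of compact Hausdorff spaces --- hence a homeomorphism --- covered by a stalkwise isomorphism of sheaves, hence itself an isomorphism. The substantive hypothesis is that $\CHSheaf$ has, and $W$ preserves, coequalizers of $W$-split pairs. Since $W$-split coequalizer diagrams are absolute, it is enough to build such a coequalizer ``over the base'': form the coequalizer of the underlying compact Hausdorff spaces (which exists because $\CptHff$ is monadic over $\Set$ by Manes~\cite{ManeTTC}) and descend the sheaf datum along the quotient using the splitting. Making this descent precise --- equivalently, controlling coequalizers in $\CHSheaf$ relative to the projection to $\CptHff$, or, conceptually, showing that the fibre of $\Fam(\Set)^{\mnd{V}} \to \CptHff$ over a compact Hausdorff space $X$ (the category of $\mnd{V}$-algebra structures lying over the $\mnd{U}$-algebra $X$) is equivalent to the category of sheaves on $X$, compatibly with reindexing --- is the step I expect to be the main obstacle; it amounts to reconstructing a sheaf on a compact Hausdorff space from the ultrafilter-indexed gluing data carried by a $\mnd{V}$-algebra, and one could instead carry it out by hand to produce an explicit quasi-inverse to $K$. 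Granting this, $W$ is monadic, $K$ is an equivalence, and $\CHSheaf \eqv \Fam(\Set)^{\mnd{V}}$, which is the category of algebras for the codensity monad of $\FinFam(\Set) \incl \Fam(\Set)$, as claimed.
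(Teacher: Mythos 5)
Your route is genuinely different from the paper's: the paper disposes of this corollary in one line, citing Theorem~1.4 of Kennison~\cite{Kenn} (which states precisely that $\CHSheaf$ is the category of algebras for the ultraproduct monad) and combining it with Theorem~\ref{thm:ulp}. You instead set out to reprove Kennison's theorem. The first half of your argument is sound: the adjunction $D \ladj W$ (pushforward along the Stone--\v{C}ech embedding $\eta_X \from X \to U(X)$ versus taking underlying set and stalks) is correctly set up, the identification of the stalk of $(\eta_X)_* S$ at $\uf{U}$ with $\ulp{\uf{U}} S$ is exactly the sheaf-theoretic description of ultraproducts recalled in Section~\ref{sec:ups}, and matching units and then invoking Proposition~\ref{propn:mult-immutable} does legitimately identify the adjunction-induced monad with $\mnd{V}$, hence (by Theorem~\ref{thm:ulp}) with the codensity monad.

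The genuine gap is at the decisive step. What you have established only yields the comparison functor $K \from \CHSheaf \to \Fam(\Set)^{\mnd{V}}$; the corollary asserts that $K$ is an equivalence, i.e.\ that $W$ is monadic. Of Beck's hypotheses you verify only the easy one (reflection of isomorphisms) and explicitly defer the substantive one --- that $\CHSheaf$ has, and $W$ preserves, coequalizers of $W$-split pairs, equivalently that a sheaf on a compact Hausdorff space can be reconstructed from the ultrafilter-indexed data carried by a $\mnd{V}$-algebra --- with the phrase ``Granting this''. That deferred step is the entire content of Kennison's Theorem~1.4, and nothing proved earlier in the paper supplies it: Theorem~\ref{thm:ulp} concerns only the monad, not its algebras, and the argument for Theorem~\ref{thm:lcvect} rested on a separate duality ($\Vect^\op \eqv \LCVect$) with no analogue invoked here. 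As written, the proposal proves strictly less than the statement; to complete it you would need either to carry out the descent of the sheaf datum along the split coequalizer in detail (gluing sections over the quotient compact Hausdorff space and checking the sheaf condition), or simply to cite Kennison as the paper does.
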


\begin{proof}
Theorem~1.4 of Kennison~\cite{Kenn} states that $\CHSheaf$ is the category
of algebras for the ultraproduct monad.  The result follows from
Theorem~\ref{thm:ulp}.
\done
\end{proof}

So the notion of finiteness of a family of sets leads inevitably not
only to the notion of ultraproduct, but also to the notion of sheaf on a
compact Hausdorff space.

\paragraph*{Acknowledgements} I thank Ken Brown, Eugenia Cheng, Alastair
Craw, Michel H\'ebert, Thomas Holder, Peter Johnstone, Anders Kock, Uli
Kr\"ahmer, Kobi Kremnitzer, Steve Lack, Fred Linton, Mark Meckes, Tim
Porter, Emily Riehl, Ben Steinberg, Ross Street, Myles Tierney and Boris
Zilber.  I am especially grateful to Todd Trimble for encouragement, for
pointing out a result of Lawvere (Corollary~\ref{cor:lawvere} above), and
for showing me a proof~\cite{TrimMO} that the algebras for the double
dualization monad are the linearly compact vector spaces
(Theorem~\ref{thm:lcvect}).  Finally, I am enormously grateful to the
anonymous referee, who very generously gave me the main theorem of
Section~\ref{sec:ups}.

\ucontents{section}{References}%

\end{document}